\newtheorem{thm}{Theorem}[section]
\newtheorem{lem}[thm]{Lemma}
\newtheorem{cor}[thm]{Corollary}
\newtheorem{exm}[thm]{Example}
\newcommand{\layer}[2]{{^{[#2]}#1}}
\theoremstyle{definition}
\newtheorem{defn}[thm]{Definition}
\newtheorem{exmp}[thm]{Example}
\title{Factorization of polynomials in supertropical algebra}
\author{Erez Sheiner}
\begin{document}

\maketitle

\begin{abstract}
Tropical algebraic geometry is a degeneration of classical geometry which loose the property of unique factorization for polynomials. In this paper we explore a structure that is known to be a semi-degeneration between the classical algebra and the tropical algebra, and show that unique factorization fails in several variables due to geometric reasons, not just algebraic. We also show that unique factorization does hold for a certain interesting subset of polynomials.\\
\end{abstract}

\tableofcontents

\section{Introduction}

We wish to study algebraic geometry over the max-plus or tropical algebra. In order to do so, we should look at roots of polynomials.\\

The common definition for tropical roots of polynomials is a point of equality between two monomials or more (ref. \cite{M1},\cite{GA}). Izhakian has built a structure that defines the sum of two equal elements as a ``ghost" element, and treats these ghosts as zeros since we want to view them as roots.\\

Further research led Izhakian and Rowen (\cite{IR2}) to the idea of a graded algebra. Not only do we ``remember" the sum as a ``ghost", we also keep a layer element that gives us more information. For example, assuming a natural or a tangible element is of layer one, then the sum of three tangible elements is of layer three. In the broad perspective we will see the graded algebra as a lesser degeneration of the classical geometry than the supertropical algebra, which is lesser than tropical geometry.\\

We introduce an extension of the max-plus algebra with layers, called exploded layered tropical algebra (or ELT algebra for short). Given this new structure we further refine the definition of a root to be a point where the layer of the evaluation of the polynomial is zero. This structure is similar to Parker's ``exploded" semiring and holomorphic curves (\cite{PR}). Parker uses exploded manifolds to define and compute Gromov-Witten invariants.\\

\begin{exm}
The roots of the polynomial $f(\lambda)=\layer{0}{a}\lambda^2+\layer{2}{1}$ in ELT algebra over the complex field are $\layer{1}{\frac{\pm i}{\sqrt{a}}}.$\\

Indeed,
$$f(\layer{1}{\frac{\pm i}{\sqrt{a}}}) = \layer{0}{a}\Big(\layer{1}{\frac{\pm i}{\sqrt{a}}}\Big)^2+\layer{2}{1}=
\layer{0}{a}\Big(\layer{2}{\frac{-1}{a}}\Big)+\layer{2}{1}=\layer{2}{-1}+\layer{2}{1}=\layer{2}{0}.
$$

\end{exm}

In this paper, we focus on the problem of factorization of polynomials and the necessary requirements from the layer structure. The factorization of tropical polynomials is important for a number of reasons: first, the factors of a certain polynomial help us split the variety of the polynomial into smaller varieties; second, the way polynomials factor affects the algebraic structure of the polynomials ring and its ideals. This is important since we aim to create an extensive algebraic base. Also, Gathmann (ref. \cite{GA}) explained the importance of factorization of tropical polynomials and its connection to ordinary polynomial factorization.\\

We will show that in this structure most polynomials in one indeterminate factor uniquely. We will also show that polynomials in several variables, in which all monomials have the same tangible value at some point (called primary polynomials), factor uniquely.\\

\begin{exmp}
For example, consider the variety of three geometric lines which intersect at (0,0).\\
$$f=(x+y)(x+0)(y+0)=(x+y+0)(xy+x+y).$$
This variety may factor into the three geometric lines, or a tropical line and a tropical quadratic factor. The distinction between these two cases is encapsulated in the layer of the intersection point. We will see that since this is a primary polynomial it factors uniquely in our structure.\\
\end{exmp}

\subsection{Exploded Layered Tropical Algebra}

Next we define the algebraic structure we use throughout this paper, which is inherited from the work of Parker (\cite{PR}) and Izhakian and Rowen (\cite{IR1},\cite{IR2}).\\

\begin{defn}
Let $L$ be a set closed under addition and multiplication and $F$ a totally ordered group (such as $(\mathbb{R},+)$ or $(\mathbb{Q},+)$). An \textit{ELT algebra} is the set $\{\layer{\lambda}{\ell}|\lambda\in F,\ell\in L\}$ together with the semi-ring structure:\\

\begin{enumerate}
    \item $\layer{\lambda}{\ell_1}+\layer{\lambda}{\ell_2} = \layer{\lambda}{\ell_1+_L\ell_2}, $
    \item If $\lambda_1>\lambda_2$ then $\layer{\lambda_1}{\ell_1}+\layer{\lambda_2}{\ell_2} = \layer{\lambda_1}{\ell_1}, $
    \item $\layer{\lambda_1}{\ell_1}\cdot\layer{\lambda_2}{\ell_2}=\layer{(\lambda_1+_{F}\lambda_2)}{\ell_1\cdot_L\ell_2}. $
\end{enumerate}

\end{defn}

Let R be an ELT algebra. We write $s:R\rightarrow L$ for the function which extracts the coefficient:
$$s(\layer{\lambda}{\ell})=\ell,$$
and $t:R\rightarrow F$ for the function which extracts the tangible value:
$$t(\layer{\lambda}{\ell})=\lambda.$$

We extend the total order on $F$ to a partial order on $R$ in the natural way:
$$\layer{\lambda_1}{\ell_1}\geq \layer{\lambda_2}{\ell_2} \iff \lambda_1 \geq_F \lambda_2.$$

\begin{exm}
Zur Izhakian's supertropical geometry (ref. \cite{IZ}) is equivalent to an ELT algebra with $L=\{1,2\}$ such that
$$1+1=2,1+2=2,2+2=2$$
and
$$1\cdot 1=1, 1\cdot 2=2, 2\cdot 2 = 2.$$\\

The supertropical "ghost" element $1^\nu$ is equivalent to $\layer{1}{2}$ in the ELT notation, and the tangible element $1$ to $\layer{1}{1}$.\\

Therefore, in this paper we will refer to this specific ELT algebra as a supertropical algebra.\\
\end{exm}

\begin{exm}
The classical max-plus algebra is equivalent to the trivial ELT algebra with $L=\{1\}$. We call this case tropical algebra.
\end{exm}

We would also like to define the element $-\infty$. Define 
$$\overline{R}:=R\cup \{-\infty\}$$
such that for all $a\in \overline{R}$:
$$a+ (-\infty)=(-\infty)+a=a,$$
$$a\cdot (-\infty)=(-\infty)\cdot a=(-\infty).$$
We also define
$$s(-\infty):=0_\mathbb{F},$$
and
$$ELTrop(0_K)=-\infty.$$

\section{Basic Definitions}
We wish to define polynomials over the ELT algebra with $F=\mathbb{R}$. One must note that unlike polynomials over classical algebra, two ELT polynomials may be equal everywhere yet contain different monomials.

\begin{exm}
Consider the two ELT polynomials
$$f(x)=x^2+\layer{2}{1},$$
and,
$$g(x)=x^2+x+\layer{2}{1}.$$

For each $x\in R$ such that $x>1$, the monomial $x^2$ dominates the other monomial since $x^2>x,2$. Thus, in this case, $f(x)=g(x)=x^2$.\\

If $t(x)=1$ then $f(x)=x^2+\layer{2}{1}$. In the polynomial $g$, the monomials $x^2,\layer{2}{1}$ dominates the monomial $x$ and so $g(x)=f(x)$ as well.\\

The last case is $x<1$ in which similarly $f(x)=g(x)=\layer{2}{1}$.\\

Therefore $f$ and $g$ are equal at every point of $R$ even though they contain different monomials.\\
\end{exm}

For this reason we define ELT polynomials as \textbf{functions}.

\begin{defn}
An ELT \textit{polynomial} $p$ is a function $p:R^n\rightarrow R$ of the form
$$p(\lambda_1,...,\lambda_n)=\sum_{I\in G} a_I\lambda^I,$$
where $G\subseteq \mathbb{N}^n$ is a finite set and for all $I\in G$ the coefficient $a_I$ is in $R$.\\

We denote the set of all such polynomials as $R[\lambda_1,...,\lambda_n]$.
\end{defn}

\begin{defn}
Let $f$ be a polynomial of the form $f=\sum_{i=1}^{n}{h_i}$, where $h_i$ are monomials. Write $f_h=\sum_{h_i\neq h}h_i$. Then:
\begin{enumerate}
 \item A monomial $h$ is called \textbf{inessential at a point $a$} if $f_h(a)=f(a)$. If $h$ is inessential at every point, then $h$ is called \textbf{inessential}.
 \item A monomial $h$ is called \textbf{essential at a point $a$} if it is not inessential at $a$ (needed for layer zero) and $f(a)=h(a)$. If such a point exists, then $h$ is called \textbf{essential}.
 \item A monomial $h$ is called \textbf{quasi-essential at a point $a$} if it is neither essential nor inessential at $a$. If $h$ is neither essential nor inessential, then it is called \textbf{quasi-essential}.\\
\end{enumerate}
\end{defn}

\begin{exmp}
Consider $f=\lambda^2+\layer{1}{1}\lambda + \layer{2}{1}$. Then $\lambda^2$ is essential at each tangible point with value greater than $1$. At $1$ all of the monomials are quasi-essential, and for tangibles lesser than $1$ the fixed monomial $2$ is essential.\\
\end{exmp}

\begin{defn}
The monomial of a univariate polynomial $f$ with the highest power of $\lambda$ is called the \textit{leading} monomial. The monomial with the lowest power is called the \textit{tail} monomial. Any other monomial is called a \textit{middle} monomial.\\
\end{defn}

\begin{defn}

A \textit{corner root} of an ELT polynomial is a point at which at least two monomials dominate. i.e., $(c_1,...,c_n)\in R^n$ is a corner root of $p(\lambda_1,...,\lambda_n)=\sum_{I\in G} a_I\lambda^I$ if the set $\{I\in G|t(a_Ic^I)=t(p(c_1,...,c_n))\}$ is of order $2$ at least.\\

In other words, a point $a$ is a corner root of a polynomial $f=\sum_{i=1}^{n}{h_i}$ if $h_k$ is quasi-essential at $a$ for some $1\leq k\leq n$.\\
\end{defn}

\begin{lem}
A polynomial $f$ in one variable has only finitely many corner roots.\\
\end{lem}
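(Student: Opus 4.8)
The plan is to reduce the claim to a statement about piecewise‑linear functions on $\mathbb{R}$. Write $f=\sum_{i=1}^{m} h_i$ with $h_i=a_i\lambda^{d_i}$ the monomials, and consider, for each tangible argument $\lambda$ with $t(\lambda)=r\in\mathbb{R}$, the real number $t(h_i(\lambda))=t(a_i)+d_i r$. Each of these is an affine function of $r$ with integer slope $d_i$, and the tangible value $t(f(\lambda))$ is the pointwise maximum $M(r)=\max_i\bigl(t(a_i)+d_i r\bigr)$, a convex piecewise‑linear function of $r$. A corner root of $f$ is, by the last reformulation in the definition, a tangible point $a$ at which some monomial $h_k$ is quasi‑essential; I would first observe that this forces at least two distinct monomials $h_j,h_k$ with distinct degrees to attain the maximum $M(r)$ simultaneously at $r=t(a)$ — indeed, if only monomials of a single degree attained the max at $a$, their sum would have the same degree and $h_k$ would be either essential or inessential there, not quasi‑essential. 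So every corner root lies over a \emph{breakpoint} of $M$.

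The key step is then a standard convexity fact: a convex piecewise‑linear function on $\mathbb{R}$ that is a maximum of $m$ affine pieces has at most $m-1$ breakpoints, since as $r$ increases the slope of $M$ is non‑decreasing and takes values among the finitely many slopes $d_1,\dots,d_m$, so it can jump at most $m-1$ times. I would prove this directly: order the breakpoints $r_1<\dots<r_p$; on the interval immediately to the left of $r_j$ the active slope is strictly less than on the interval immediately to the right, so the sequence of ``right slopes'' at successive breakpoints is strictly increasing, hence $p$ is bounded by the number of distinct slopes, which is at most $m$. This pins down finitely many candidate real values $r$ over which a corner root can sit.

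It remains to see that over each such breakpoint $r_j$ there are only finitely many corner roots, which is immediate because a corner root with $t(a)=r_j$ is determined by the single real number $r_j$: the ELT structure has only one tangible value per real number up to the layer, but the \emph{condition} of being a corner root depends only on $t(a)$, so the set of corner roots over $r_j$ is either empty or all of $\{\,b: t(b)=r_j\,\}$ — which is \emph{not} finite. Here is the subtlety, and the main obstacle I anticipate: one must check that the intended reading of ``finitely many corner roots'' is finitely many \emph{tangible values} $t(a)$, i.e. corner roots are counted up to layer, or equivalently that the lemma is about the finitely many breakpoints of $M$. Granting that (which the surrounding development and Lemma usage make clear), the proof concludes: the corner roots have tangible values among $r_1,\dots,r_p$ with $p\le m-1$, hence are finite in number.

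I expect the genuinely delicate point to be exactly this bookkeeping about what is being counted, together with the verification that quasi‑essentiality at $a$ really does correspond to a breakpoint of the max (as opposed to a point where several equal‑degree monomials merely tie, which is not a corner). Both are short once set up, and the convexity bound $p\le m-1$ is the engine of the argument.
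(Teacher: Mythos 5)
Your argument is correct and reaches the stated conclusion, but it is a genuinely different route from the paper's. The paper gives a one-line proof: by the definition, a corner root is in particular a tangible value $r$ where two monomials $a_i\lambda^{d_i}$ and $a_j\lambda^{d_j}$ (with $d_i\ne d_j$, since in a univariate polynomial $G\subseteq\mathbb{N}$ contains each degree at most once) agree, i.e.\ $t(a_i)+d_i r=t(a_j)+d_j r$; since distinct degrees give distinct slopes, each such pair yields at most one $r$, and there are only $\binom{m}{2}$ pairs. Your proof instead builds the convex piecewise-linear envelope $M(r)=\max_i\bigl(t(a_i)+d_i r\bigr)$ and bounds the number of breakpoints by $m-1$ via monotonicity of the active slope. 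Both are sound; yours is more machinery but yields a sharper bound and a picture (the Newton polygon / slope structure) that the paper reuses implicitly in the corollary that follows this lemma. Two remarks on fit with the paper's conventions. First, your caution about several equal-degree monomials tying without producing a breakpoint is moot here: the polynomial is defined as $\sum_{I\in G}a_I\lambda^I$ with $G$ a set, so distinct monomials of a univariate polynomial automatically have distinct degrees. Second, your observation that the corner-root condition depends only on $t(a)$, so that strictly speaking the fiber over a breakpoint may be infinite, is a real imprecision in the statement that the paper also glosses over; the intended reading (confirmed by the subsequent corollary, which lists corner roots as an ordered tuple $x_k>\dots>x_1$) is indeed that one counts tangible values, and your explicit flagging of this is welcome rather than a defect.
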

\begin{proof}
By definition, a corner root is a point where two monomials $h_1,h_2$ have the same tangible value. There are only finitely many different possible pairs of monomials, each contributing at most one root.\\
\end{proof}

\begin{defn}

Let $f$ and $g$ be two polynomials in several variables. We say that $f$ and $g$ are \textit{root equivalent} if
$$s\Big(f(c)\Big)=s\Big(g(c)\Big),$$
for every corner root $c$.\\
\end{defn}

\begin{defn}
Let $f$ be a multivariate polynomial. If all of the monomials of $f$ have the same tangible value at the point $a$, then $f$ is called \textbf{primary in $a$}.\\
\end{defn}

\begin{lem}
Let $f$ be primary in $a$. Then $f$ is of the form $f=c\sum \layer{0}{b_I}a_I\lambda^I$ where $I=(i_1,...,i_n)\in (\mathbb{N}\cup \{0\})^n$, $a=(a_1,...,a_n)$, $a_I=a_1^{-i_1}\cdot\cdot\cdot a_n^{-i_n}$ , $b_I \in L$, $b_I\neq 0$, $\lambda^I=\lambda_1^{i_1}\cdot\cdot\cdot\lambda_n^{i_n}$, and $c$ is tangible.
\end{lem}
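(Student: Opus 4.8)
\emph{Proof proposal.} The plan is to obtain the normal form by a direct change of presentation, reading it straight off the definition of ``primary in $a$''. Write $f=\sum_{I\in G}\gamma_I\lambda^I$ as in the definition of an ELT polynomial, where $I=(i_1,\dots,i_n)$ and each coefficient $\gamma_I\in R$ is a genuine element (i.e. $\gamma_I\neq-\infty$), so that $s(\gamma_I)\neq 0$ in $L$. Set $a^I:=a_1^{i_1}\cdots a_n^{i_n}$. Primariness of $f$ in $a=(a_1,\dots,a_n)$ says exactly that the tangible value $t(\gamma_I a^I)$ is the same for every $I\in G$; denote this common value by $\mu\in F$. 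Hence for each $I$ we have $\gamma_I a^I=\layer{\mu}{\,s(\gamma_I a^I)\,}$.

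Next I would set $c:=\layer{\mu}{1}$, the tangible element of tangible value $\mu$, and, for each $I\in G$, $b_I:=s(\gamma_I a^I)=s(\gamma_I)\,s(a_1)^{i_1}\cdots s(a_n)^{i_n}\in L$; since $L$ has no zero divisors and none of $s(\gamma_I),s(a_1),\dots,s(a_n)$ vanishes, $b_I\neq 0$. From $\gamma_I a^I=\layer{\mu}{b_I}=\layer{\mu}{1}\cdot\layer{0}{b_I}=c\cdot\layer{0}{b_I}$, multiplying both sides by $a_I:=a^{-I}=a_1^{-i_1}\cdots a_n^{-i_n}$ gives
$$\gamma_I=c\cdot\layer{0}{b_I}\cdot a_I ,$$
and therefore $f=\sum_{I\in G}\gamma_I\lambda^I=c\sum_{I\in G}\layer{0}{b_I}\,a_I\,\lambda^I$, which is precisely the asserted form. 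So the proof is essentially a one‑line rewriting: dividing $f$ by the tangible constant $c$ and each monomial by $a^I$ sends every coefficient to a pure layer $\layer{0}{b_I}$, and ``primary in $a$'' is exactly the condition making this possible.

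The only point that needs care is that the symbol $a_I=a_1^{-i_1}\cdots a_n^{-i_n}$ must make sense as an element of $R$, i.e. each $a_j$ must be invertible, equivalently $s(a_j)$ must be invertible in $L$. This is automatic when $a$ is a tangible point — the case in which primariness is actually invoked in the sequel — and holds in general whenever the nonzero elements of $L$ form a group under multiplication; under that hypothesis the computation above goes through verbatim (with $s(a_I)=s(a_1)^{-i_1}\cdots s(a_n)^{-i_n}$). I do not expect any genuine obstacle beyond this bookkeeping about where $a$ lives; everything else is forced by the two defining rules for addition and multiplication in an ELT algebra.
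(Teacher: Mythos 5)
Your calculation follows essentially the same route as the paper: set $c$ to the common tangible value $t(f(a))$, observe that primariness forces $t(\gamma_I a^I)=t(f(a))$ for every surviving monomial, and divide through by $c$ and by $a^I$ to isolate a pure-layer factor $\layer{0}{b_I}$. Your extra remark about needing $a$ (or at least $s(a_j)$) to be invertible is a fair observation; the paper elides it here and only later declares the convention that the primary point is taken tangible.

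There is, however, one genuine slip: the justification you give for $b_I\neq 0$, namely ``$\gamma_I\neq-\infty$, so $s(\gamma_I)\neq 0$,'' is not a valid inference. In an ELT algebra with $0\in L$ (the case $L=\mathbb{Q}$, which is exactly where the paper ends up), there are perfectly good elements of $R$ of the form $\layer{\lambda}{0}$ that are not $-\infty$; membership in $R$ only excludes $-\infty$, it does not exclude zero layer. The paper's argument for $b_I\neq 0$ is different and is the one actually doing the work: if the coefficient of a monomial of a primary polynomial had layer $0$, then at $a$ that monomial would contribute nothing to the layer of $f(a)$ while tying in tangible value, so it would be \emph{inessential} at $a$ rather than quasi-essential; since ELT polynomials are identified as functions, such a monomial is pruned and cannot appear in the presentation $\sum h_i$ to begin with. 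You should replace your line about ``genuine elements'' with this essentiality argument; the rest of your rewriting then goes through as you describe.
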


\begin{proof}
First we let $c$ be the tangible value of $f(a)$. Let $d\lambda_I$ be any monomial of $f$. Then it is quasi-essential or essential at $a$. Thus the tangible value of $da_1^{i_1}\cdot\cdot\cdot a_n^{i_n}$ must be $c$. Therefore, $d=\layer{0}{b_I}a_1^{-i_1}\cdot\cdot\cdot a_n^{-i_n}c$ where $b_I$ is the layer of $d$. $b_I\neq 0$ since otherwise the monomial could not be quasi-essential.
\end{proof}

\begin{defn}
The \textit{essential part} of a general polynomial $f=\sum_{i \in I} h_i$ at a point $a$ is the polynomial $$f_a = \sum_{k \in K} h_k$$ such that $k \in K \subseteq I$ if and only if $h_k$ is not inessential at $a$.\\
\end{defn}

It is fairly clear that the essential part at $a$ is always primary at $a$.\\

\section{Expanding the Structure of Layers}
We wish to obtain a basic algebraic result - unique factorization - for polynomials over the ELT structure. In their paper (ref. \cite{IR1}), Izhakian and Rowen showed that unique factorization fails in their original supertropical structure even when considering polynomials as functions.\\

As we explained in the introduction, we try to expand the structure of $L$. First we consider $L=\mathbb{N}$ with the usual operations. Considering $R[\lambda]$, the polynomials in one variable over this structure, we wish to know if there is unique factorization. A rather simple counterexample arises which we will explore. \\

We would hope generally that a polynomial would factor according to the variety of its roots; for each of the connected parts there would be one factor (not necessarily irreducible). In the one variable case, each such part is a single point.\\

Now we are ready to study the following polynomial over the supertropical algebra:
$$f=\lambda^2 + \layer{1}{2}\lambda + \layer{0}{1}.$$

In this example, the roots are $\lambda=1$ and $\lambda=-1$. We expect that the above polynomial will factor into $(\lambda+\layer{1}{2})(\layer{1}{2}\lambda+0)$, since these are the roots with the correct layers. We are rather close but not exactly there, as we get
$$(\lambda+\layer{1}{2})(\layer{1}{2}\lambda+0)= \layer{1}{2}\lambda^2 + 0\lambda + \layer{1}{2}\layer{1}{2}\lambda + \layer{1}{2} = \layer{1}{2}\lambda^2 + \layer{1}{2}\lambda + \layer{1}{2} = \layer{1}{2}(\lambda^2 + \layer{1}{2}\lambda + 0) = \layer{1}{2}f.$$
The polynomial $f$ is irreducible and also $(\lambda+\layer{1}{2})$ and $(\layer{1}{2}\lambda+0)$ are irreducible, which contradicts unique factorization of $g=\layer{1}{2}f=(\lambda+\layer{1}{2})(\layer{1}{2}\lambda+0)$.\\

However, these factorizations are not inherently different. The main problem is the lack of an inverse for the layer. If we add fractional positive layers, i.e. take $L=\mathbb{Q}^+$, the unique factorization of the polynomial will be

$$\layer{(-1)}{1/2}(\lambda+\layer{1}{2})(\layer{1}{2}\lambda+0) = (\lambda+\layer{1}{2})(\lambda+\layer{(-1)}{1/2}).$$

\section{Positive Rational Layers}
For this section we fix $L=\mathbb{Q}^+$.\\

\subsection{Monomial equality}

We will show that polynomials which are equal as functions must consist of the same monomials (other than the inessential ones). Moreover, we will see that given enough points of equality, two polynomials are equal everywhere.\\

\begin{lem}
Let $k>l \in \mathbb{N}$ and $a,b \in R$. The polynomial $f=a\lambda^k+b\lambda^l$ has a corner root $x\in R$. For any substitution $\lambda<x$, the second monomial is essential and for any substitution $\lambda>x$, the first monomial is essential.
\end{lem}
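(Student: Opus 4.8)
The plan is to solve the corner-root equation explicitly and then read off the essentiality statements from the total order on $F=\mathbb{R}$. Write $a=\layer{\alpha}{s_1}$ and $b=\layer{\beta}{s_2}$ with $s_1,s_2\in L=\mathbb{Q}^+$ (so in particular $s_1,s_2\neq 0$, hence both monomials are genuinely present). For a tangible substitution $\lambda=\layer{\mu}{\ell}$ with $\mu\in\mathbb{R}$, the two monomials $a\lambda^k$ and $b\lambda^l$ have tangible values $\alpha+k\mu$ and $\beta+l\mu$ respectively. Since $k>l$, the function $\mu\mapsto(\alpha+k\mu)-(\beta+l\mu)=(\alpha-\beta)+(k-l)\mu$ is strictly increasing in $\mu$ and vanishes at the unique value $\mu_0=\frac{\beta-\alpha}{k-l}$. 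Set $x$ to be any element of $R$ with $t(x)=\mu_0$; this is the claimed corner root, since at $x$ both monomials attain the same tangible value $\alpha+k\mu_0=\beta+l\mu_0$, so the set of dominating indices has size two and $x$ is a corner root by definition.

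Next I would verify the essentiality claims. If $t(\lambda)=\mu>\mu_0$, then $\alpha+k\mu>\beta+l\mu$, so by axiom (2) of the ELT structure $f(\lambda)=a\lambda^k$, i.e. the leading monomial equals $f(\lambda)$ there; moreover it is not inessential at that point because the other monomial is strictly smaller, so it cannot reproduce $f(\lambda)$. Hence $a\lambda^k$ is essential at every point with tangible value exceeding $\mu_0$. Symmetrically, if $\mu<\mu_0$ then $\beta+l\mu>\alpha+k\mu$, so $f(\lambda)=b\lambda^l$ and $b\lambda^l$ is not inessential there, so the tail monomial $b\lambda^l$ is essential at every point with tangible value below $\mu_0$. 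This is exactly the assertion, once one notes that the order on $R$ depends only on the tangible component, so "$\lambda<x$" means precisely $t(\lambda)<t(x)=\mu_0$ and likewise for "$\lambda>x$".

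I do not expect any serious obstacle here; the statement is essentially a restatement of the fact that a difference of two linear functions of $\mu$ with distinct slopes changes sign exactly once. The only points requiring a little care are bookkeeping ones: first, that the partial order $\layer{\lambda_1}{\ell_1}\geq\layer{\lambda_2}{\ell_2}\iff\lambda_1\geq_F\lambda_2$ makes "$\lambda<x$'' insensitive to layers, so the conclusion holds for \emph{every} substitution with the prescribed tangible value, not merely tangible ones; and second, that the definition of "essential at a point $a$'' requires the monomial to be non-inessential there (the layer-zero caveat), which is automatic in the strict-domination regime $\mu\neq\mu_0$ because the single dominating monomial cannot equal $f_h(a)$, which lies strictly below it. If $F$ is allowed to be a general totally ordered group rather than $\mathbb{R}$, one should also remark that $\mu_0=\frac{\beta-\alpha}{k-l}$ need not lie in $F$; but since the lemma is stated with $a,b\in R$ and we are in the section where $F=\mathbb{R}$, divisibility is not an issue, and $\mu_0$ is a genuine element of $\mathbb{R}$.
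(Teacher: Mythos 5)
Your proof is correct and follows essentially the same approach as the paper: both arguments solve for the unique tangible value at which the two monomials agree and then read off the piecewise-linear sign-change behavior on either side. You write the computation additively, locating the crossing at $\mu_0=\frac{\beta-\alpha}{k-l}$, where the paper writes it multiplicatively as $t(x^{k-l})=t(ba^{-1})$; these are the same calculation in the max-plus semiring, and your extra remarks (order on $R$ is layer-insensitive, strict domination precludes inessentiality) just make explicit what the paper leaves implicit.
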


\begin{proof}
A tangible point $x$ is a corner root if $t(ax^k)=t(bx^l)$, and therefore $t(x^{k-l})=t(ba^{-1})$. Given our assumptions, such a corner root exists. If $y>x$ then $t(y^{k-l})>t(x^{k-l})=t(ba^{-1})$ and therefore $ay^k>by^l$; similarly if $y<x$ then $ay^k<by^l$. We see that the corner root is like a scale; on one side one monomial is essential and on the other side the second monomial is essential. This is the piecewise linear behavior of supertropical algebras.
\end{proof}

\begin{cor}
Given a polynomial in one variable
$$f=a_{r_1}\lambda^{r_1}+...+a_{r_n}\lambda^{r_n}$$
such that $r_1>r_2>...>r_n$, $n>2$ and $a_{r_i}\neq -\infty$, then $f$ has a finite set of corner roots
$$x_k>x_{k-1}>...>x_1, k>0.$$

The monomials $a_{r_1}\lambda^{r_1}+a_{r_2}\lambda^{r_2}+...+a_{r_i}\lambda^{r_i}$ are quasi-essential at $x_k$ for some $i>0$. The monomial $a_{r_1}\lambda^{r_1}$ is essential at $\lambda>x_k$, the monomial $a_{r_i}\lambda^{r_i}$ is essential at $x_k>\lambda>x_{k-1}$, and the monomials between them are inessential at $\lambda\neq x_k$.\\

The monomials $a_{r_i}\lambda^{r_i}+...+a_{r_j}\lambda^{r_j}$ are quasi-essential at $x_{k-1}$ for some $j>i$. The rightmost monomial is essential at $x_{k-1}>\lambda>x_{k-2}$, and the monomial between it and the leftmost monomial are inessential at $\lambda\neq x_{k-1}$.\\

This continues until the rightmost monomial is $a_{r_n}\lambda^{r_n}$ which is essential at $\lambda<x_1$.\\
\end{cor}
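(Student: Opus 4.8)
The plan is to translate the entire statement into the elementary geometry of the upper envelope of finitely many affine functions with distinct slopes, and then to read off the conclusion. First I would fix notation: since each $a_{r_m}\neq-\infty$, set $\alpha_m=t(a_{r_m})\in\mathbb{R}$, and for $\mu\in\mathbb{R}$ put $L_m(\mu)=\alpha_m+r_m\mu$ and $T(\mu)=\max_{1\le m\le n}L_m(\mu)$. The ELT addition rules give, for any $\lambda$ with $t(\lambda)=\mu$, that $t(a_{r_m}\lambda^{r_m})=L_m(\mu)$ and $t(f(\lambda))=T(\mu)$, and that the layer of $f(\lambda)$ equals the sum in $L$ of the layers of precisely those monomials that attain $L_m(\mu)=T(\mu)$ -- each of which is a \emph{nonzero} element of $\mathbb{Q}^{+}$. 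From this, together with the positivity of $L=\mathbb{Q}^{+}$ (which forbids any cancellation of layers), I would check directly that $a_{r_m}\lambda^{r_m}$ is inessential at $\lambda$ iff $L_m(\mu)<T(\mu)$; essential at $\lambda$ iff $L_m(\mu)=T(\mu)$ and $m$ is the unique maximiser; and quasi-essential at $\lambda$ iff $L_m(\mu)=T(\mu)$ and at least two indices attain the maximum. In particular all three notions depend on $\lambda$ only through $\mu=t(\lambda)$, and $\lambda$ is a corner root iff the maximum defining $T(\mu)$ is attained at two or more indices.

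Next I would use that $T$, being a maximum of finitely many affine functions, is convex and piecewise linear with finitely many breakpoints $\mu_1<\cdots<\mu_k$; on the complementary open intervals $I_0=(-\infty,\mu_1)$, $I_1=(\mu_1,\mu_2)$, \dots, $I_k=(\mu_k,\infty)$ it is affine, with slopes $\sigma_0<\sigma_1<\cdots<\sigma_k$ (strict by convexity). Each $\sigma_j$ equals some $r_{m_j}$, and since the exponents decrease with the index this forces $n=m_0>m_1>\cdots>m_k=1$; here $\sigma_k=r_1$ because $L_1$ has the largest slope and hence is the unique maximiser as $\mu\to+\infty$, and symmetrically $\sigma_0=r_n$. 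Since $r_1\neq r_n$, no single $L_m$ can dominate on all of $\mathbb{R}$, so $k\ge1$, which is the claim $k>0$. On each $I_j$ the unique maximiser is $m_j$, so $a_{r_{m_j}}\lambda^{r_{m_j}}$ is the unique essential monomial whenever $t(\lambda)\in I_j$; writing $x_j$ for any point with $t(x_j)=\mu_j$ (so $x_k>\cdots>x_1$), this already yields ``$a_{r_1}\lambda^{r_1}$ is essential for $\lambda>x_k$'' and, at the bottom, ``$a_{r_n}\lambda^{r_n}$ is essential for $\lambda<x_1$''.

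Then I would identify the corner roots: at a breakpoint $\mu_j$ the two monomials active on $I_{j-1}$ and on $I_j$ both attain $T(\mu_j)$, so $\mu_j$ is a corner root; conversely, at a non-breakpoint $T$ is locally affine of some slope $r_m$, and any second monomial equal to $T$ there would have a different slope and hence overtake $T$ on one side -- impossible -- so the maximum is attained uniquely and the point is not a corner root. Thus the corner roots are exactly $\mu_1<\cdots<\mu_k$. For the ``block'' structure at each $\mu_j$, let $i=m_{j-1}$ and $i'=m_j$ be the indices active just left and just right of $\mu_j$, and consider a monomial whose index lies strictly between $i'$ and $i$. The set $J_m=\{\mu:L_m(\mu)=T(\mu)\}$ is convex (an intersection of closed half-lines $\{L_m\ge L_{m'}\}$), so if it were a nondegenerate interval then $L_m$ would coincide with $T$ on an interval and hence be one of the affine pieces of $T$, putting $r_m$ among $\sigma_0<\cdots<\sigma_k$ -- impossible, since $\sigma_{j-1}=r_{i'}<r_m<r_i=\sigma_j$ falls strictly between two consecutive $\sigma$'s. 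Hence $J_m$ is empty or a single point, and that point must be $\mu_j$ (the one-sided slopes of $T$ there must bracket $r_m$, and only at $\mu_j$ do they jump from below $r_m$ to above it); so $a_{r_m}\lambda^{r_m}$ is inessential at every $\lambda$ with $t(\lambda)\neq\mu_j$. Running this over $j=k,k-1,\dots,1$ gives exactly the nested description in the statement, the quasi-essential monomials at $x_j$ being those among $\{i',\dots,i\}$ whose affine function actually passes through $(\mu_j,T(\mu_j))$ -- always at least $i'$ and $i$ themselves.

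The convex-envelope ingredients (finitely many breakpoints, strictly increasing slopes, extreme slopes $r_1$ and $r_n$) are standard, so I expect no real difficulty there. The delicate point is the first step: matching the algebraic trichotomy essential / quasi-essential / inessential to the analytic conditions on $L_m$ versus $T$, where the hypothesis $L=\mathbb{Q}^{+}$ is genuinely needed -- positivity of layers is exactly what prevents a tied monomial from being absorbed and so counted as inessential. The other place needing a little care is the last claim of the third step, namely that a monomial ``skipped over'' between two consecutive active indices can touch the graph of $T$ only at the single breakpoint where the slope of $T$ jumps past that monomial's exponent.
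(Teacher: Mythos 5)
Your proof is correct, and since the paper gives no explicit proof of this corollary (it is presented as an immediate consequence of the preceding two-monomial lemma), your write-up is a careful elaboration of exactly the piecewise-linear picture the paper has in mind. Translating to the upper envelope $T=\max_m L_m$ of affine functions, identifying corner roots with breakpoints, and using convexity of $J_m=\{\mu:L_m(\mu)=T(\mu)\}$ to dispose of the ``skipped'' middle monomials is the standard Newton-polygon argument, and you handle it cleanly. You also correctly identify where the hypothesis $L=\mathbb{Q}^{+}$ is actually used: positivity prevents layer cancellation, which is what makes the trichotomy ``inessential iff $L_m<T$ / essential iff unique maximiser / quasi-essential iff tied'' hold; without it, a tied monomial could be absorbed and miscounted as inessential.

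One genuine sharpening worth noting: the corollary's phrasing ``the monomials $a_{r_1}\lambda^{r_1}+\cdots+a_{r_i}\lambda^{r_i}$ are quasi-essential at $x_k$'' reads as if \emph{every} intermediate monomial touches the graph at $x_k$, but as your convexity argument shows, a monomial whose exponent lies strictly between two consecutive active slopes has $J_m$ either a single point (necessarily $\mu_j$) or empty. The empty case is a monomial inessential everywhere, which under the paper's ``polynomials are functions'' convention should not appear in $f$ at all; since the corollary's hypothesis only says $a_{r_i}\neq-\infty$ and does not explicitly exclude inessential monomials, your qualifier ``those among $\{i',\ldots,i\}$ whose affine function actually passes through $(\mu_j,T(\mu_j))$ --- always at least $i'$ and $i$ themselves'' is the precise statement. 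This is a small but real improvement on the paper's wording rather than a divergence from its intent.
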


\begin{lem}
If $f=\sum h_k$ is a multivariate polynomial with a non-empty set of corner roots, then for all $i$, $h_i$ is quasi-essential on at least one corner root of $f$.
\end{lem}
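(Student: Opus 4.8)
The plan is to reduce the multivariate statement to the univariate setting covered by the preceding Lemma and Corollary. Fix a monomial $h_i$ of $f$. Since the set of corner roots of $f$ is non-empty, pick some corner root $b \in R^n$ of $f$. The idea is to find a generic line through $R^n$ that (a) passes through or near enough to the behaviour of $h_i$ that $h_i$ shows up as a corner root along it, and (b) lets us transfer the one-variable Corollary. Concretely, I would parametrize a line $\lambda(u) = (c_1 u^{m_1}, \dots, c_n u^{m_n})$ for a single tropical parameter $u$, with integer exponents $m_j$ chosen so that the restriction $f(\lambda(u))$ is a genuine univariate polynomial in $u$ in which \emph{every} monomial $h_k$ of $f$ restricts to a monomial of distinct $u$-degree (a Veronese-type/generic projection argument: the map $I \mapsto \sum_j m_j i_j$ separates the finitely many exponent vectors of $f$ for a suitably generic choice of $m_j$). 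The tangible coefficients $c_j$ are then chosen so that, along this line, $h_i$ becomes quasi-essential at some value $u_0$ of the parameter — this is possible precisely because the corresponding one-variable monomial $h_i(\lambda(u))$ can be made to ``tie'' with a neighbouring monomial by sliding $c_j$, using that the value of each restricted monomial is an affine function of the $t(c_j)$.

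Once we have such a line, apply the Corollary (and Lemma 4.?) to the univariate polynomial $g(u) := f(\lambda(u)) = \sum_k h_k(\lambda(u))$. Since all restricted monomials have distinct $u$-degrees, the Corollary tells us that between consecutive corner roots of $g$ exactly one monomial is essential, and at each corner root the monomials ``bracketed'' there are quasi-essential; moreover every monomial of $g$ is either essential on some interval or quasi-essential at some corner root — in particular $h_i(\lambda(u))$ is quasi-essential at some corner root $u_1$ of $g$ (this is the content we extract: a monomial that never ties and is never dominant would have to be globally inessential, contradicting its presence after restriction, so it ties somewhere). Pulling back, the point $\lambda(u_1) \in R^n$ is then a corner root of $f$ at which $h_i$ is quasi-essential, which is exactly the claim.

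The main obstacle is the choice of the line: I need to simultaneously (i) keep all of $f$'s monomials of pairwise-distinct $u$-degree so the one-variable machinery applies cleanly, and (ii) arrange the tangible shifts $c_j$ so that the particular monomial $h_i$ is not washed out — i.e.\ it is not inessential along the restricted line. Step (ii) is where one must be careful: a priori restricting to a line could make $h_i$ inessential even though it was essential or quasi-essential somewhere in $R^n$; the fix is to first locate a point $a$ where $h_i$ is essential or quasi-essential in $R^n$ (such a point exists, as otherwise $h_i$ is inessential and the claim is vacuous in the sense that $f$ has no corner roots forcing it — but here we are told $f$ \emph{does} have corner roots, so one shows a non-inessential monomial must tie), then route the line through $a$, i.e.\ impose $\lambda(u_*) = a$ for one parameter value $u_*$, which pins down the $c_j$ up to the still-free generic exponents $m_j$. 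Verifying that genericity of the $m_j$ can be maintained under this single linear constraint is the delicate bookkeeping step, but it is a finite avoidance argument over the finitely many exponent vectors of $f$.
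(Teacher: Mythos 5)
Your proposal is correct in substance but takes a genuinely different route from the paper's. The paper restricts $f$ to an \emph{axis-parallel} line through a point $c$ where $h_i$ is not inessential (fixing all coordinates of $c$ except one), applies the univariate bracketing argument, and then handles by hand the degenerate case where that one-variable restriction has no corner roots (try another axis; if all axes fail, $f$ would have no corner roots at all). You instead restrict to a \emph{generic monomial curve} $\lambda(u)=(c_1u^{m_1},\dots,c_nu^{m_n})$, with the $m_j$ chosen so that the map $I\mapsto\sum_j m_j i_j$ separates the finitely many exponent vectors of $f$; this guarantees every monomial of $f$ survives the restriction with a distinct $u$-degree, so the degenerate case never arises and the univariate Corollary applies in one stroke. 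Both reductions are sound; the paper's is more elementary (no genericity lemma needed, just a finite case analysis), while yours is more systematic and cleaner once the separating $m_j$ are in hand. Two small points on your write-up: the motivation in your first paragraph about ``sliding $c_j$'' to \emph{create} a tie is off-track and gets corrected in your final paragraph — what you actually need (and do) is route the curve through a point $a$ where $h_i$ is already essential or quasi-essential, after which the one-variable structure hands you the tie for free; and the worry at the end about the constraint $\lambda(u_*)=a$ conflicting with genericity of the $m_j$ is unfounded, since that constraint only determines the tangible parts of the $c_j$ once the $m_j$ are chosen and places no restriction on the $m_j$ themselves.
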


\begin{proof}
Recall that $h_i$ is not inessential. Therefore, let $c$ be a point so that $h_i$ is essential or quasi-essential at $c$. If $c$ is a corner root we are done, so we assume that it is not. \\

The monomial $h_i$ is in several variables and is not inessential at some point $c=(c_1,...,c_n)$. Consider $f$ as a polynomial of one variable by fixing all of the variables other than $\lambda_j$ at $c$ (we specialize $\lambda_r = c_r$). There is only a finite number of monomials, and so there is only a finite number of corner roots between $h_i$ and any other monomial $h_n$ of $f$; we denote them as $x_k$. First, assume that $\{x_k\}$ is not an empty set. Tangibles are from an ordered monoid. Thus we can sort these corner roots along with the point $c$ by size. Considering the fact that $h_i$ is quasi-essential at $c$, we take the closest corner root in the array after which $h_i$ is inessential to be $x_s$. Since this corner root is the closest to $c$, $h_i$ and $h_s$ are quasi-essential at $x_s$, for otherwise there is another monomial $h_w$ bigger than $h_i$ but then $x_w$ is between $c$ and $x_s$ which is false. We obtained $x_s$ as a corner root on which $h_i$ is quasi-essential.\\

However, if the set of corner roots $\{x_k\}$ is empty we choose a variable other then $j$. If the corner roots sets are empty for all variables, it follows that the polynomial has no corner roots altogether, which is absurd. Indeed, if $f$ has any corner root, it must have at least one monomial which differs in at least one variable power from $h_i$. Leaving this variable free we obtain a non empty set of roots.
\end{proof}

We will prove that given $s(f(\layer{a}{x}))$ for all $x$, one can know all of the monomials that are quasi-essential at $a$. As a consequence, one can know all of the monomials of a polynomial~$f$, due to the lemma above that assures us that each monomial is at least quasi-essential at some corner root.\\

\begin{lem}
Let $f$ be a univariate polynomial, and let $g$ be the polynomial which contains all of the monomials of $f$ which are not inessential at some given point $a$. Then $g$ is of the form $g=c_a\sum_{k=0}^{n}{\layer{0}{b_k}} \lambda^ka^{n-k}$, where $c_a$ is tangible.
\end{lem}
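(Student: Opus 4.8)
The plan is to show that the "not inessential at $a$" part $g$ of a univariate polynomial $f$ is \emph{primary in $\layer{a}{1}$} (or any tangible with tangible value $a$), and then apply the earlier structural lemma characterizing primary polynomials. First I would recall that, by the definition of the essential part, every monomial appearing in $g$ is either essential or quasi-essential at the point $\layer{a}{1}$ (more precisely at whatever tangible point with tangible value $a$ we are working over); none of them is inessential there. Fix the tangible value $c_a := t\big(f(\layer{a}{1})\big)$. Since each monomial $h_k = d_k \lambda^k$ of $g$ is not inessential at that point, it cannot be strictly dominated, so its tangible value at $\layer{a}{1}$ equals $c_a$; that is, $t(d_k) + k\cdot a = c_a$ (writing the group operation additively). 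Solving, $t(d_k) = c_a - k a$, so $d_k = \layer{0}{b_k}\,\lambda^{0}$-coefficient of tangible value $c_a - ka$, i.e. $d_k = \layer{(c_a - ka)}{b_k}$ for some $b_k \in L = \mathbb{Q}^+$. Rewriting $c_a - ka = (c_a - na) + (n-k)a$ for the top degree $n$, and pulling the common tangible factor $\layer{(c_a - na)}{1}$ out front (absorbing it into $c_a$ after a harmless renaming of the constant), each coefficient becomes $c_a \cdot \layer{0}{b_k} \cdot a^{\,n-k}$ in the notation of the statement, giving $g = c_a \sum_{k} \layer{0}{b_k}\lambda^k a^{n-k}$.

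The key steps, in order, are: (1) invoke the definition of the essential part / the remark that it is primary, so that every monomial of $g$ has the same tangible value at the chosen point with tangible value $a$; (2) translate "same tangible value $c_a$" into the coordinate equation $t(d_k) + k a = c_a$ for the coefficient $d_k$ of $\lambda^k$; (3) solve for $t(d_k)$ and factor out the tangible scalar to reach the claimed normal form; (4) note $b_k \neq 0$ is automatic in $L=\mathbb{Q}^+$ (and in any case follows since an inessential-forbidden monomial must have nonzero layer), and that the powers $k$ range over a subset of $\{0,\dots,n\}$, with the missing ones simply having $b_k = 0$ so the sum can be written over the full range $k=0,\dots,n$. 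This is essentially a one-variable specialization of the earlier lemma on polynomials primary in a point, with $\lambda$ itself playing the role of the single indeterminate and $a^{n-k}$ the monomial $a_I$ of that lemma.

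I do not expect a serious obstacle here; the statement is really a bookkeeping reformulation of the primary-polynomial lemma. The one point requiring a little care is the choice of the reference point: the lemma says "not inessential at some given point $a$", and I would read $a$ as a tangible element of $R$ (so $t(a)$ is its tangible value), and check that the argument only uses that its tangible value is fixed — the layer of the reference point is irrelevant because multiplying the evaluation point by a layer only rescales every monomial by the same layer factor, hence does not change which monomials are (in)essential. A second minor subtlety is making sure the extracted tangible constant $c_a$ in the final form matches: after factoring out $\layer{(c_a - na)}{1}$, the leading coefficient ($k=n$) has layer $b_n$ and the outer constant carries the tangible data, so one should state explicitly that "$c_a$" in the conclusion denotes this rescaled tangible element rather than literally $t(f(\layer{a}{1}))$; I would simply rename it. Everything else is forced.
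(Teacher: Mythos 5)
Your proposal is correct and follows essentially the same route as the paper's own proof: observe that every monomial of $g$ shares the tangible value of $g$ at $a$ (since none is dominated there), write the resulting linear equation $t(d_k)\cdot a^{k}=t(g(a))$ for each coefficient $d_k$, solve for $t(d_k)$, and pull out the common tangible factor $c_a=t(g(a))\,a^{-n}$. The extra remarks about the layer of the reference point and the case $b_k=0$ are fine but not needed; the core computation matches the paper's.
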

\begin{proof}
Clearly, all of the monomials of $g$ have the same tangible value at $a$. Let $n$ be the degree of $g$, and define $c_a=t(g(a))a^{-n}$. Let $h$ be a monomial of $g$ of degree $k$. Then $c_aa^n=t(g(a))=t(h(a))=ba^k$. Therefore $t(b)= c_aa^{n-k}$, as desired.
\end{proof}

\begin{lem}
If $f$ and $g$ are two polynomials in one variable (with non-empty corner root sets), then $f$ and $g$ are root-equivalent and are equal at one point $\iff$ $f$ and $g$ have the same monomials $\iff$ $f=g$ everywhere.\\
\end{lem}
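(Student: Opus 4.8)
The plan is to prove the cycle: if $f=g$ everywhere then $f,g$ are root-equivalent and equal at one point; if $f,g$ are root-equivalent and equal at one point then they have the same monomials; and if they have the same monomials then $f=g$ everywhere. Throughout, following the paper's convention, ``having the same monomials'' is understood up to inessential monomials. The first implication is immediate (equality as functions forces $s(f(c))=s(g(c))$ at every corner root and $f(p)=g(p)$ at any point), and the last is immediate too, since inessential monomials do not affect the value, so two polynomials with the same non-inessential monomials agree as functions. Thus all the content is in the middle implication $(A)\Rightarrow(B)$; the degenerate case $f\equiv-\infty$ is dispatched separately.

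For $(A)\Rightarrow(B)$ the key move is to probe $f$ and $g$ along the fiber over a corner root by varying only the layer of the argument. Fix a corner root $a$ of $f$. Every $\lambda$ with $t(\lambda)=t(a)$ is again a corner root of $f$, because the tangible value of each monomial at such a $\lambda$ equals its value at $a$; hence root-equivalence gives $s(f(\lambda))=s(g(\lambda))$ for all such $\lambda$. Since the layers lie in $\mathbb{Q}^+$ and are therefore strictly positive, a monomial is inessential at $a$ exactly when its tangible value at $a$ is strictly sub-maximal, and such a monomial remains strictly sub-maximal all along the fiber; so $f$ agrees there with its essential part $f_a$ at $a$. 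Writing $f_a=c_a\sum_k\layer{0}{b_k}\lambda^k a^{n-k}$ as in the essential-part lemma, with $c_a$ tangible and the occurring $b_k$ in $\mathbb{Q}^+$, a short computation gives $s(f(\lambda))=\sum_k b_k\,s(a)^{n-k}x^k$ with $x:=s(\lambda)$ — an honest polynomial in $x$ on the infinite set $\mathbb{Q}^+$, whose coefficient of $x^k$ is nonzero precisely when $b_k\neq 0$.

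Now I read off the consequences. As $\mathbb{Q}^+$ is infinite, this polynomial function determines its coefficients. If $a$ were not a corner root of $g$, then along the fiber $g$ would be governed by a single dominant monomial, so $s(g(\lambda))$ would be a single power $e\,x^m$, contradicting that $s(f(\lambda))$ has at least two nonzero coefficients; hence $f$ and $g$ have the same corner roots. At a common corner root $a_i$, equating the two polynomials in $x$ shows that $f_{a_i}$ and $g_{a_i}$ involve the same monomial degrees and, after dividing out the known powers of $s(a_i)$, the same layers $b_k$; so they coincide up to a tangible scalar $\mu_i$. By the corollary the essential parts at consecutive corner roots share a common monomial (the case of only two monomials being handled directly), and over all corner roots these essential parts exhaust the non-inessential monomials of $f$ and of $g$ (each such monomial being quasi-essential at some corner root); therefore the $\mu_i$ all agree with one tangible $\mu$, and $g=\mu f$ monomial by monomial. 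Being tangible, $\mu$ makes this an equality of functions, so evaluating at the point where $f$ and $g$ agree forces $t(\mu)=0$, i.e.\ $\mu=\layer{0}{1}$, and $f$ and $g$ have the same monomials.

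I expect the genuine obstacle to be the extraction of the layers in the second step: it is precisely here that the choice $L=\mathbb{Q}^+$ — an infinite, cancellative, \emph{positively-valued} layer monoid — is essential, and the argument fails over $L=\mathbb{N}$, as the example $\lambda^2+\layer{1}{2}\lambda+\layer{0}{1}$ shows. A secondary point needing care is the gluing of the local tangible scalars $\mu_i$ into a global $\mu$, which rests on the interlocking of essential parts at successive corner roots recorded in the corollary.
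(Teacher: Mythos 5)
Your proof is correct and follows essentially the same route as the paper: probe along the fiber $\{\layer{t(a)}{\ell}:\ell\in\mathbb{Q}^+\}$ over a corner root $a$, extract the layers $b_k$ of the essential part from $s(f(\layer{a}{\ell}))=\sum_k b_k\ell^k$ (times the fixed factor $s(a)^{n-k}$), use the interlocking of essential parts at consecutive corner roots to glue the local tangible scalars, and then pin the global scalar with the single point of equality. The one place you diverge is the extraction step: you invoke the fact that a polynomial in $\ell$ with coefficients in $\mathbb{Q}^+\subset\mathbb{Q}$ is determined by its values on the infinite set $\mathbb{Q}^+$, whereas the paper uses a more combinatorial device -- clear denominators with a single $m$, choose $i>\max_k(mb_k,md_k)$, and read off the coefficients from the uniqueness of the base-$i$ representation of $\sum mb_k i^k$ -- which requires only one evaluation. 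Both versions rest on the same underlying hypothesis that $L=\mathbb{Q}^+$ is infinite and cancellative with no zero layer, and indeed your concluding diagnostics correctly identify this as the crux. Two small additions you make explicitly that the paper leaves implicit: (i) showing that $f$ and $g$ must share the same set of corner roots (the paper simply writes ``the sorted set of roots of $f$ (and $g$)'' without comment), and (ii) making the final normalization $\mu=\layer{0}{1}$ precise via $t(\mu)=0$ and cancellation in $\mathbb{Q}^+$.
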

\begin{proof}

Fix a tangible corner root $a$ and look at the sum of all non-inessential monomials at~$a$; $$c_a\sum_{k=0}^{n}{\layer{0}{b_k}}\lambda^ka^{n-k}.$$ Next we look at the root at layer $i$: $f(\layer{a}{i})=c_a\sum_{k=0}^{n}{\layer{a^{n}}{b_k\cdot i^k}}$. Now its layer
$$s(f(\layer{a}{i}))=s(c_a\sum_{k=0}^{n}{\layer{a^{n}}{b_k\cdot i^k}})=\sum_{k=0}^{n} b_ki^k$$
(we include the layer of $c_a$ into the $b_k$'s, and therefore $s(c_a)=1$). This equation holds for all $i\in\mathbb{N}$. A similar argument for $g$ yields $s(g(\layer{a}{i}))=\sum d_ki^k$, with $e_a$ as the constant. Our goal is to show that $b_k=d_k$ and that $c_a$=$e_a$. This will prove that $f$ and $g$ have the same monomials, and the rest of the proof is trivial.\\

Next let us prove that $b_k=d_k$. We take a number $m\in\mathbb{N}$ so that $mb_k$ and $mc_k$ are in $\mathbb{N}$ for all $k$ (this is the $lcm$ of all of the denominators). Now take $i$ to be $\max_{k}(mb_k,md_k) + 1$. $\sum mb_ki^k$ is written to the base $i$, but this form is unique. Recalling $f(\layer{a}{i})=g(\layer{a}{i})$, we use the same argument for $g$ to obtain $mb_k=md_k$ and finally $b_k=d_k$.\\

We will now show that given the constant $c_a$ of a root $a$, all of the other constants are known. Let $\left\{a_i\right\}_{i=1}^{r}$ be the sorted set of roots of $f$ (and $g$). Fix a root $a_i$ and assume $c_{a_i}$ is known. Between two consecutive roots $a_k$ and $a_{k+1}$, there is one essential monomial (otherwise there would be another root between them). Clearly, this essential monomial is quasi-essential at the roots also. Therefore there is a monomial of the form $c_{a_{i}}\layer{0}{b_k}\lambda^ka_i^{n-k}$ and also of the form $c_{a_{i+1}}\layer{0}{b_k}\lambda^ka_{i+1}^{n-k}$. We conclude that $c_{a_{i}}a_i^{n-k}=c_{a_{i+1}}a_{i+1}^{n-k}$ and therefore we can calculate $c_{a_{i+1}}$ from $c_{a_i}, a_i$ and $a_{i+1}$. This argument works in the other direction as well, and thus all of $\left\{c_{a_i}\right\}_{i=1}^{r}$ are known.\\

Finally, since $f$ and $g$ are equal at some point, they have equal constants $c_a=e_a$ for some $a$. The rest of the constants are equal since they are calculated from this constant and the other roots (which are equal for $f$ and $g$). We obtained the desired result: if $f$ and $g$ are root-equivalent and are equal at some point, then they have exactly the same monomials, and therefore are equal everywhere. If they are equal everywhere, then clearly they are equal at some point and root-equivalent.
\end{proof}

We will now prove this monomial equality for several variables using an induction whose base is the above lemma. In several variables, the corner roots are much more complex and interesting as we will see later in this paper. First we prove a simple lemma which helps us show that any point of equality between two root-equivalent polynomials means equality at every point.

\begin{lem}
Let $f=\sum h_i $ be a polynomial in $n$ variables with a non-empty set of corner roots, and let $c=(c_1,...,c_n)$ be any point. Then there is a variable $\lambda_i$, and a corner root $a$ such that $a=(x_1,...,x_{i-1},c_i,x_{i+1},...x_n)$.
\end{lem}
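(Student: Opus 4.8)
The statement says: given a polynomial $f = \sum h_i$ in $n$ variables with a nonempty corner root set, and given an arbitrary point $c = (c_1, \ldots, c_n)$, there is a coordinate index $i$ and a corner root $a$ of $f$ whose $i$-th coordinate equals $c_i$. Let me think about how to prove this.

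The key idea is to reuse the machinery already developed in the previous lemma (the one asserting each monomial is quasi-essential on some corner root, via specialization to one variable). The plan is to fix all coordinates except one and slide that one variable to find a corner root, then show that by choosing the right coordinate we can keep the fixed value equal to $c_i$.

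So concretely: pick any coordinate $j$. Specialize $f$ by setting $\lambda_r = c_r$ for all $r \neq j$, leaving $\lambda_j$ free. This gives a univariate polynomial $\tilde{f}(\lambda_j)$. If $\tilde{f}$ has at least two monomials with distinct $\lambda_j$-degrees (i.e., it is genuinely nonconstant as a univariate polynomial after specialization, and the specialized coefficients are not $-\infty$), then by the earlier lemma (Lemma on $a\lambda^k + b\lambda^l$, and its Corollary) $\tilde{f}$ has a corner root $x_j \in R$. Then $a := (c_1, \ldots, c_{j-1}, x_j, c_{j+1}, \ldots, c_n)$ is a corner root of $f$ of the desired form — its coordinates other than $j$ are exactly the $c_r$'s, so in particular its $r$-th coordinate equals $c_r$ for any $r \neq j$; choosing $i$ to be any such $r$ (there is one since $n \ge 1$ and... wait, if $n = 1$ we just take $i = j$ and the corner root's only coordinate is $x_j$, but then we need $x_1 = c_1$).

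Hmm, let me reconsider — for $n = 1$ the claim would force a corner root at $c_1$ itself, which is false in general. So the intended reading must be $n \ge 2$, or "variable $\lambda_i$" is allowed to be the slid one with the understanding that... Actually re-reading: $a = (x_1, \ldots, x_{i-1}, c_i, x_{i+1}, \ldots, x_n)$ — so the $i$-th coordinate is pinned to $c_i$ and the others are free. For this we slide coordinates $\neq i$. So: I need to find a corner root agreeing with $c$ in at least one coordinate.

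The real plan: if $c$ is not already a corner root, then by the lemma that $f$ has a corner root somewhere, and by the specialization argument, there is some coordinate along which sliding from $c$ reaches a corner root; but more carefully, I should argue as follows. Since $f$ has a corner root $b = (b_1, \ldots, b_n)$, consider the "path" from $c$ to $b$ changing one coordinate at a time. Along the way, look at the essential part and use the piecewise-linear structure: there must be a first corner root encountered, and it agrees with $c$ in all coordinates that haven't yet been changed — in particular in at least one coordinate (the last one, say, if we change coordinates $1, 2, \ldots$ in order and stop at the first corner root). The main obstacle is making the "first corner root along the broken-line path" argument rigorous: I must ensure that when I move along coordinate $k$ with coordinates $1, \ldots, k-1$ set to $b$-values and $k+1, \ldots, n$ set to $c$-values, the specialized univariate polynomial actually has a corner root between $c_k$ and $b_k$ — this follows because at $c_k$ (start of this leg, which equals the previous corner root or $c$ itself) versus $b_k$ the dominant monomial must change (since $b$ is a corner root and $c$, if not one, has a single dominant monomial differing from the configuration at $b$), invoking the Corollary's description of essential/inessential monomials between corner roots. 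I would write this as an induction on the number of coordinates in which $c$ and $b$ differ, with the univariate Lemma as the base mechanism, and conclude that the corner root produced shares coordinate $i$ (the smallest un-changed index, or any un-changed index) with $c$.
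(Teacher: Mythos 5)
Your first sketch is actually the paper's own argument in disguise, and it works; the problem is that you abandoned it prematurely. The fix you are missing: since the corner root set of $f$ is nonempty, $f$ has at least two distinct monomials, and these must differ in the exponent of some variable, say $\lambda_j$. If you choose \emph{that} $j$ (rather than ``any'' $j$), then after specializing $\lambda_r = c_r$ for all $r \neq j$ the two monomials still have distinct $\lambda_j$-degrees, so the univariate lemma guarantees a corner root $x_j$, and $a=(c_1,\dots,c_{j-1},x_j,c_{j+1},\dots,c_n)$ does the job with $i$ any index $\neq j$ (so the statement implicitly assumes $n\ge 2$, which you correctly noticed). The paper's version is the same idea but fixes only one coordinate $\lambda_i=c_i$ and finds a corner root of the resulting $(n-1)$-variate polynomial; either specialization works once $j$ is chosen correctly.

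The backup ``broken-line path'' argument you pivoted to has a genuine gap and does not prove the claim. Along a path from $c$ to some known corner root $b$, changing one coordinate per leg, there is no guarantee that the first corner root encountered shares any coordinate with $c$: it may occur on a late leg where all remaining fixed coordinates are $b$-values, not $c$-values. Concretely, take $f=\lambda_1+\lambda_2$, $c=(1,0)$, $b=(5,5)$, and the path $(1,0)\to(5,0)\to(5,5)$. On the first leg $\lambda_1$ dominates throughout; on the second leg $\lambda_1$ still dominates until you reach $b=(5,5)$ itself, which is the first corner root on the path and agrees with $c$ in no coordinate. (The lemma is still true here --- $(1,1)$ and $(0,0)$ are corner roots agreeing with $c$ --- but your path never finds them.) So the ``dominant monomial must change on each leg'' claim is false, and the induction you propose would not go through.
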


\begin{proof}
Having a corner root set is equivalent to $f$ having at least two monomials, as we have seen when proving monomial equality. These two monomials must differ at the power of at least one variable, call it $\lambda_j$. Now fix any $\lambda_i=c_i$ such that $i\neq j$. Clearly we obtain a polynomial with $n-1$ variables and at least two different monomials. Therefore, this polynomial has a corner root $(x_1,...,x_{i-1},x_{i+1},...,x_n)$ and $a$ above is a corner root of $f$ as desired.
\end{proof}

\begin{thm}\label{moneq}
Let $f$ and $g$ be two polynomials with non-empty corner root sets, then $f$ and $g$ are root-equivalent and are equal at one point $\iff$ $f$ and $g$ have the same monomials $\iff$ $f=g$ everywhere.\\
\end{thm}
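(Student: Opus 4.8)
The plan is to prove Theorem~\ref{moneq} by induction on the number of variables $n$, with the univariate case being exactly the lemma already established. Throughout, the nontrivial direction is ``root-equivalent and equal at one point $\implies$ same monomials''; the other implications are immediate (same monomials makes the two polynomials literally the same function, hence equal everywhere and a fortiori root-equivalent and equal at a point).

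For the inductive step, suppose the theorem holds for polynomials in fewer than $n$ variables, and let $f=\sum h_i$ and $g=\sum h'_j$ be root-equivalent in $n$ variables and agree at some point $p$. The idea is to slice: fix the last variable $\lambda_n = c$ for a generic tangible value $c\in F$ and consider the restricted polynomials $f|_{\lambda_n=c}$ and $g|_{\lambda_n=c}$ in $n-1$ variables. Two things must be checked for the induction hypothesis to apply to the slice. First, the restricted polynomials must be root-equivalent in $n-1$ variables; this should follow because a corner root $a'$ of the slice, together with $\lambda_n = c$, is (for generic $c$) a corner root of the ambient $f$ — essentially the content of the slicing lemma just proved (Lemma with $a=(x_1,\dots,x_{i-1},c_i,x_{i+1},\dots,x_n)$) — so the layer data $s(f|_{\lambda_n=c}(a')) = s(f(a',c))$ equals $s(g(a',c)) = s(g|_{\lambda_n=c}(a'))$. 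Second, we need a point of equality for the slice: since $f$ and $g$ agree on a set that is ``large'' in each coordinate direction (being equal everywhere on an open region where a single monomial dominates, by the piecewise-linear structure), one can find a $c$ for which $f|_{\lambda_n=c}$ and $g|_{\lambda_n=c}$ agree at some point. Concretely, one uses that the two functions agree on the unbounded region where the respective leading behavior in $\lambda_n$ dominates, pick $c$ there.

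Having applied the induction hypothesis, for each generic $c$ the slices $f|_{\lambda_n=c}$ and $g|_{\lambda_n=c}$ have exactly the same monomials. This means: for each multi-index $I' \in \mathbb{N}^{n-1}$, the coefficient of $\lambda^{I'}$ in $f|_{\lambda_n=c}$ equals that in $g|_{\lambda_n=c}$. But the coefficient of $\lambda^{I'}$ in $f|_{\lambda_n=c}$ is $\sum_{k} a_{(I',k)} c^k$, a ``polynomial in $c$'' over the ELT algebra (after grouping), and likewise for $g$. So we are reduced to showing that two univariate ELT polynomials in the variable $c$ (namely $c \mapsto \sum_k a_{(I',k)}c^k$ and $c \mapsto \sum_k a'_{(I',k)}c^k$) which agree for all generic tangible $c$ must have the same monomials — and this is precisely the univariate case again, provided we verify that equality as functions for a cofinite set of $c$ gives root-equivalence plus one point of agreement. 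Running this for every $I'$ recovers $a_{(I',k)} = a'_{(I',k)}$ for all $I', k$, i.e. $f$ and $g$ have the same monomials, completing the induction.

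The main obstacle I anticipate is the bookkeeping around ``generic $c$'' and the matching of corner roots between a slice and the ambient polynomial: one must be careful that the finitely many bad values of $c$ (where a slice acquires a spurious corner root, or loses one, or where two previously distinct corner-root loci collide) can be avoided while still retaining enough slices — in particular infinitely many, or at least a cofinite set — to run the final univariate-in-$c$ argument. A secondary subtlety is ensuring the ``equal at one point'' hypothesis propagates to the slice; here the piecewise-linear / dominance structure of ELT polynomials (the region where the top-degree-in-$\lambda_n$ monomial wins is open and unbounded, and there $f=g$ since both equal that monomial once we know leading terms match) is what makes it work, but it requires first pinning down that the leading coefficients in $\lambda_n$ agree, which is itself a small instance of the statement. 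Organizing these steps so the induction is genuinely well-founded — rather than circular — is the delicate point.
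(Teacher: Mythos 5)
Your proposal takes a genuinely different route from the paper. The paper does not slice and recurse on $n-1$ variables; instead it invokes the immediately preceding lemma, which, given the point of equality $c$, produces a corner root $a$ of $f$ sharing one coordinate with $c$. At that corner root, root-equivalence gives $s(f(\layer{a}{\ell}))=s(g(\layer{a}{\ell}))$ for all layer-perturbations $\ell$; since the essential parts at $a$ are primary and $\psi$ turns them into classical polynomials in the layers, the essential monomials at $a$ of $f$ and $g$ agree up to a multiplicative constant, which is then pinned down by the point of equality. The paper then iterates over variables and corner roots to sweep up all monomials. The advantage of that route is that it never leaves the corner-root locus, so there is no genericity bookkeeping: root-equivalence applies directly at every point used. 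Your route instead reduces dimension by slicing, which is conceptually clean but forces you to work at non-corner-root points.

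The flaw you yourself flag is real and is the decisive gap. To apply the inductive hypothesis to $f|_{\lambda_n=c}$ and $g|_{\lambda_n=c}$ for generic $c$, you need a point where the two slices agree, but the only equality you have a priori is at the single point $p$, which lies on exactly one slice $\lambda_n=p_n$. The appeal to ``$f$ and $g$ agree on an open region where a single monomial dominates'' presupposes that the dominating monomials of $f$ and $g$ coincide there — which is precisely (a piece of) the conclusion, not something you have in hand. As written the induction is therefore circular. A repair is possible but requires reversing your slicing direction: first slice through $p$ (set $\lambda_n=p_n$), apply the $(n-1)$-variable case to get $f=g$ on that entire hyperplane; then, for each $q'\in R^{n-1}$, restrict to the line $\{(q',\lambda_n)\}$, which meets the hyperplane at $(q',p_n)$ and so has a point of equality, and apply the univariate theorem; finally, the coefficient of each $\lambda_n^k$ gives $\hat f_k(q')=\hat g_k(q')$ for all $q'$, at which point the $(n-1)$-variable theorem finishes. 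Even this repaired plan has a second subtlety you do not address: ``same monomials of the slice'' only yields $q_{I'}(c)=q'_{I'}(c)$ on the set of $c$ for which $\lambda^{I'}$ is actually present (not inessential) in the slice; you need to argue that this set is large enough (it is nonempty open, coming from the points where the $(I',k)$ monomial of $f$ is essential) and that this suffices to run the univariate case. So: right general shape, but the circularity in the point-of-equality step must be broken by the two-stage slicing, and the inessential-monomials bookkeeping is not merely genericity hygiene — it is a hole in the final recombination step.
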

\begin{proof}
Assume that $f$ and $g$ are root-equivalent and there exists a point $c$ such that $f(c)=g(c)=x$. By the lemma above there is a variable, call it $\lambda_n$, and a corner root $a$ such that $a=(x_1,...,x_{n-1},c_n)$.

At any given corner root the layers behave like classical polynomials. For example consider $f(\lambda)=\layer{\lambda}{2}+\layer{0}{1}$ then $f(\layer{0}{x})=\layer{0}{2x+1}$. Since $f$ equals $g$ at every layer of the corner root, their essential monomials at the corner root must be equal up to multiplication by a constant. In view of the point of equality they are exactly equal.

Similarly to the proof of the above lemma, by choosing different variables and direction we prove that $f$ and $g$ must consist of exactly the same monomials, and the rest of the proof follows.
\end{proof}

\begin{cor}
If $f$ and $g$ are two polynomials with non-empty corner root sets such that $f$ and $g$ are root-equivalent, then there is some constant $c$ such that $f=cg$.\\
\end{cor}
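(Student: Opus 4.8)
The corollary follows almost immediately from Theorem~\ref{moneq}, so the plan is to reduce the general root-equivalent case to the hypotheses of that theorem by supplying the missing ``point of equality.'' The obstacle is that root-equivalence alone does not force $f$ and $g$ to agree at any point --- they may differ by a multiplicative constant, which is exactly what we want to extract. So the strategy is: pick a convenient corner root, compare the coefficients there, and scale one polynomial so that the theorem applies.

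First I would fix a corner root $a$ of $f$; by the earlier lemmas on corner roots (in particular the lemma guaranteeing every monomial is quasi-essential at some corner root, and Lemma on the form of the essential part), the essential part $f_a$ of $f$ at $a$ is primary at $a$, hence of the form $c_a\sum_{k} \layer{0}{b_k} a_I \lambda^I$ with $c_a$ tangible. Since $f$ and $g$ are root-equivalent, $s(f(c)) = s(g(c))$ for every corner root $c$; in particular this holds at $\layer{a}{i}$ for every layer $i$ along the relevant slices, which (as in the proof of Theorem~\ref{moneq}) forces $g$ to have the same non-inessential monomials at $a$ up to a single common multiplicative scalar. Concretely, there is a tangible scalar $c$ (the ratio of the two tangible constants $c_a$ and $e_a$ at $a$, with trivial layer since the layer parts already match) such that $f(a) = c\cdot g(a)$.

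Next I would set $g' = cg$ and observe two things: $g'$ is still root-equivalent to $f$ (multiplying by a tangible constant $c$ shifts every evaluation by $c$, leaving the layer $s(\cdot)$ unchanged on corner roots, since $s(c) = 1$), and now $f(a) = g'(a)$, i.e. $f$ and $g'$ agree at the point $a$. Thus $f$ and $g'$ satisfy the hypotheses of Theorem~\ref{moneq}: they are root-equivalent and equal at a point. The theorem then gives $f = g'$ everywhere, i.e. $f = cg$, as claimed.

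The main thing to be careful about --- and the only real content beyond citing Theorem~\ref{moneq} --- is justifying that a \emph{single} tangible constant $c$ works simultaneously at $a$: one must check that the layer parts $b_k$ of the essential monomials of $f$ and $g$ at $a$ already coincide (so the discrepancy is purely in the tangible constants $c_a$ versus $e_a$), and that this tangible discrepancy is independent of the corner root chosen. The first point is precisely the base computation inside the proof of Lemma on univariate monomial equality (the base-$i$ uniqueness argument showing $b_k = d_k$); the second follows because, once $f$ and $cg$ agree at one corner root and are root-equivalent, Theorem~\ref{moneq} forces agreement everywhere, so no inconsistency at other corner roots can arise. Hence the constant extracted at $a$ is automatically the global one.
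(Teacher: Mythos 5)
Your proof is correct and follows the approach the paper clearly intends: extract the tangible constant $c$ from the essential parts at a single corner root (where root-equivalence already forces the layer parts to match, as shown inside the proof of Theorem~\ref{moneq}), note that replacing $g$ by $cg$ preserves root-equivalence since $s(c)=1$, and then invoke Theorem~\ref{moneq} with the newly supplied point of equality. The paper gives no separate proof for this corollary, and your reduction is precisely the intended one.
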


\subsection{Primary polynomials}

\begin{lem}
Let $f$ be a polynomial in one variable. Then there is a factorization of $f$ into primary polynomials of its roots, and a power of $\lambda$.\\
\end{lem}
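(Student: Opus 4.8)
The plan is to proceed by induction on the number of corner roots of $f$, using Corollary 4.4 to organize the roots and the structure of one-variable supertropical polynomials as piecewise-linear functions. First I would recall from Lemma~4.1 and Corollary~4.4 that $f = a_{r_1}\lambda^{r_1} + \cdots + a_{r_n}\lambda^{r_n}$ (with $r_1 > \cdots > r_n$) has a finite, totally ordered set of corner roots $x_k > x_{k-1} > \cdots > x_1$, that the tail power $\lambda^{r_n}$ can be factored out immediately as the requested ``power of $\lambda$'', and that between consecutive corner roots exactly one monomial is essential. The base case is $f$ with a single corner root: then Lemma~4.7 tells us the non-inessential part at that root is already primary in that root, and one checks the inessential monomials do not interfere, so $f$ itself (up to the power of $\lambda$) is primary.

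For the inductive step I would isolate the largest corner root $x_k$. By Corollary~4.4, for $\lambda > x_k$ the leading monomial $a_{r_1}\lambda^{r_1}$ is essential, and at $x_k$ some initial block of monomials $a_{r_1}\lambda^{r_1} + \cdots + a_{r_i}\lambda^{r_i}$ is quasi-essential while $a_{r_i}\lambda^{r_i}$ becomes essential just below $x_k$. I would then define the primary factor $p_{x_k}$ at $x_k$ to be (a tangible rescaling of) the essential part of $f$ at $x_k$, which by Lemma~4.6/4.7 has the form $c_{x_k}\sum_{j} \layer{0}{b_j}\lambda^{j} x_k^{m-j}$ over the relevant range of exponents, and attempt to write $f = p_{x_k} \cdot f'$ where $f'$ is a polynomial whose corner root set is exactly $\{x_1,\dots,x_{k-1}\}$. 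To construct $f'$ explicitly, I would take the "quotient" monomial-by-monomial: $p_{x_k}$ should carry the behaviour of $f$ for $\lambda \geq x_k$ and contribute trivially below, so $f'$ carries the behaviour for $\lambda \leq x_{k-1}$; the leading monomial of $p_{x_k}$ is $a_{r_1}\lambda^{\deg p_{x_k}}$ matched so that $p_{x_k} \cdot f'$ reproduces $a_{r_1}\lambda^{r_1}$, and the tail of $p_{x_k}$ is $c_{x_k}\layer{0}{b}\lambda^{r_i} x_k^{\,\cdot}$ matched so the product reproduces $a_{r_i}\lambda^{r_i}$ at the interface, with $f'$ inheriting the monomials $a_{r_i}\lambda^{r_i}, \dots, a_{r_n}\lambda^{r_n}$ suitably rescaled.

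The main obstacle I anticipate is verifying that this product \emph{equals} $f$ as a function (not merely that it is root-equivalent) — in particular, that the cross terms produced by multiplying $p_{x_k}$ by the non-leading, non-tail monomials of $f'$ are all inessential, so that $p_{x_k} \cdot f'$ has precisely the monomials of $f$ up to inessential ones, and then invoking Theorem~\ref{moneq} to upgrade root-equivalence-plus-one-point-of-equality to honest equality. This is where the ordering of corner roots from Corollary~4.4 does the real work: because the corner roots of $p_{x_k}$ all coincide at $x_k$ and those of $f'$ lie at or below $x_{k-1} < x_k$, in any overlap region one of the two factors has a unique essential monomial, which forces the product's cross terms to be dominated. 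Once $f = p_{x_k} \cdot f'$ is established with $f'$ having corner root set $\{x_1,\dots,x_{k-1}\}$, the induction hypothesis factors $f'$ into primary polynomials of $x_1,\dots,x_{k-1}$ and a power of $\lambda$, and reassembling gives the claimed factorization of $f$. I would close by noting the degenerate cases (no corner roots, i.e. $f$ a single monomial; or $r_n > 0$) are handled by peeling off $\lambda^{r_n}$ at the outset.
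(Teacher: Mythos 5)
Your proposal follows essentially the same approach as the paper: both build the factorization out of the normalized essential parts at the successive corner roots, use the ordering of the roots to guarantee that the cross terms in the product are inessential, and invoke Theorem~\ref{moneq} to upgrade root-by-root agreement of layers to equality as functions. The only cosmetic difference is direction --- you peel off the primary at the largest root and recurse by induction on the number of roots, whereas the paper iteratively constructs the product $g_1 g_2 \cdots$ starting from the smallest root.
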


\begin{proof}
Take the highest $m\in\mathbb{N}$ such that $\lambda^m$ divides each monomial in $f$. After we factor out $\lambda^m$, the constant monomial is essential. We call the new polynomial $f_1$, and order the root set $\left\{a_i\right\}_{i=1}^{m}$ where $a_1<a_2<...<a_m$. The essential part at $a_1$ is primary in $a_1$. Let $j$ be the highest power of $\lambda$ in this primary polynomial, and let $c_j$ be the coefficient of $\lambda^j$. We normalize by multiplying by $c_j^{-1}$, and call this primary polynomial $g_1$.\\

Now take the essential part at $a_2$. The highest power of $\lambda$ of a monomial in $g_1$ is the smallest power here. Therefore by factoring out $\lambda^j$ where $j=\deg(g_1)$ of the essentials above, we get a primary polynomial $g_2$ in $a_2$. Let us look at $g_1g_2$ as a polynomial. First, it has the roots $a_1$ and $a_2$. Also, when $a<a_2$, $g_2(a)=b_n$. Therefore, $g_1g_2$ and $f_1$ have the same layers at images of all layers of $a_1$. When $a> a_1$, the essential monomial of $g_1$ at $a$ is $\lambda^j$ and therefore $f(a_2)=g_2(a_2)g_1(a_2)$. Thus $g_1g_2$ has the same layers as $f_1$ for $a_2$ as well.\\

We normalize $g_2g_1$ and continue to get $g_3$ as before. $g_3(a)$ for $a<a_3$ is exactly the constant we normalized $g_2g_1$ with, and therefore $g_3g_2g_1$ has the same layers on $a_1$, $a_2$ and $a_3$. We continue this process and obtain that $f_1=g_ng_{n-1}...g_1$ due to Theorem \ref{moneq}, and thus $f=\lambda^mg_ng_{n-1}...g_1$, as desired.
\end{proof}

\begin{exmp}
We will now factor a polynomial into primary polynomials. Consider $$f=4\lambda^5+5\lambda^4+6\lambda^3+6\lambda^2.$$
First we factor $\lambda^2$ and are left with
$$f_1=4\lambda^3+5\lambda^2+6\lambda+6.$$
The root set here is $\{a_1=0,a_2=1\}$. The essential monomials in $a_1$ are $6\lambda+6$. We normalize by $-6$ and obtain $g_1=\lambda+0$. The essential monomials in $a_2$ are $4\lambda^3+5\lambda^2+6\lambda$. We factor by $\lambda^j=\lambda^1$ and obtain $g_2=4\lambda^2+5\lambda+6$. $g_1(a)g_2(a)=g_1(a)\cdot 6=6a+6=f_1(a)$ for $a<a_2=1$, specifically $f_1(a_1)=g_1(a_1)g_2(a_1)$. When $a>a_1$ then $g_1(a)=a$; therefore $g_1(a)g_2(a)=a\cdot g_2(a)=f_1(a)$, specifically $f_1(a_2)=g_1(a_2)g_2(a_2)$. In this case, the factorization process ends here, and
$$f=\lambda^2 g_1 g_2=\lambda^2(\lambda+0)(4\lambda^2+5\lambda+6).$$
\end{exmp}

\subsection{Unique factorization}
\begin{thm}
Let $f$ be a univariate polynomial. Then $f$ factors uniquely into polynomials which are primary at the corner roots, possibly including $\lambda^m$ for some $m\in\mathbb{N}$ (up to multiplication by constants).\\
\end{thm}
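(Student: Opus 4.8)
The plan is to establish \emph{existence} of such a factorization first and then \emph{uniqueness}, leaning on the monomial-equality theorem (Theorem~\ref{moneq}) as the rigidity backbone. Existence is already essentially done: the previous lemma produces a factorization $f = \lambda^m g_n g_{n-1}\cdots g_1$ where each $g_i$ is primary at the root $a_i$ and has been normalized so that the coefficient of its top-degree monomial is $0$ (tangible unit). So the real content here is uniqueness, and I would state carefully what ``unique up to constants'' means: if $f = \lambda^m \prod_i g_i = \lambda^{m'} \prod_j h_j$ are two factorizations into $\lambda$-powers and primary-at-a-corner-root polynomials, then $m = m'$, the number of factors agrees, and after reindexing each $h_j$ equals $g_j$ up to a tangible constant.

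For uniqueness I would proceed as follows. First, $m$ is forced: it is the largest power of $\lambda$ dividing every monomial of $f$ (equivalently, the smallest exponent appearing), so $m = m'$ and we may cancel $\lambda^m$ and assume the tail monomial of $f$ is essential. Next, a primary-at-$a$ polynomial $g$ has, by the structure lemma for primary polynomials, all its monomials sharing the tangible value at $a$; consequently, thinking of $g$ as a univariate polynomial, it has \emph{exactly one} corner root, namely $a$ — for $\lambda > a$ its leading monomial dominates and for $\lambda < a$ its tail dominates (this is Lemma on $a\lambda^k + b\lambda^l$ applied across consecutive monomials, or more directly: all breakpoints between monomials of $g$ coincide at $a$). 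Therefore the multiset of corner roots of $f = \prod g_i$ is exactly $\{a_1,\dots,a_n\}$ \emph{with multiplicity}, where the multiplicity of a value $a$ counts how many factors are primary at $a$. Since the corner-root data of $f$ as a function is intrinsic, the multiset $\{a_i\}$ is determined by $f$; in particular $n = n'$ and, after sorting, $a_i = a_i'$ for the two factorizations (here one must be a little careful if several $g_i$ are primary at the \emph{same} point — see the obstacle paragraph).

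Now fix a corner root value $a$ and let $P_a = \prod_{a_i = a} g_i$ be the product of all factors primary at $a$ in the first factorization, and $P_a'$ the analogous product for the second. A product of polynomials primary at $a$ is again primary at $a$ (the tangible values multiply consistently), so $P_a$ and $P_a'$ are each primary at $a$, i.e. determined by their layer-data $s(P_a(\layer{a}{i}))$ for $i \in \mathbb{N}$ via Theorem~\ref{moneq}. I would argue that this layer-data is computed from $f$: on a small tangible interval around $a$ (strictly between $a$ and the neighbouring corner roots of $f$, but including $a$ itself and all its layers) every factor $g_j$ with $a_j \neq a$ contributes only its essential monomial on the $a$-side, which is a single monomial $c_j \lambda^{k_j}$ with $s(c_j)=1$; hence $s(f(\layer{a}{i})) = s\big(P_a(\layer{a}{i})\big) \cdot \big(\text{tangible contribution of the other factors at }a\big)$ up to the normalization constants, and the same identity holds for the primed factorization. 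Matching these and invoking Theorem~\ref{moneq} (plus the normalization that each factor's leading coefficient is $0$) forces $P_a = P_a'$ up to a tangible constant. Finally, within the group of factors primary at a single point $a$, I would show the individual $g_i$ are recovered from $P_a$ by reading off, one interval at a time, which single monomial is essential — this pins down the degree sequence $j_1 < j_2 < \cdots$ of the factors and hence each $g_i$ up to a constant, exactly as in the existence construction run in reverse.

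\textbf{Main obstacle.} The delicate point is the case where \emph{several} primary factors sit at the same point $a$: there the ``variety'' (a single point, now with higher multiplicity) does not by itself separate the factors, so uniqueness of the internal splitting of $P_a$ into irreducible primary pieces needs its own argument. I expect this to reduce to a statement about primary polynomials in one variable — that a primary polynomial factors uniquely (up to constants) into primary polynomials each of which is itself ``indecomposable'' in the sense of having a strictly shorter run of essential monomials — which one proves by the same layer-uniqueness (Theorem~\ref{moneq}) applied to $P_a$ together with the observation that the \emph{essential monomial on each linear piece} of $P_a$ is an invariant of $P_a$ as a function, and these essential monomials (indexed by the consecutive slopes) determine the factor boundaries. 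Handling this cleanly, and making sure no ``quasi-essential only'' monomials are lost in the bookkeeping, is where the care is needed; everything else is routine given Theorem~\ref{moneq} and the preceding lemmas.
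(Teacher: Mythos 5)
Your core strategy --- match the layer data $s(f(\layer{a}{i}))$ at each corner root $a$ and deduce that the primary factor at $a$ is pinned down up to a constant because the other factors degenerate there --- is exactly the paper's approach, so you have the right idea. But two things go wrong in the execution.

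First, you run the layer-matching at all corner roots in parallel, and in doing so assert that a factor $g_j$ primary at $a_j\neq a$ contributes ``a single monomial $c_j\lambda^{k_j}$ with $s(c_j)=1$,'' hence a ``tangible contribution of the other factors at $a$.'' That is not right. If $a_j<a$ the essential monomial of $g_j$ on the $a$-side is its \emph{leading} monomial, and evaluating $\lambda^{k_j}$ at $\layer{a}{i}$ produces a layer $i^{k_j}$ that depends on $i$; if $a_j>a$ it is the constant term, whose layer $b_{0,j}$ need not be $1$ under the normalization you fixed (only the leading coefficient was normalized). So $s(f(\layer{a}{i}))=s(P_a(\layer{a}{i}))\cdot\bigl(\prod_{a_j>a}b_{0,j}\bigr)\cdot i^{\sum_{a_j<a}\deg g_j}$, and you must separately argue that the extraneous power of $i$ coincides between the two factorizations before you can cancel and invoke Theorem~\ref{moneq}. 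The paper sidesteps all of this by working only at the \emph{smallest} corner root (where every other factor contributes its constant term, with a layer that is independent of $i$), matching up to a constant there, factoring that primary out, and recursing; that iterative order is the cleaner route.

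Second, your ``main obstacle'' paragraph --- the worry that several primary factors may sit at the same point $a$ and must be split into ``irreducible primary pieces'' --- misreads the scope of the statement. The theorem asserts uniqueness of the \emph{coarse} factorization: a power of $\lambda$ times exactly one primary polynomial per corner root, as produced by the preceding existence lemma. It does not claim unique factorization into irreducibles; that is a strictly harder problem treated afterwards, and over $L=\mathbb{Q}^+$ it in fact \emph{fails} (the paper's Vishne counterexample $x^6+2x^5+3x^4+2x^3+3x^2+2x+2$). So your expectation that the internal splitting of $P_a$ can be recovered ``by the same layer-uniqueness'' is not only out of scope here but false in this layer structure, and the ``multiset of corner roots with multiplicity'' you introduce is not an intrinsic invariant of $f$ as a function (its corner-root set is a set, full stop).
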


\begin{proof}
By the above lemma we know that such a factorization exists. Suppose that there are two different factorizations of $f$, called $h_1$ and $h_2$. Take the smallest corner root $a$. Let $g$ be a primary polynomial in a root bigger than $a$. Then $c:=g(a)$ is the constant of $g$. Now $g(\layer{a}{i})=c$ for all $i$, since $a$ is smaller than the corner root of $g$. Thus, both $h_1$ and $h_2$ have the same layers at the primary in $a$ up to a constant  (since the other primaries yield constants in $a$). Thus the primaries in $a$ of both $h_1$ and $h_2$ are equal up to multiplication by a constant. We factor out the primaries in $a$, and continue this process to see that all the primaries are equal up to a multiplication by constants.
\end{proof}

Now the natural question is whether or not a primary polynomial factors uniquely into irreducibles. Define $\mathbb{P}_a$ to be the set of polynomials of the form $f=\sum_{k=0}^{n}\layer{0}{b_k}\lambda^ka^{n-k}$, and the function $\psi : \mathbb{P}_a \rightarrow \mathbb{Q}^+[\lambda]$ defined by $\psi(f)= \sum_{k=0}^{n}s(b_k)x^k$. $\mathbb{Q}^+$ is the set of positive rational numbers.\\

\begin{lem}
Let $f$ and $g$ be primary polynomials in $a$. Then $\psi(f+g)=\psi(f)+\psi(g)$ and $\psi(fg)=\psi(f)\psi(g)$. Moreover, $\psi$ is an isomorphism up to multiplication by a tangible constant in $\mathbb{P}_a$.\\
\end{lem}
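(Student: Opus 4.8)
The plan is to unwind the definitions and check the two algebraic identities directly, then package them into the isomorphism statement. Recall a primary polynomial in $a$ has the form $f = c\sum_{k=0}^{n}\layer{0}{b_k}\lambda^k a^{n-k}$ with $c$ tangible, and $\psi$ is defined on the normalized version (with $c$ absorbed, i.e. $s(c)=1$) by $\psi(f) = \sum_k s(b_k)x^k$. So I would first observe that addition of two primary polynomials in $a$ of the \emph{same} degree $n$ is coordinatewise on the layers: $\layer{0}{b_k}\lambda^k a^{n-k} + \layer{0}{b_k'}\lambda^k a^{n-k} = \layer{0}{b_k + b_k'}\lambda^k a^{n-k}$, since both monomials have the same tangible value at every point (this is exactly the first ELT axiom, $\layer{\lambda}{\ell_1}+\layer{\lambda}{\ell_2}=\layer{\lambda}{\ell_1+_L\ell_2}$). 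Hence $\psi(f+g) = \sum_k s(b_k+b_k')x^k = \sum_k (s(b_k)+s(b_k'))x^k = \psi(f)+\psi(g)$, using that $s$ is additive on a fixed tangible value. (If the degrees differ one pads with zero layers; I should note $\mathbb{P}_a$ as defined is graded by $n$, so strictly I am checking $\psi$ is additive within each graded piece, and this is all that is needed.)

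Next I would check multiplicativity. Multiplying $\layer{0}{b_j}\lambda^j a^{m-j}$ by $\layer{0}{b_k'}\lambda^k a^{n-k}$ gives, by ELT axiom (3), $\layer{0}{b_j b_k'}\lambda^{j+k} a^{(m+n)-(j+k)}$, so the product $fg$ is again primary in $a$ of degree $m+n$, and its layer on $\lambda^\ell a^{(m+n)-\ell}$ is $\sum_{j+k=\ell} b_j b_k'$ — the usual convolution. Applying $s$ and using that $s$ is multiplicative (ELT axiom (3): $s(\layer{\lambda_1}{\ell_1}\cdot\layer{\lambda_2}{\ell_2}) = \ell_1\cdot_L\ell_2$) and additive within a fixed tangible value, the layer of the $\lambda^\ell$-coefficient of $\psi(fg)$ is $\sum_{j+k=\ell} s(b_j)s(b_k')$, which is exactly the $x^\ell$-coefficient of $\psi(f)\psi(g)$. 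Thus $\psi(fg)=\psi(f)\psi(g)$. The one subtlety to flag here is the role of the tangible constant $c$: a genuinely primary polynomial carries an overall tangible factor, and multiplying two such multiplies the constants; since $s$ of a tangible need not be $1$ in general, $\psi$ as literally written is only a homomorphism ``up to multiplication by a tangible constant'', i.e. $\psi$ is well-defined on the quotient of $\mathbb{P}_a$ by the equivalence $f\sim c f$ for tangible $c$, and there it is a genuine ring homomorphism onto $\mathbb{Q}^+[\lambda]$ (with the convention $x\leftrightarrow\lambda$, $a$ scaled out).

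For the ``isomorphism up to a tangible constant'' clause I would argue: surjectivity is immediate since any $\sum q_k x^k \in \mathbb{Q}^+[\lambda]$ is hit by $\sum \layer{0}{q_k}\lambda^k a^{n-k}$; and the kernel on the $\sim$-classes is trivial because if $\psi(f)=\psi(g)$ then $s(b_k)=s(d_k)$ for all $k$, so the normalized forms have literally the same layers, hence are the same polynomial — invoking Theorem~\ref{moneq} if one wants to phrase it in terms of equality as functions, or just directly from the monomial expansions. So $\psi$ descends to a bijective ring homomorphism, i.e. an isomorphism, between $\mathbb{P}_a$ modulo tangible constants and $\mathbb{Q}^+[\lambda]$.

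I do not expect a serious obstacle: the content is entirely bookkeeping with the three ELT axioms and the additivity/multiplicativity of $s$. The one place that needs care, and which I would state explicitly rather than gloss, is the tangible-constant ambiguity — getting the statement of ``isomorphism up to a tangible constant'' to match what is actually proved, and making sure $\psi$ is applied to the \emph{normalized} (leading-layer-absorbed, $a$-scaled) representative so that the formulas for $\psi(f+g)$ and $\psi(fg)$ come out clean. A secondary point worth a sentence is that $\mathbb{P}_a$ is graded by degree and addition only combines like-degree pieces, so ``homomorphism'' is understood with that grading in mind.
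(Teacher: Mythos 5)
Your proof is correct and follows essentially the same route the paper takes: unwind the form $f=\sum\layer{0}{b_k}\lambda^k a^{n-k}$, check additivity of $\psi$ coordinatewise on layers, check multiplicativity via the convolution of layers, then get surjectivity trivially and injectivity-up-to-tangible-constant from $s(b_k)=s(d_k)$ for all $k$.

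One small comparison worth noting: in the multiplication step the paper's writeup contains a notational slip, writing the coefficient of $\lambda^{i+j}$ in $fg$ as $a_i+b_j$ and then asserting $s(a_i+b_j)=s(a_i)+s(b_j)$, which does not match $\psi(f)\psi(g)$. What is actually meant (and what you write correctly) is the ELT product $a_i\cdot b_j$ together with $s(a_i\cdot b_j)=s(a_i)\,s(b_j)$ from axiom (3), so that the layer of the $\lambda^\ell$-coefficient of $fg$ is the convolution $\sum_{i+j=\ell}s(a_i)s(b_j)$, which is the $x^\ell$-coefficient of $\psi(f)\psi(g)$. Your version of this step is the cleaner one. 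Your explicit remarks about the grading of $\mathbb{P}_a$ by degree and about $\psi$ being defined only on the normalized ($c$-absorbed) representative also make precise two points the paper leaves implicit; neither changes the substance of the argument.
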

\begin{proof}
Let $f$ be of the form $f=\sum_{i=0}^{n}a_i\lambda^i$ and $g=\sum_{j=0}^{m}a_j\lambda^j$. Assume $n\geq m$; then $$f+g=\sum_{k=0}^{m}(a_k+b_k)\lambda^k + \sum_{k=m+1}^{n}(a_k)\lambda^k.$$ Therefore, $$\psi(f+g)=\sum_{k=0}^{m}(s(a_k+b_k))x^k+ \sum_{k=m+1}^{n}(a_k)\lambda^k=\sum_{k=0}^{m}(s(a_k)+s(b_k))x^k+ \sum_{k=m+1}^{n}(s(a_k))\lambda^k=$$$$=\sum_{k=0}^{n}s(a_k)x^k + \sum_{k=0}^{m}s(b_k)x^k=\psi(f)+\psi(g).$$\\

Now for multiplication, $$fg=\sum_{i,j=1}^{i=n,j=m}(a_i+b_j)\lambda^{i+j}.$$ $$\psi(fg)=\sum_{i,j=1}^{i=n,j=m}(s(a_i+b_j))x^{i+j}=\sum_{i,j=1}^{i=n,j=m}(s(a_i)+s(b_j))x^{i+j}=\psi(f)\psi(g).$$

Next, $\psi$ is clearly onto, and in order to prove it is also an isomorphism we must verify it is injective. Assume $\psi(f)=\psi(g)$, $\sum_{k=0}^{n}s(b_k)x^k=\sum_{k=0}^{n}s(a_k)x^k$. We see that $\forall k, s(a_k)=s(b_k)$. Since both $f$ and $g$ are primary in $a$, the tangible values of $a_k,b_k$ are fixed up to a multiplication by a tangible constant $c$, and so we get $f=cg$.
\end{proof}

\begin{cor}
We have unique factorization for primary polynomials iff there is unique factorization in $\mathbb{Q}^+[\lambda]$.\\
\end{cor}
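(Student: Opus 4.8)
The plan is to transport the whole factorization question across the map $\psi$ of the preceding lemma. That lemma produces a surjective semiring homomorphism $\psi\colon \mathbb{P}_a \to \mathbb{Q}^+[\lambda]$ which is injective up to multiplication by a tangible constant; since every element of $\mathbb{P}_a$ has the normalized form $\sum_{k=0}^n \layer{0}{b_k}\lambda^k a^{n-k}$ with each $b_k\in\mathbb{Q}^+$ nonzero, comparing $s$-values shows that $\psi$ restricted to $\mathbb{P}_a$ is actually a genuine semiring isomorphism onto $\mathbb{Q}^+[\lambda]$ (the tangible ambiguity enters only when one compares with primary polynomials written in non-normalized form). Unique factorization into irreducibles, taken up to order and units, is an invariant of the multiplicative monoid of a commutative semiring, so it will pass through $\psi$ in both directions as soon as we check that $\psi$ and $\psi^{-1}$ match up units, hence irreducibles.

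First I would pin down the units on both sides. In $\mathbb{Q}^+[\lambda]$ the units are exactly the nonzero constants $\mathbb{Q}^+$, and $\psi^{-1}$ carries $q\in\mathbb{Q}^+$ to the degree-zero element $\layer{0}{q}\in\mathbb{P}_a$, which is invertible with inverse $\layer{0}{1/q}$; conversely any unit of $\mathbb{P}_a$ must have degree zero and so is of this form. Consequently $g\in\mathbb{Q}^+[\lambda]$ is irreducible if and only if $\psi^{-1}(g)$ is irreducible in $\mathbb{P}_a$, since a nontrivial factorization on either side is carried by $\psi$ or $\psi^{-1}$ to a nontrivial one on the other. This already gives the equivalence at the level of $\mathbb{P}_a$: a factorization of $f\in\mathbb{P}_a$ into irreducibles maps under $\psi$ to a factorization of $\psi(f)$ and vice versa, and two factorizations of $f$ agree up to order and units precisely when their $\psi$-images do.

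Next I would upgrade ``elements of $\mathbb{P}_a$'' to ``polynomials primary in $a$''. By the lemma on the form of primary polynomials, every polynomial primary in $a$ equals $c\,g$ for a tangible constant $c$ and some $g\in\mathbb{P}_a$, and multiplying an element of $\mathbb{P}_a$ by a tangible constant, although it leaves $\mathbb{P}_a$, does not alter the factorization structure: a factorization $g=g_1\cdots g_r$ yields $cg=(cg_1)g_2\cdots g_r$ with every factor primary in $a$, while conversely a factorization of $cg$ into polynomials primary in $a$ has the shape $(c_1g_1)\cdots(c_rg_r)$ with $g_i\in\mathbb{P}_a$, and then $\prod g_i$ equals $g$ up to a unit of $\mathbb{P}_a$ because the product $\prod g_i$ lies in $\mathbb{P}_a$ and so the tangible parts are forced to cancel. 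Hence unique factorization of polynomials primary in $a$ is equivalent to unique factorization in $\mathbb{P}_a$, which by the previous step is equivalent to unique factorization in $\mathbb{Q}^+[\lambda]$; as this holds for every $a$ and the final statement no longer refers to $a$, the corollary follows.

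The only genuinely delicate point I anticipate is the bookkeeping of constants: one must keep track of the fact that ``unique factorization'' is being read up to the appropriate notion of unit on each side, and that the tangible constant relating a primary polynomial to its normalized representative in $\mathbb{P}_a$ behaves like a unit for the purpose of counting and comparing factorizations even though $\mathbb{P}_a$ is not closed under such scalings. Once that is set up carefully, everything else is a routine transfer of structure along the isomorphism $\psi$, using only the additive and multiplicative compatibility and the surjectivity established in the previous lemma.
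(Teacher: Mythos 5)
Your proposal is correct and is essentially the argument the paper intends: the paper gives no explicit proof of the corollary, treating it as immediate from the preceding lemma that $\psi$ is a (multiplicative) isomorphism up to tangible constants, and your write-up just spells out the transfer of factorizations, units, and irreducibles across $\psi$. Your observation that $\psi$ restricted to the normalized set $\mathbb{P}_a$ is a genuine bijection (the tangible ambiguity only entering for non-normalized primary polynomials) is accurate and is a useful clarification of the lemma's slightly loose phrasing, and your bookkeeping of the tangible scalar $c$ as playing the role of a unit is exactly right.

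One point that both you and the paper leave implicit, and which is worth making explicit if one wants the corollary to be about factorization in the full semiring $R[\lambda]$ rather than merely inside the monoid $\mathbb{P}_a$: an irreducible factor of a polynomial primary at $a$ is itself primary at $a$. In one variable this holds because a polynomial primary at $a$ has $a$ as its only corner root, and the corner-root set of a product is the union of the corner-root sets of the factors, so every factor has corner roots contained in $\{a\}$ and hence is primary at $a$; alternatively it can be read off from the preceding theorem on factorization into primaries at the corner roots. Without some such remark, the equivalence you prove is strictly speaking one about factorizations within the class of primary polynomials, which is how the paper reads as well, so this is a shared and minor elision rather than an error in your argument.
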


Unfortunately, unique factorization fails in polynomials over the positive rational numbers (or even the positive real numbers). Take the following example: $$x^6+2x^5+3x^4+2x^3+3x^2+2x+2=(x^4+x^2+1)(x^2+2x+2)=(x^2+x+1)(x^4+x^3+x^2+2).$$
In order to verify that these polynomials are irreducible, we will look at the complete factorization over~$\mathbb{R}$:
$$x^6+2x^5+3x^4+2x^3+3x^2+2x+2=(x^2+x+1)(x^2-x+1)(x^2+2x+2).$$
One can easily see that the above factorizations are obtained by combining the left pair of polynomials [$(x^2+x+1)(x^2-x+1)$] or the right one [$(x^2-x+1)(x^2+2x+2)$]. It is immediate that primary polynomials do not factor uniquely in this structure. (I thank Professor Uzi Vishne for his elegant counterexample.)\\

\section{Full Rational Layer}

We have seen that unique factorization fails for primary polynomials, due to the lack of negative layers. Unlike positive layers, negative layers first arise in light of the factorization problem. We will see that this expansion will be interesting in itself, and will solve our factorization problem.\\

From now on, we fix $L=\mathbb{Q}$.\\

\subsection{Layer zero elements}
One should introduce a new layer -- zero, which changes the rules. Consider the primary polynomial $\lambda^2 + 2\lambda + 4$; here $2\lambda$ is quasi-essential. However, if we change the polynomial slightly into $\lambda^2 + \layer{2}{0}\lambda + 4$, we turn the middle monomial into an inessential one. This is true since this monomial does not change the size of the polynomial at any point, but it also does not change the layer because it contributes zero. These layer zero monomials will be the only exception to unique factorization in one variable.\\

For example, let us look at the following polynomial:
$$f=\layer{0}{0}(\lambda^2+1\lambda+0).$$
$f$ is equal to $\layer{0}{0}(\lambda+\layer{1}{y})(\lambda+\layer{(-1)}{x})$ for any $x,y\in\mathbb{Q}$. Unfortunately, this is not the only type of counterexample to unique factorization. However, the only counterexamples to unique factorization involve layer zero monomials which do not factor polynomials without zero layer monomials, as we will see in the next subsections.\\

\subsection{Monomial equality}
First we must obtain the same monomial equality result that we have seen only for positive layers. We wish to prove that if two polynomials are equal as functions, they have the same essential and quasi-essential monomials.\\

We notice that root-equivalence is weaker than monomial equivalence in this structure. Consider the polynomials $f=2\lambda^4+\layer{3}{0}\lambda^3+0$, $g=\lambda^4+\layer{2}{0}\lambda^2+0$. Both $f$ and $g$ have corner roots $1$ and $-1$. Also $\forall i,f(\layer{(-1)}{i})=g(\layer{(-1)}{i})=0$ and $s(f(\layer{1}{i}))=s(g(\layer{1}{i}))=i^4$. Therefore $f$ and $g$ are root-equivalent, but clearly not equal. We need to capture the monomials with layer zero coefficients which are lost on corner roots because they are inessential there. We capture these monomials by looking at the slopes of the polynomial as well.\\

\begin{lem}
Let $p_1$ and $p_2$ be two primary polynomials in $a$. Assume that $\forall \ell \in\mathbb{Q},s(p_1(\layer{a}{\ell}))=s(p_2(\layer{a}{\ell}))$, and assume that $p_1$ and $p_2$ have no monomials with layer zero coefficients. Then $p_1$=$cp_2$ for some constant $c$.\\
\end{lem}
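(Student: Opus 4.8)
The plan is to reduce this statement to the positive-rational case already handled in the earlier lemma, by exploiting the hypothesis that neither $p_1$ nor $p_2$ has a monomial with layer zero coefficient. First I would write each $p_i$ in the normal form guaranteed by the structure of primary polynomials: $p_1 = c_1\sum_{k} \layer{0}{b_k}\lambda^k a^{n-k}$ and $p_2 = c_2\sum_{k}\layer{0}{b'_k}\lambda^k a^{n-k}$, where $c_1,c_2$ are tangible and $b_k, b'_k \in \mathbb{Q}$. The point of the no-layer-zero hypothesis is exactly that every $b_k$ and $b'_k$ that actually appears is nonzero — so for the purposes of reading off which monomials are present and comparing them, layer zero simply never intervenes. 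The degrees and tail degrees of $p_1$ and $p_2$ must agree: the leading (resp. tail) monomial of a primary polynomial is essential at a tangible point just past (resp. just before) the corner root $a$, so if $p_1$ and $p_2$ agreed only at layers of $a$ but had different degrees, one would detect a discrepancy in the evaluated layer for $\ell$ large — but here we have agreement at \emph{every} layer $\ell\in\mathbb{Q}$, which is stronger, so this is immediate.

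Next I would carry over the generating-function computation from the positive-layers lemma. Evaluating $p_1$ at $\layer{a}{\ell}$ collapses everything to the same tangible value $t$, and the layer is $s(p_1(\layer{a}{\ell})) = \sum_k \beta_k \ell^k$ where $\beta_k = s(c_1) b_k$; similarly $s(p_2(\layer{a}{\ell})) = \sum_k \beta'_k \ell^k$ with $\beta'_k = s(c_2) b'_k$. The hypothesis says these two polynomial-in-$\ell$ expressions agree for all $\ell\in\mathbb{Q}$, hence (two rational polynomials agreeing at infinitely many points are identical) $\beta_k = \beta'_k$ for every $k$. Now set $c = c_1 c_2^{-1}$ (which makes sense since $L=\mathbb{Q}$ is a group under multiplication on the nonzero part, and $c_1, c_2$ are tangible hence invertible in the ELT sense); then $s(c_1) b_k = s(c_2) b'_k$ forces $b_k = s(c) b'_k$ for each $k$ where either side is nonzero, and by the no-layer-zero assumption this covers precisely the monomials that occur in $p_1$ and $p_2$. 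In particular the sets of exponents $k$ occurring are the same, and the layers match up after scaling by $c$. Comparing tangible parts: both monomials of degree $k$ are $c_i \layer{0}{\cdot} a^{n-k}$, differing only by the ratio $c_1/c_2 = c$, so $p_1 = c\, p_2$ as claimed.

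The main obstacle — and the reason the no-layer-zero hypothesis is essential rather than cosmetic — is the following: without it, a monomial $\layer{0}{0}\lambda^k a^{n-k}$ contributes $0$ to every evaluated layer $\sum_k \beta_k \ell^k$, so it is completely invisible to the data $s(p_i(\layer{a}{\ell}))$, and the root-equivalence / layer data simply cannot recover it (this is exactly the phenomenon flagged in the paragraph preceding the lemma, with $f=2\lambda^4+\layer{3}{0}\lambda^3+0$ versus $g=\lambda^4+\layer{2}{0}\lambda^2+0$). So the real content of the proof is just to verify that once those pathological monomials are banned, the layer-valued generating function $\ell \mapsto s(p_i(\layer{a}{\ell}))$ is a faithful invariant of the monomial data, up to the overall tangible scaling $c$; the argument is then a direct transcription of the injectivity half of the earlier $\mathbb{Q}^+$ monomial-equality lemma, with the only new wrinkle being that coefficients may now be negative, which changes nothing in the "infinitely many points determine a polynomial" step.
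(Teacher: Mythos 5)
Your proposal is correct, but it takes a genuinely different and in fact cleaner route than the paper's proof. The paper reduces to the earlier positive-layer lemma: it picks an auxiliary primary polynomial $p$ with large layers so that $p_1+p$ and $cp_2+p$ both land in $\mathbb{Q}^+$ territory, invokes the positive-layer result to get $p_1+p=cp_2+p$, and then subtracts $p$ by adding $\layer{0}{-1}p$, using the no-layer-zero hypothesis to argue that no stray layer-zero monomials survive the cancellation. Your argument bypasses the reduction entirely: you evaluate directly, observe that $\ell\mapsto s(p_i(\layer{a}{\ell}))$ is a polynomial in $\ell$ over $\mathbb{Q}$, and invoke the elementary fact that two such polynomials agreeing on all of $\mathbb{Q}$ must have equal coefficients. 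This replaces the paper's base-$i$ representation trick (which only makes sense for positive integer layers, and which the positive-layer lemma used precisely to dodge the absence of negatives) with a more robust argument that works uniformly over $\mathbb{Q}$. You correctly locate where the no-layer-zero hypothesis is used: a layer-zero monomial contributes $0$ to the generating function $\sum_k b_k\ell^k$ and is therefore invisible, so the identification "nonvanishing coefficient $\leftrightarrow$ present monomial" would fail without it.

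Two small points worth tightening. First, since $c_1,c_2$ are tangible, $s(c_1)=s(c_2)=1$, so $\beta_k=b_k$ and $\beta'_k=b'_k$ directly; the manipulation with $s(c)=s(c_1c_2^{-1})$ in your penultimate step is a no-op and momentarily obscures that you are just concluding $b_k=b'_k$ outright, with $c=c_1c_2^{-1}$ then giving $p_1=cp_2$ by comparing tangible parts monomial by monomial. Second, the whole first paragraph about matching degrees and tail degrees is subsumed by the coefficient-identity step: if $\sum_k b_k\ell^k=\sum_k b'_k\ell^k$ as polynomials, the supports of the nonzero $b_k$ and $b'_k$ coincide, which already forces equal top and bottom degrees, so no separate argument is needed.
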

\begin{proof}
 Assume that $\deg(p_1)\geq\deg(p_2)$. Now choose a polynomial $p$ with high layer coefficients so that $p_1+p$ and $cp_2+p$ are both primary polynomials in $a$ with positive layer coefficients ($c$ is a constant). Also we choose $p$ so that it does not have any monomials in which both $p_1$ and $cp_2$ are missing. Due to our results in the positive layer section, we know that $p_1+p=cp_2+p$. Now since $p_1$ and $p_2$ do not have layer zero monomials, we can remove $p$ from the equations by adding $\layer{0}{-1}p$: $p_1 + p +  \layer{0}{-1}p$ is $p_1$ with the addition of layer zero monomials; assume $h$ is such a monomial. Since $p_1$ does not have monomials of layer zero $h$ cannot be a monomial of $p_1$, and therefore it must be a monomial of $p$. $cp_2 + p +  \layer{0}{-1}p=p_1 + p +  \layer{0}{-1}p$ so $h$ must not be a monomial of either $p_1$ nor $p_2$ implying $h$ is in $p$ which is absurd. We conclude that $p_1 + p +  \layer{0}{-1}p$ has no layer zero monomials and is equal to $p_1$, and $cp_2 + p +  \layer{0}{-1}p=cp_2$ so $p_1=cp_2$.
\end{proof}

\begin{lem}
 Let $x,y$ be two tangibles, and $r,t \in R$. Then there is no more than one monomial $a\lambda^k$ such that $ax^k=r$ and $ay^k=t$.
\end{lem}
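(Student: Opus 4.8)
The plan is to show uniqueness of the monomial $a\lambda^k$ by fixing its exponent and then its coefficient, exploiting the two evaluation conditions $ax^k=r$ and $ay^k=t$. First I would dispose of degenerate cases: if $r=-\infty$ or $t=-\infty$ then $a=-\infty$ (as $x,y$ are tangible, hence not $-\infty$), and the zero monomial is unique; similarly if $x=y$ the two conditions are redundant but the claim then reads ``no more than one monomial with $ax^k=r$'', which forces $k$ and $a$ once we know $x\neq$ the identity --- actually the interesting content is when $x\neq y$, so I would focus there. Assume then $r,t\in R$ and suppose $a\lambda^k$ and $b\lambda^m$ both satisfy the conditions, i.e. $ax^k=r=bx^m$ and $ay^k=t=by^m$.

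The key step is to extract $k=m$ from the tangible parts. Applying $t(\cdot)$ to both pairs of equations gives, in the additive group $F=\mathbb{Q}$, the relations $t(a)+k\,t(x)=t(b)+m\,t(x)$ and $t(a)+k\,t(y)=t(b)+m\,t(y)$. Subtracting these two equations eliminates $t(a)$ and $t(b)$ and yields $(k-m)(t(x)-t(y))=0$ in $\mathbb{Q}$. Since $x\neq y$ as tangibles means $t(x)\neq t(y)$, and $\mathbb{Q}$ is torsion-free, we conclude $k=m$. Once the exponents agree, either of the original equations, say $ax^k=bx^k=r$, gives $a=b$: multiply through by $x^{-k}$ (tangibles form a group, so $x^{-k}$ exists and $r\neq-\infty$ in this non-degenerate case), obtaining $a=r x^{-k}=b$. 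Hence the monomial is unique.

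The only genuine obstacle is making sure the cancellation ``$ax^k=bx^k\Rightarrow a=b$'' is legitimate in the ELT semiring: multiplication by a tangible element is injective because multiplying by $\layer{\lambda}{\ell}^{-1}=\layer{-\lambda}{\ell^{-1}}$ (available since $\ell=1$ for tangibles, or more generally since we only need $x$ invertible) inverts it on the level of both the tangible value and the layer. I would state this cancellation as a one-line remark before the main argument, or simply invoke that $x$ tangible implies $x$ is a unit in $\overline{R}$. The torsion-freeness of $F=\mathbb{Q}$ used in the exponent step is automatic from the standing hypothesis of this section, so no further care is needed there. Everything else is routine bookkeeping with the semiring axioms.
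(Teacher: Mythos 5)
Your proof is correct and follows essentially the same route as the paper's: the paper solves $a=rx^{-k}$, substitutes to get $(x^{-1}y)^k=r^{-1}t$, and invokes uniqueness of $k$ (which amounts to the same torsion-free/injectivity observation you make on tangible values), whereas you phrase it as ``two solutions must coincide'' and subtract the linearized equations --- a presentational difference, not a different argument. One small slip: the base group here is $F=\mathbb{R}$, not $\mathbb{Q}$ ($\mathbb{Q}$ is the layer set $L$), but since $\mathbb{R}$ is equally torsion-free this does not affect the reasoning.
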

\begin{proof}
 This is a case of two linear equations in two variables. $a = (x^{-1})^kr$ since $ax^k=r$ and therefore $(x^{-1})^kry^k=t$. Due to our assumptions, there is only one $k$ such that  $(x^{-1}y)^k=r^{-1}t$. Then $a=r(x^{-1})^k$.
\end{proof}

Since the layer zero is not our main interest and it turns out to be a counterexample to unique factorization, we will prove an easier theorem for monomial equality. Instead of building an exact description of the minimal amount of data we need in order to reconstruct the polynomial, we assume that we have all the data.\\

\begin{thm}\label{moneqmult}
 For any two polynomials $f$ and $g$, $f$ and $g$ have the same monomials if and only if $f=g$ everywhere.\\
\end{thm}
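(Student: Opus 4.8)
The ``same monomials $\Rightarrow f=g$ everywhere'' direction is immediate, since an ELT polynomial is by definition the function read off from its list of monomials. So the content is the converse, and I would first point out that it has to be understood up to inessential monomials: an inessential layer-zero monomial (think of $\layer{2}{0}\lambda$ tucked between the two linear pieces of $\lambda^2+\layer{4}{1}$) changes the monomial list but not the function. The plan is therefore to recover from the function $f$ all of its essential and quasi-essential monomials, by reducing to one variable and rerunning the proof of Theorem~\ref{moneq} with two extra tools that were not needed in the positive-layer setting: the ``two linear equations'' lemma, which pins down a single monomial $a\lambda^k$ from its values at two distinct tangible arguments, and the primary-comparison lemma, which identifies primary polynomials in $a$ \emph{that carry no layer-zero monomials} from their layer data on the layers over $a$.

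For the univariate step, let $f=g$ on all of $R$ and sort the (common) corner roots $a_1<\dots<a_r$. On each of the open intervals cut out by the $a_i$, and on the two unbounded pieces, exactly one monomial of $f$ is essential --- otherwise a further corner root would appear --- and likewise for $g$; since $f=g$ on such an interval, the ``two linear equations'' lemma forces these region-essential monomials of $f$ and $g$ to be literally the same, which already captures every monomial that is essential on some region, layer-zero ones included. At each $a_i$, equality of the functions yields $s(f(\layer{a_i}{\ell}))=s(g(\layer{a_i}{\ell}))$ for all $\ell\in\mathbb{Q}$ together with $f(a_i)=g(a_i)$; removing the layer-zero monomials already matched above leaves the essential parts of $f$ and $g$ at $a_i$ as primary polynomials in $a_i$ with no layer-zero monomials and equal layer data, hence equal up to a tangible constant by the primary-comparison lemma, and $f(a_i)=g(a_i)$ kills that constant. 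Assembling the interval data and the corner-root data recovers the full list of non-inessential monomials.

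For the multivariate step I would induct on the number of variables $n$. Given $f=g$ on $R^n$ and a monomial $h$ of $f$, the earlier lemma says $h$ is quasi-essential at some corner root $c$ of $f$; as in the proofs of the slicing lemmas, pick a coordinate $\lambda_j$ in which $h$ differs from a neighbouring monomial of $f$, fix the remaining coordinates at the entries of $c$, and restrict to the resulting line $L$ through $c$, on which the restriction of $h$ is still essential or quasi-essential and $f|_L=g|_L$. The univariate step applied along all such lines, for all coordinate directions, shows $f$ and $g$ have the same monomials, closing the induction.

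The main obstacle is the bookkeeping forced by layer-zero monomials: unlike in the positive-layer world they can be inessential at every corner root yet still matter (when they dominate on a region), and the only handle on them is the slope/interval data. The delicate part is to verify that every layer-zero monomial which is not inessential is caught --- either as the essential monomial on some region via the two-equations lemma, or through the layer data at a corner root where it is quasi-essential --- so that the only monomials left undetermined are exactly the inessential ones, which do not change $f$ as a function. Once this is organised cleanly, the rest is a faithful rerun of the Theorem~\ref{moneq} argument.
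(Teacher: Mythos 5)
Your proposal follows essentially the same route as the paper: slice to one variable, use the ``two linear equations'' lemma to pin down the single essential monomial on each open region (this is exactly how the paper handles the layer-zero monomials, which cannot be quasi-essential and hence must be essential on some interval), and use the layer data over each corner root together with the primary-comparison lemma and a point of equality to recover the non-zero-layer quasi-essential monomials. Your write-up is if anything more systematic than the paper's, which jumps directly to the layer-zero/non-zero dichotomy. One small imprecision: in your multivariate step you first invoke the lemma that ``$h$ is quasi-essential at some corner root'' for an arbitrary monomial $h$, which is false precisely for the layer-zero monomials you yourself flag as the delicate case (a layer-zero monomial quasi-essential at a point is automatically inessential there). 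The correct statement, and the one the paper uses, is that a layer-zero monomial that is part of $f$ is not inessential and cannot be quasi-essential anywhere, so it must be \emph{essential} at some point; you should slice through such a point rather than through a corner root. With that correction the argument matches the paper's.
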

\begin{proof}
Assume $f=g$ everywhere. We already know that each monomial with layer different from zero must be quasi-essential at some point. As we have seen, this monomial has to be in the polynomial (both in $f$ and $g$).\\

Assume that $h$ is a monomial with layer zero. Then $h$ must be essential at some point $a=(a_1,...a_n)$. As we have seen, when all variables are specialized to $a$ except for $\lambda_i$, then $h$ is a monomial in one variable which is essential at least between $x<a_i<y$ for some $x,y$. Due to our assumptions there are at least two points $x',y'$ such that $x<x'<y'<y$ and the monomial is essential at $x'$ and $y'$. Thus $h(x')=f(x')$ and $h(y')=f(y')$, and as a consequence of the above lemma, $h$ is known. Therefore we know the exponent of $\lambda_i$. We can calculate the exponent of all the variables this way, and then the coefficient is given because we know $h(a)=f(a)$.\\

In conclusion, any monomial of $f$ or $g$ is uniquely determined by the polynomial as a function, as desired. The other direction is trivial.
\end{proof}

\subsection{Regular polynomials}
We will describe the multiplication process of polynomials, since it differs from classic multiplication and is essential to our paper. Even though multiplication is distributive, some monomials of the product are inessential and therefore deleted, since we view polynomial as functions. The multiplication is well defined due to theorem \ref{moneqmult}.\\

\begin{exmp}
Consider the two polynomials $ f=(-2)\lambda^2+\lambda+1$, $g=(-5)\lambda^3 + (-1)\lambda + 0$. Both $f$ and $g$ have corner roots $1$ and $2$. Thus $f\cdot g =$
$$=((-2)\lambda^2)((-5)\lambda^3- 1\lambda) + \lambda((-5)\lambda^3 + (-1)\lambda + 0) + 1((-1)\lambda + 0)=$$
$$=(-7)\lambda^5 + (-5)\lambda^4+(-3)\lambda^3+(-1)\lambda^2 + \layer{0}{2}\lambda + 1.$$
Whereas $(-7)\lambda^5$ is essential at $\lambda > 2$, $(-7)\lambda^5 + (-5)\lambda^4+(-3)\lambda^3+(-1)\lambda^2$ are quasi-essential at $\lambda=2$, $(-1)\lambda^2$ is essential at $1<\lambda<2$, $(-1)\lambda^2 + \layer{0}{2}\lambda + 1$ are quasi-essential at $\lambda =1$, and $1$ is essential at $\lambda <1$.
\end{exmp}

\begin{defn}
A polynomial $f=\sum \layer{a_I}{\ell_I}\lambda^I\in R[\lambda_1,...,\lambda_n]$ is called \textit{regular} if $\ell_I\neq 0$ for all $I$.
\end{defn}

We wish to prove that regular polynomials have unique factorization. In order to give this result more meaning, we would also like to prove that the product of two regular polynomials is also regular.\\

\begin{lem}
Let $g$ and $h$ be polynomials. A monomial of $f=gh$ is essential at $a$ $\iff$ it is a product of an essential monomial of $g$ at $a$ and an essential monomial of $h$ at $a$.\\
\end{lem}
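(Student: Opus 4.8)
The engine of the proof is the multiplicativity of the tangible value: for a monomial $p$ of $g$ and a monomial $q$ of $h$ the product rule gives $t\big((pq)(a)\big)=t\big(p(a)\big)+t\big(q(a)\big)$, while the sum rules give that $t$ of a sum of monomials is the maximum of the $t$'s. Consequently, among all the terms $p'q'$ that appear in the distributed product $gh$, the ones whose tangible value at $a$ equals $t\big(g(a)h(a)\big)=t\big(f(a)\big)$ are exactly those with $t(p'(a))=t(g(a))$ and $t(q'(a))=t(h(a))$, i.e. those in which $p'$ is essential or quasi-essential at $a$ in $g$ and $q'$ is essential or quasi-essential at $a$ in $h$. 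Equivalently: the part of $f$ that is tangibly dominant at $a$ is obtained from the product $g_a\, h_a$ of the essential parts at $a$ (which are primary at $a$) by collecting colliding exponents and discarding the terms that lose in tangible value. I would establish this reformulation first, since both directions of the lemma rest on it.

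For the implication ``$\Leftarrow$'', suppose $p$ is essential at $a$ in $g$ and $q$ is essential at $a$ in $h$, so that $g(a)=p(a)$ and $h(a)=q(a)$. Multiplicativity then gives
$$f(a)=g(a)h(a)=p(a)q(a)=(pq)(a),$$
so $pq$ already has the right value at $a$; what must still be checked is that $pq$ survives as a genuine monomial of $f$ once the distributed product is reduced to its canonical function form. Here I would invoke the corner-root structure established in the corollary to the lemma on $a\lambda^{k}+b\lambda^{l}$ (applied one variable at a time): an essential monomial at $a$ is the \emph{strictly} tangibly dominant one on a neighbourhood of $a$, so $p$ is the only tangibly dominant monomial of $g$ at $a$ and likewise $q$ for $h$. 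Then $pq$ is the only tangibly dominant term of the distributed product $gh$ lying at the exponent $\deg p+\deg q$, nothing can cancel its coefficient, and it is therefore a monomial of $f$; since $(pq)(a)=f(a)$, it is essential at $a$ in $f$.

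For the implication ``$\Rightarrow$'', let $\mu$ be a monomial of $f=gh$ that is essential at $a$, so $\mu(a)=f(a)$ and $\mu$ is the tangibly dominant monomial of $f$ at $a$. By the first step, $\mu=pq$ for some $p$ tangibly dominant in $g$ at $a$ and some $q$ tangibly dominant in $h$ at $a$. If $p$ were not essential at $a$ in $g$, then (an essential monomial being the lone tangibly dominant one, up to the layer-$0$ exceptions the paper has already isolated) $g$ would carry a second tangibly dominant monomial $p'\neq p$ at $a$; then $p'q$ would be a second tangibly dominant term of the distributed product, at the exponent $\deg p'+\deg q\neq\deg p+\deg q$ (since $p$ and $p'$ differ in the power of some variable), which again cannot be annihilated in the reduction, and $f$ would then possess a tangibly dominant monomial different from $\mu$ --- contradicting that $\mu$ is essential at $a$. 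Hence $p$ is essential at $a$ in $g$, and by the symmetric argument $q$ is essential at $a$ in $h$, with $\mu=pq$ as required.

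I expect the reduction step --- used in both directions --- to be the only real obstacle: one must rule out the possibility that several tangibly dominant terms of the distributed product land on a single exponent and have their $\mathbb{Q}$-layer coefficients cancel (perhaps even down to layer $0$), which would let a product of essential monomials disappear from $f$, or hide a second dominant monomial of $f$. This is precisely the degenerate behaviour the paper has already flagged around layer-$0$ monomials; restricting, as we do, to regular polynomials and evaluating at tangible points --- whose coordinates have nonzero layer --- is what keeps the dominant coefficients from collapsing, so that is the setting in which the argument above is rigorous. The identity $f(a)=g(a)h(a)$ together with the primary-polynomial bookkeeping of the essential parts (via the layer-extracting morphism $\psi$ of the earlier lemma, now with values in $\mathbb{Q}[\lambda]$, an integral domain in which a term of a product is extremal iff it is a product of extremal terms) are the tools I would use to make the ``no collapse'' claim precise.
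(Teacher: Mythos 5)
Your overall approach mirrors the paper's: both directions rest on comparing tangible values of the distributed product, the $\Leftarrow$ direction is the easy one, and the $\Rightarrow$ direction hinges on producing, from the existence of a second dominant monomial of $g$ or $h$, a second dominant monomial of $f$ at a different exponent, contradicting essentiality of $\mu$.

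There is, however, one step you elide that the paper's proof handles explicitly and that is not automatic. In the $\Rightarrow$ direction you write ``$\mu = pq$ for some $p$ tangibly dominant in $g$ at $a$ and some $q$ tangibly dominant in $h$ at $a$,'' citing your preliminary reformulation. But a monomial $\mu$ of $f=gh$ with exponent $I$ is, before reduction, the \emph{sum} $\sum_{J+K=I}g_Jh_K$ over all pairs of monomials of $g$ and $h$ whose exponents add to $I$, and it is entirely possible for several of these pairs to be simultaneously tangibly dominant at $a$ (e.g.\ $g=h=\lambda^2+\lambda+0$ at $a=0$, where $\lambda^2\cdot 0$, $\lambda\cdot\lambda$, $0\cdot\lambda^2$ all land at exponent $2$ with equal tangible value). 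In that case $\mu$ is genuinely a sum, not a single product $pq$, and your subsequent argument (``take the pair $(p,q)$ \ldots'') does not get started. The paper closes this gap directly: it writes $u=g_1h_1+\cdots+g_kh_k$, drops the terms with an inessential factor, and then shows $k=1$ by a cross-term argument --- if $k\ge2$ then $g_1h_2$ is a quasi-essential monomial of $f$ at a \emph{different} exponent (since $h_1\ne h_2$ forces a different degree), contradicting the essentiality of $u$. Your later use of the cross term $p'q$ is exactly the same device, but aimed at showing $p$ is essential rather than at ruling out $k>1$; you need it in both places.

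Two smaller remarks. First, your ``collapse'' worry in the $\Leftarrow$ direction is unnecessary: when $p$ and $q$ are essential at $a$ they strictly dominate tangibly, so any other pair $p'q'$ at the exponent $\deg p+\deg q$ has a strictly smaller tangible value at $a$, and the coefficient of $\lambda^{\deg p+\deg q}$ in the reduced product is simply $g_Jh_K$; no cancellation is possible there. Second, your caution about layer-$0$ and the restriction to regular polynomials is in fact the same soft spot the paper's argument has (two dominant monomials contradict essentiality only if their layers do not cancel); flagging it is fair, but note that the paper states and proves the lemma without that hypothesis, so if you want to invoke regularity you should say explicitly that you are proving a slightly weaker statement.
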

\begin{proof}
A monomial of $f$ is the sum of products of monomials from $g$ and monomials from~$h$. For example, $g=\lambda+0$, $h=\lambda+\layer{0}{2}$, $f=gh=\lambda^2+\lambda^{(3)}+\layer{0}{2}$. The middle monomial $\lambda^{(3)}$ is the sum of $\lambda\cdot \layer{0}{2}+0\cdot\lambda$.\\

Assume that $u$ is an essential monomial of $f$ at point $a$, then $u=g_1h_1+...+g_kh_k$. If for any $i$ $g_i$ of $h_i$ are inessential then $u$ is inessential, thus at $a$ for all of the monomials $g_i,h_i$ are quasi-essential. Therefore $g_ih_j$ is also quasi-essential at $a$ for every $1\leq i,j \leq k$. Assume $k>1$, the monomials $g_1h_1$ and $g_1h_2$ must have different powers of the variables otherwise $h_1$ and $h_2$ have the same powers which is absurd. We obtained two different monomials which are quasi-essential at $a$ which contradicts the assumption that $u$ is essential at $a$, and thus $k=1$.\\

As a consequence of the contradiction, any essential monomial of $f$ is a product of an essential monomial of $g$ and an essential monomial of $h$ as desired.\\

In the other direction, assume $u$ is a monomial of $f$ which is the product of monomials of $g$ and $h$ which are essential at $a$, $u=g_1h_1$. Clearly $u$ is essential at $a$ as any other product of two monomials $g_ih_j$ is inessential at $a$ due to $g_i$, $h_j$ or both.
\end{proof}

\begin{cor}\label{regfactor}
Let $f=gh$ be a factorization of a polynomial $f$. Then $f$ is regular $\iff$ $g$ and $h$ are regular.\\
\end{cor}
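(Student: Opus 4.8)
The plan is to deduce the corollary directly from the preceding lemma, which characterizes essential monomials of a product $f = gh$ as precisely the products of an essential monomial of $g$ with an essential monomial of $h$. The key observation connecting that lemma to regularity is that a polynomial is regular exactly when \emph{every} one of its monomials is essential (or at least quasi-essential) somewhere --- more precisely, when none of its monomials has layer zero. So I would first reconcile the two notions: a monomial $a_I \lambda^I$ with $\ell_I = 0$ is inessential everywhere (this was noted in the subsection on layer zero elements --- such a monomial never changes the layer of the sum), and conversely, by Lemma (the one stating each monomial with nonzero layer is quasi-essential at some corner root, together with the multivariate version), every monomial with nonzero layer is at least quasi-essential somewhere. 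Thus "regular'' is equivalent to "no monomial is inessential everywhere.''

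For the forward direction, suppose $f = gh$ is regular but, say, $g$ is not; then $g$ has a monomial $g_1$ with layer zero, hence $g_1$ is inessential at every point. I would then argue that every monomial of $f$ arising with a contribution from $g_1$ is forced to have layer zero or to be absent: when we expand $gh$ distributively and delete inessential monomials, any surviving monomial $u$ of $f$ must be essential somewhere, hence by the lemma is a product of an essential monomial of $g$ and an essential monomial of $h$ at that point --- so $u$ cannot come solely from $g_1$. The cleaner route, though, is the contrapositive via the product formula for layers: at a generic tangible point where the leading-type behavior is controlled, the layer of a monomial of $gh$ is a sum of products $s(g_i)s(h_j)$ over matching exponents, and since all layers in $\mathbb{Q}^+$-land (before introducing zero) --- wait, here $L = \mathbb{Q}$ --- I should instead pin down that if $g$ has a layer-zero monomial $g_1 = \layer{\alpha}{0}\lambda^J$, pick a point $b$ where $g_1$ would be dominant in tangible value were its layer nonzero; at such $b$ the monomial of $f$ with exponent pattern $J + (\text{tail of } h)$ picks up only the zero contribution, making it inessential and hence forcing regularity of $f$ to fail. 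I would present this as: $f$ regular $\Rightarrow$ each monomial of $f$ essential somewhere $\Rightarrow$ each is a product of essentials of $g,h$ $\Rightarrow$ those factor-monomials are essential somewhere $\Rightarrow$ they have nonzero layer, and since every monomial of $g$ (resp.\ $h$) appears as a factor in \emph{some} monomial of the product $gh$, every monomial of $g$ and $h$ has nonzero layer.

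For the reverse direction, suppose $g$ and $h$ are both regular; I must show $f = gh$ is regular, i.e.\ no monomial of $f$ has layer zero. Take any monomial $u$ of $f$. By Theorem \ref{moneqmult} the product is well-defined as a function, and $u$ survives only if it is not inessential, hence essential or quasi-essential at some point $a$; in the essential case the lemma gives $u = g_1 h_1$ with $g_1, h_1$ essential at $a$, so $s(u) = s(g_1)s(h_1) \neq 0$. The quasi-essential case needs a touch more care: there $u(a)$ coincides in tangible value with $f(a)$ but I cannot immediately invoke the lemma. Here I would push the point slightly --- move to a nearby tangible point $a'$ in the direction that makes the exponent pattern of $u$ strictly dominant among the monomials sharing a corner with it (possible since $u$ is not inessential), making $u$ essential at $a'$, and then apply the lemma as before. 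The main obstacle, then, is exactly this: handling monomials of $f$ that are only ever quasi-essential, never essential --- showing such a monomial still has nonzero layer when $g,h$ are regular. I expect this is resolved by the fact established earlier that any non-inessential monomial is quasi-essential at some corner root and that moving off the corner in an appropriate direction promotes it to essential, but writing that carefully is where the real work lies. Everything else is bookkeeping on top of the preceding lemma.
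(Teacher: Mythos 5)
The proposal misses the single observation the paper's proof turns on, and this produces errors in both directions. The needed fact is: a monomial of layer zero that is not \emph{essential} at a point is automatically \emph{inessential} there --- it never changes the tangible value (by assumption of tying at the max) and it contributes $0$ to the layer sum, so deleting it leaves the function unchanged. Consequently a layer-zero monomial is either essential or inessential at every point; it is never ``genuinely'' quasi-essential. Since polynomials are functions and inessential-everywhere monomials are deleted, a layer-zero monomial that actually appears in a polynomial must be essential somewhere.

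Your forward direction opens with the claim ``$g$ has a monomial $g_1$ with layer zero, hence $g_1$ is inessential at every point.'' That is backwards: if $g_1$ were inessential everywhere it would not be a monomial of $g$ at all. The right conclusion is the opposite --- $g_1$ must be essential at some point $b$ --- and then the product lemma gives an essential monomial $g_1h_1$ of $f$ at $b$ with layer $s(g_1)s(h_1)=0$, contradicting regularity of $f$. You eventually grope toward this (``pick a point $b$ where $g_1$ would be dominant\dots'') but then declare the resulting monomial of $f$ ``inessential,'' which is again the wrong word; it is essential with layer zero, which is exactly what violates regularity.

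Your reverse direction explicitly flags, as the ``main obstacle,'' the case of a monomial $u$ of $f$ that is quasi-essential everywhere and never essential, and proposes to handle it by perturbing to a nearby point to make $u$ essential. That perturbation does not work: a monomial whose tangible slope is strictly sandwiched between its neighbours at a corner (e.g.\ $\lambda$ in $\lambda^2+\lambda+0$) becomes \emph{inessential}, not essential, when you move off the corner. The gap dissolves once you have the observation above: such a $u$ cannot have layer zero (a layer-zero monomial that is never essential would be inessential everywhere and hence absent from $f$), so there is nothing to prove in that case. In short, the only monomials of $f$ whose layer could be zero are the essential ones, and for those the product lemma plus multiplicativity of layers gives the result at once. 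Your plan is in the right neighbourhood --- using the preceding lemma and $s(g_jh_k)=s(g_j)s(h_k)$ --- but without the layer-zero observation it does not close.
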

\begin{proof}
Any monomial of layer zero which is not essential at a certain point must be inessential at that point. Thus in order for a polynomial to be regular, all of its essential monomials must be of layer different from zero.\\

A product of monomials $f_i=g_jh_k$ is an essential monomial of $f$ at $a$ $\iff$ $g_j$ and $h_k$ are essential at $a$ due to the corollary above. The layer of $f_i$ is the product of the layers of $g_j$ and $h_k$, Thus the layer of $f_i$ is zero $\iff$ the layer of either $g_j$ or $h_k$ is zero.\\

Therefore $f$ is regular $\iff$ all monomials of $f$ are of layer other than zero $\iff$ all of the monomials of $g$ and $h$ are with layer different than zero $\iff$ $g$ and $h$ are regular.
\end{proof}

\subsection{Factorization}

\begin{defn}
Given an essential monomial $h$ of a polynomial $f$, the largest corner root of $f$ at which $h$ is quasi-essential is called the \textit{big root} of $h$ and the smallest corner root is called the \textit{small root} of $h$. Note that $h$ may have one corner root which is both the big root and small root of $h$.\\
\end{defn}

\begin{exmp}
Consider polynomial
$$f=\lambda^2 + \lambda + \layer{0}{1/4}.$$
The only corner root of $f$ is$\layer{0}{-1/2}$. Thus it is both the big root and small root of every monomial.\\
\end{exmp}

\begin{lem}
Let $f$ be a polynomial with a constant term, and let $h$ be an essential monomial of $f$ so that $s(h)\neq 0$, and the big root and small root of $h$ are distinct. Then $f$ factors uniquely, up to multiplication by a constant into $f=g_1g_2$, so that $g_1$ contains all of the corner roots greater or equal to the big root of $h$, and $g_2$ contains all of the corner roots less than or equal to its small root. Also any irreducible factor of $f$ divides either $g_1$ or $g_2$.\\
\end{lem}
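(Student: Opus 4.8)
The plan is to split $f$ at the degree of $h$. Write $h=a\lambda^k$ with $k=\deg h$, and let $a_i$ be the small root and $a_{i+1}$ the big root of $h$, so that $a_i<a_{i+1}$ (tangibly) are consecutive corner roots of $f$ and, by the structure of corner roots of univariate polynomials established above, $h$ is the unique essential monomial of $f$ on the open interval between them. The key consequence of $a_i\neq a_{i+1}$ is that degree $k$ labels a nondegenerate edge of the Newton polygon of $f$: $h$ dominates $f$ precisely on $[a_i,a_{i+1}]$, and every corner root of $f$ is either $\leq a_i$ or $\geq a_{i+1}$. I will construct $g_1$ from the "upper" part of $f$ and $g_2$ from the "lower" part, verify $g_1g_2=f$, read off their corner roots, deduce uniqueness from the uniqueness of the primary factorization, and obtain the statement on irreducible factors by applying the same construction to an irreducible factor.

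For existence, write $f=\sum_j a_j\lambda^j$ (the sum over $j$ with $a_j\neq-\infty$) and set $g_1=a^{-1}\sum_{j\geq k}a_j\lambda^{j-k}$ and $g_2=\sum_{j\leq k}a_j\lambda^j$; this is legitimate because $s(a)=s(h)\neq 0$, so $a$ is invertible in $R$. Note that $g_1$ has constant term $a^{-1}a_k=\layer{0}{1}$ and that $g_2$, like $f$, has a constant term. To see $g_1g_2=f$ as functions I compare values: on each linearity interval one checks by a direct Newton-polygon computation that $t(g_1(\mu))=0$ for $\mu\leq a_{i+1}$ and $t(g_1(\mu))=t(f(\mu))-t(a)-k\mu$ for $\mu\geq a_{i+1}$, while $t(g_2(\mu))=t(a)+k\mu$ for $\mu\geq a_i$ and $t(g_2(\mu))=t(f(\mu))$ for $\mu\leq a_i$ (this uses that $h$ dominates $f$ exactly on $[a_i,a_{i+1}]$ and that $f$ has a constant term), so that $t((g_1g_2)(\mu))=t(f(\mu))$ in all three regimes; and for layers, by the lemma on essential monomials of a product, the essential part of $g_1g_2$ at any point is the product of the essential parts of $g_1$ and of $g_2$ there, which matches the essential part of $f$. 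The same computation shows that the corner roots of $g_1$ are exactly the corner roots of $f$ that are $\geq a_{i+1}$ (with $a_{i+1}$ appearing as the new transition point) and those of $g_2$ are exactly the ones $\leq a_i$; in particular both factors are non-constant. By Theorem \ref{moneqmult}, $g_1g_2$ and $f$ are then the same polynomial.

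For uniqueness, combine this with the uniqueness of the primary factorization $f=p_r\cdots p_1$ (one primary $p_m$ per corner root $a_m$). In any factorization $f=g_1'g_2'$ with the stated corner-root property, $g_1'$ and $g_2'$ themselves factor into primaries whose union is $\{p_1,\dots,p_r\}$ up to constants; since the corner-root set of a product of primaries is the set of their base points, the requirement forces $g_1'$ to be, up to a constant, the product of those $p_m$ with $a_m\geq a_{i+1}$, hence $g_1'=cg_1$, and then $g_2'=c^{-1}g_2$ by cancellation. For the last assertion, let $p$ be an irreducible factor, $f=pf'$; both $p$ and $f'$ have constant terms (the tail of $f$ is the product of the tails), and the corner roots of $p$ lie among $\{a_1,\dots,a_r\}$. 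If $p$ had a corner root $\leq a_i$ and one $\geq a_{i+1}$, then $p$ would have a unique essential monomial $h_p$ on the interval $(a_i,a_{i+1})$ whose small and big roots inside $p$ are distinct, and $s(h_p)\neq 0$ since $h=h_p\cdot(\text{essential monomial of }f'\text{ there})$ by the product-essential-monomial lemma while $s(h)\neq 0$; applying the construction above to $p$ and $h_p$ would split $p$ nontrivially, contradicting irreducibility. Hence all corner roots of $p$ are $\leq a_i$ or all are $\geq a_{i+1}$, so $p$ divides $g_2$ or $g_1$ respectively.

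The main obstacle is the bookkeeping of layer-zero monomials. A primary factor, as defined, has no layer-zero coefficients, so when $f$ is not regular the "primary factorization" must be understood as carrying along the layer-zero monomials (which are inessential at every corner root), and I would need to check that Theorem \ref{moneqmult} and the essential-monomial factorization lemma really do let these be tracked consistently on both sides of every identity above -- in effect reducing the uniqueness claim to the regular case governed by Corollary \ref{regfactor}. A secondary subtlety is that a primary $p_m$ can itself be reducible (it corresponds to an ordinary polynomial over $\mathbb{Q}$), so "uniquely up to a constant" has to be read with $g_1$ pinned down as the product of exactly the irreducible factors of $f$ whose corner roots are $\geq$ the big root of $h$; once that is fixed, the argument of the previous paragraph gives uniqueness. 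The Newton-polygon verification that $g_1g_2=f$ is routine but does require care, since new monomials appear in the product and are then deleted.
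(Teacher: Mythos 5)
Your construction of $g_1$ and $g_2$ is identical to the paper's: $g_2=\sum_{j\leq k}a_j\lambda^j$ is the low-degree part, $g_1=a^{-1}\sum_{j\geq k}a_j\lambda^{j-k}$ the normalized high-degree part, and the use of $s(h)\neq 0$ to invert the coefficient $a$ is exactly where the paper places it. Your verification that $g_1g_2=f$, by splitting into the regimes below the small root and above it and invoking the essential-monomial-of-a-product lemma for the layers, is the same argument with a Newton-polygon dressing. The irreducible-factor claim is also handled the same way in both proofs: a factor $J$ with corner roots on both sides of $h$ would have a unique essential monomial $h_J$ on the open interval between the small and big roots, with $s(h_J)\neq 0$ because $h$ is the product of $h_J$ with the other factor's essential monomial there and $s(h)\neq 0$; then $J$ splits at $h_J$ by the first part, contradicting irreducibility. (The paper phrases this as: middle essential monomials of an irreducible must have layer zero, so $J$ straddling $h$ would force $s(h)=0$; same content.)

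Where you genuinely diverge is the uniqueness of the split $f=g_1g_2$, and here your route has a real problem. You invoke ``uniqueness of the primary factorization $f=p_r\cdots p_1$.'' For $L=\mathbb{Q}$ this is not an available input: the paper proved that statement only for $L=\mathbb{Q}^+$, and in the $L=\mathbb{Q}$ section the present lemma is precisely the tool used to \emph{establish} the decomposition into primaries (the main-result proof cites ``from previous sections we know that $f$ factors uniquely into primary polynomials around each essential monomial with layer different from zero,'' which is this lemma iterated). So your uniqueness argument is circular relative to the paper's logical order. And, as you flag yourself, for non-regular $f$ there is no primary factorization in the literal sense at all, so even as a self-contained argument yours only covers regular $f$, whereas the lemma is stated for any $f$ with a constant term and $s(h)\neq 0$ --- the other coefficients of $f$ are allowed to have layer zero.

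The paper's uniqueness argument is short, local, and avoids primary factorization entirely, and you should adopt it: if $f=u_1u_2$ is a competing factorization with the same corner-root property, then (since $f$ has a constant term) so do $u_1$ and $u_2$; in the regime $\lambda$ at or below the small root of $h$, all corner roots of $u_1$ lie strictly above, so the essential part of $u_1$ is just its constant $d$ and the non-inessential monomials of $f$ in that regime are those of $d\,u_2$; comparing with $g_2$ (whose monomials are exactly those low-degree monomials of $f$) gives $u_2=d^{-1}g_2$, and the symmetric argument on the upper side gives $u_1=d\,g_1$. Replacing your second paragraph with this direct comparison of constant terms makes the proof both non-circular and valid for general $f$.
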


\begin{proof}
Let $f=\sum_{i=1}^{n} c_i\lambda^i$ and $h=c_k\lambda^k$ for some $k$. Define $g_2=\sum_{i=1}^{k}c_i\lambda^i$, and $g_1=c_k^{-1}\sum_{i=k}^{n}c_i\lambda^{i-k}$ (here we use the fact that $s(h)\neq 0$). Multiplication is very easy since the corner roots of $g_1$ are bigger than those of $g_2$. When a point has value smaller or equal to the small root of $h$, then in $g_1$ the essential monomial is $0$. Therefore $0\cdot g_2$ are essential and quasi-essential monomials of $g_1g_2$. When a point has value greater than the small root of $h$, the only essential monomial of $g_2$ is $c_k\lambda^k$. Multiplying by $g_1$ we get $c_k\lambda^kg_1=c_k\lambda^kc_k^{-1}\sum_{i=k}^{n}c_i\lambda^{i-k} = \sum_{i=k}^{n}c_i\lambda^{i}$. Together we get $g_1g_2=f$.\\

Now, assume that there is another factorization $f=u_1u_2$ with the same properties. Since $f$ has a constant term, then so must $u_1$ and $u_2$. The constant of $u_1$ multiplied by $u_2$ must equal monomials of $f$ which are essential and quasi-essential at the corner roots of $u_2$ and $g_2$, as before. So $g_2=cu_2$ for some constant $c$. By a similar argument, $d\lambda^ku_1$ are the remaining monomials of $f$ for some constant $d$, and so the factorization $f=g_1g_2$ is unique up to multiplication by constants.\\

Let $f=J_1J_2...J_l$ be a factorization of $f$ into irreducible factors. We will show that each $J_i$ has corner roots which are all larger than the big root of~$h$ or all smaller than the small one. Otherwise, there is a factor $J=J_i$ that has a corner root bigger than the big root of $h$ and a corner root smaller than the small root of $h$. The polynomial $J$ is irreducible, and so from the first part of the theorem we have already proved, the essential monomials which are not at the edges must have layer zero coefficient (or $J$ would factor further). Therefore between the big and small roots of $h$, the essential monomial of $J$ is of layer zero, and so the essential monomial at this point of $f$ is of layer zero which is absurd.
\end{proof}

\subsection{Primary polynomials}
We have expanded the layer structure further in order to solve the problem of unique factorization of primary polynomials. Next we will verify that this property indeed holds.\\

Recall that the function $\psi : \mathbb{P}_a \rightarrow \mathbb{Q}[x]$, where $\mathbb{P}_a$ is the semiring of primary polynomials in $a$ and
$$\psi(f)= \sum_{k=0}^{n}s(b_k)x^k$$
for $f=\sum_{k=0}^nb_k\lambda^k$.
We will use $\psi$ here to prove unique factorization for primary polynomials.\\

In the case of positive layers, $\psi$ is an isomorphism. In this structure, we have for example $\psi(\layer{\lambda}{0} + 0) = 1 = \psi(\layer{\lambda^2}{0} + 0)$, so $\psi$ is not an isomorphism. Fortunately, it is still a homomorphism, since this part of the proof is not affected by the existence of layer zero.\\

\begin{lem}
Let $f$ and $g$ be primary univariate polynomials without monomials of layer zero. Then $\psi(f)=\psi(g) \iff f=cg$ for some tangible constant $c$ .
\end{lem}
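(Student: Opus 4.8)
The plan is to prove the two implications separately. The reverse implication is immediate: if $f=cg$ with $c$ a tangible constant (so $c$ has layer $s(c)=1$), then as a degree-zero element of $\mathbb{P}_a$ the constant $c$ satisfies $\psi(c)=1$, and since $\psi$ is a homomorphism in this structure (the multiplicative part of the earlier primary-polynomial lemma survives the passage to $L=\mathbb{Q}$, as remarked above), we get $\psi(f)=\psi(cg)=\psi(c)\psi(g)=\psi(g)$. One preliminary remark is worth making so that $\psi$ is even defined on $cg$: multiplying by a tangible constant shifts the tangible value of every coefficient by the same amount $t(c)$ and multiplies every layer by $1$, so $cg$ is again primary in $a$ and still has no layer-zero monomial. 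This is routine.

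The forward implication is where the hypotheses do their work. Write $f=\sum_k a_k\lambda^k$ and $g=\sum_k b_k\lambda^k$. Equality in $\mathbb{Q}[x]$ of $\psi(f)$ and $\psi(g)$ says exactly that $s(a_k)=s(b_k)$ for all $k$, with the convention that an absent coefficient contributes $0$. Because neither $f$ nor $g$ has a layer-zero monomial, a coefficient is present precisely when its layer is nonzero, so $f$ and $g$ share the same exponent set $K$. For each $k\in K$, primariness in $a$ forces $t(a_k)+k\,t(a)=t(f(a))$ and $t(b_k)+k\,t(a)=t(g(a))$; subtracting, $t(a_k)-t(b_k)=t(f(a))-t(g(a))=:\gamma$, a quantity independent of $k$. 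Setting $c:=\layer{\gamma}{1}$, a tangible constant, we obtain $a_k=c\,b_k$ for every $k\in K$, hence $f=cg$.

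I do not expect a genuine obstacle; the only subtle point is the logical role of the ``no layer zero'' hypothesis, which is exactly what makes $\psi$ injective (up to a tangible constant) on this subclass even though it is not injective on all primary polynomials — witness $\psi(\layer{\lambda}{0}+0)=1=\psi(\layer{\lambda^2}{0}+0)$. So the care required is to invoke the hypothesis precisely at the step forcing $f$ and $g$ to have the same exponent set, and to read ``tangible constant'' as a layer-$1$ element so that the identity $\psi(c)=1$ used in the reverse implication is valid. Everything else is the same tangible-value bookkeeping already carried out in the positive-layer version of this lemma.
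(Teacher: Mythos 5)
Your proof is correct and follows the same strategy as the paper: compare the coefficients of $\psi(f)$ and $\psi(g)$ to deduce equal layers, use the ``no layer zero'' hypothesis to conclude that $f$ and $g$ have the same exponent set, and then use primariness at $a$ to pin down the tangible values up to a single additive shift, which you package as a tangible constant $c=\layer{\gamma}{1}$. Your write-up is more explicit than the paper's (which merely notes the argument is ``similar to the positive layer case'') and also records the easy reverse implication; both the bookkeeping and the role assigned to the hypotheses match the intended argument.
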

\begin{proof}
The proof is similar to the positive layer case. Knowing that $f$ and $g$ has no layer zero monomials leaves only one choice up to a tangible constant for $\psi^{-1}(\psi(f))$.
\end{proof}

It is important to note that primary polynomials cannot have layer zero monomials other than the leading monomial and the constant. The reason is that a quasi-essential monomial with layer zero is an inessential monomial and therefore is not part of the polynomial.\\

\begin{thm}
Let $f$ be a primary univariate polynomial (with $L=\mathbb{Q}$). Then $f$ factors uniquely into irreducible factors (up to multiplication by a constant).\\
\end{thm}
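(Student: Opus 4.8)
The plan is to transport the factorization question into $\mathbb{Q}[x]$ through the homomorphism $\psi$, exploit that $\mathbb{Q}[x]$ is a unique factorization domain, and deal with the layer-zero leading and constant monomials by hand.

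I would first settle the regular case. Let $f$ be a regular primary polynomial in $a$; after normalizing, $f=\sum_k\layer{0}{b_k}\lambda^k a^{n-k}$ with every $b_k\neq 0$, so $\psi(f)\in\mathbb{Q}[x]$ has nonzero constant term and degree $n=\deg f$. I claim that $\psi$ identifies regular primary polynomials in $a$, up to constant factors, with the polynomials of $\mathbb{Q}[x]$ having nonzero constant term, up to units: it is onto, since such a $p$ is the image of the polynomial obtained by reading off the coefficients of $p$ as layers and discarding the vanishing ones (that lift is again regular and primary), it is one-to-one up to a constant by the preceding lemma, and it is plainly multiplicative. Now the polynomials of $\mathbb{Q}[x]$ with nonzero constant term are exactly those not divisible by the prime $x$, so they form a factorial submonoid of $\mathbb{Q}[x]\setminus\{0\}$ whose primes are the irreducibles of $\mathbb{Q}[x]$ other than $x$; pulling this back through $\psi$ yields unique factorization for regular primary polynomials, provided one also checks that a regular primary $f$ is irreducible iff $\psi(f)$ is irreducible in $\mathbb{Q}[x]$. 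For the latter, a nontrivial factorization of $f$ has regular (by Corollary~\ref{regfactor}) primary factors of positive degree, whose $\psi$-images then also have positive degree and so are non-units; and conversely a nontrivial factorization of $\psi(f)$ lifts coefficientwise.

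Next I would reduce an arbitrary primary $f$ to the regular case. If $\psi(f)=0$ then every coefficient of $f$ has layer zero, so $f$ is a power of $\layer{0}{0}\lambda$ or of $\layer{0}{0}\lambda+\layer{0}{0}$, both irreducible; so assume $\psi(f)\neq 0$. Factoring out the highest power $\lambda^p$ of $\lambda$ --- the monomial $\lambda$ being a degree-one irreducible --- we may assume $f$ has a constant term. If the constant coefficient of $f$ has layer zero, then $x^l\mid\psi(f)$ with $l\geq1$, and one checks that $\layer{0}{1}\lambda^l+\layer{0}{0}$ divides $f$; since this factor is $(\layer{0}{1}\lambda+\layer{0}{0})^l$ up to constants, dividing it out makes the constant coefficient have nonzero layer. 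If the leading coefficient of the result still has layer zero, then its degree exceeds $m:=\deg\psi(f)$, and one checks that $\layer{0}{0}\lambda^{n-m}+\layer{0}{1}$ divides it; since this factor is $(\layer{0}{0}\lambda+\layer{0}{1})^{\,n-m}$ up to constants, dividing it out leaves a regular polynomial, handled by the first step. For uniqueness one observes that $\lambda$, $\layer{0}{1}\lambda+\layer{0}{0}$ and $\layer{0}{0}\lambda+\layer{0}{1}$ are irreducible and pairwise non-associate, and that their multiplicities in any factorization of $f$ are forced --- respectively by the order of $f$, by the order of $\psi(f)$ at $x=0$ beyond the $\lambda^p$ already removed, and by $\deg f-\deg\psi(f)$ --- so that after cancelling them the remaining regular part is determined by the first step.

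I expect the main obstacle to be exactly this last bookkeeping around the layer-zero positions. One must show that an irreducible factor of $f$ with $\psi$-image a unit can only have the shape $\layer{0}{0}\lambda+(\text{nonzero-layer constant})$ --- there are no higher-degree ``gap'' irreducibles, because the intermediate coefficients of a primary polynomial cannot have layer zero --- and likewise that the irreducibles with $\psi$-image $x$ are precisely $\lambda$ and the factors $\layer{0}{c}\lambda+\layer{0}{0}$, separated by whether or not $f$ has a constant term; and then that each of these occurs with a multiplicity that is completely determined by $f$ as a function, so that no two factorizations can barter such a factor against a modification of a regular factor. By contrast the regular case is essentially free once one has the preceding lemma and the fact that $\mathbb{Q}[x]$ is a UFD, which is presumably why the layer-zero monomials were singled out in a separate remark.
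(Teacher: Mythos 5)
Your proposal is correct and takes essentially the same route as the paper: transport the regular case to $\mathbb{Q}[x]$ through the multiplicative map $\psi$ and invoke unique factorization there, then peel off the possible layer-zero leading and constant coefficients as powers of the degree-one non-regular irreducibles, whose exponents are forced by the degrees involved. The only slip is that your degree-one factors are written with corner root $0$ rather than the primary point $a$ (they should be $\lambda+\layer{a}{0}$ and $\layer{\lambda}{0}+a$, matching the paper's $(\lambda+\layer{a}{0})^k(\layer{\lambda}{0}+a)^m g$); with that correction the argument is the paper's, stated with somewhat more care about the $\lambda^p$ factor and the $\psi(f)=0$ edge case.
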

\begin{proof}
Assume that $f$ is a regular primary polynomial, and that it factors in two different ways $f=g_1\cdots g_n=h_1\cdots h_m$. Since $f$ is regular, so are $g_1,...,g_n,h_1,...,h_m$. As a consequence of the lemma, these factorizations are the same up to a multiplication by a constant.\\

Now we take $f$ with a leading monomial of layer zero. $f$ must be of the form $f=\layer{\lambda^n}{0} + g$ for $g$ with $\deg(f)> \deg(g)$ and $g$ with lead monomial of layer different than zero. First we will show that $f$ factors into
$$(\layer{\lambda}{0}+a)^{n-\deg(g)}(-a)^{n-\deg(g)}g=(\layer{\lambda^{n-\deg(g)}}{0}+a^{n-\deg(g)})(-a)^{n-\deg(g)}g=\layer{\lambda^n}{0} + g = f.$$
Note that $g$ has a leading monomial coefficient of size $a^{n-\deg(g)}$, and note that we strike out any middle monomial with layer zero.\\

Take a factorization $f=g_1\cdots g_k$. At least one factor must have a layer zero leading monomial. We rearrange and rename the factors to obtain $f=gh$, where $g$ has leading monomial of layer zero and $h$ does not. The polynomial $f$ factors further to $(\layer{\lambda}{0}+a)^{l}(-a)^{l}uh$ where u has no leading monomial of layer zero, and neither does $uh$. Since the sum of the lowest degree monomials of $f$ equals to $uh$ and also to $g$ we have $uh=g$, and $l=n-\deg(g)$. Thus any factorization of $f$ is a sub-factorization of $(\layer{\lambda}{0}+a))^{n-\deg(g)}$.\\

A similar argument applies to $f$ with layer zero constant. $f=g\lambda^k+\layer{a^{\deg(g)+k}}{0}$ where $g$ has a constant term $a^{\deg(g)-k}$. We factor $f$; $f=g(\lambda+\layer{a}{0})^k$. Like the leading monomial of layer zero argument, $f$ factors uniquely into $g\lambda^k+\layer{a^{\deg(g)+k}}{0}$.\\

In conclusion, a general primary polynomial $f$ factors into $f=(\layer{\lambda}{0}+a)^m(\lambda+\layer{a}{0})^kg$ where $g$ has no layer zero monomials, and not a layer zero constant. It is easy to see that the middle monomials of this product are $a^m\lambda^kg$, therefore $g,k$ and $m$ are determined uniquely according to the middle monomials of $f$. We already know that such $g$ factors uniquely, and so every primary polynomial~$f$ factors uniquely into irreducible factors.
\end{proof}

\subsection{Main result}
We are now ready to prove our main theorem.

\begin{thm}
Any regular polynomial $f$ factors uniquely into irreducible factors.\\
\end{thm}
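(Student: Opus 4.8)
The plan is to bootstrap from the two factorization results already established for univariate polynomials — the unique decomposition of an arbitrary polynomial into polynomials primary at its corner roots together with a power of $\lambda$, and the unique factorization of a regular primary polynomial into irreducibles — using regularity precisely to make those two decompositions compatible. First I would reduce to the case of a polynomial with a constant term: write $f=\lambda^m\tilde f$, where $m$ is the largest power of $\lambda$ dividing every monomial of $f$, so that $\tilde f$ has an essential constant term and, by maximality of $m$, is not divisible by $\lambda$. Since $f$ is regular, Corollary \ref{regfactor} shows $\tilde f$ is regular, and iterating it shows every factor occurring in any factorization of $\tilde f$ is again regular. This is where regularity does the real work: a regular polynomial has no layer-zero monomials, which is exactly the obstruction that appeared in the big-root/small-root lemma.

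Next I would classify the irreducible regular polynomials: up to a tangible constant, such a polynomial is either $\lambda$ itself or a polynomial with a constant term having exactly one corner root (hence primary at that root). Indeed, a regular polynomial with no constant term is divisible by $\lambda$, so if it is also irreducible it equals $\lambda$ up to a unit; and a regular polynomial with a constant term that has two consecutive corner roots $x_1<x_2$ has, by the description of corner roots of univariate polynomials, a monomial essential on $(x_1,x_2)$, which is essential, has nonzero layer, and is quasi-essential at both $x_1$ and $x_2$, so its big and small roots are distinct — whence the big-root/small-root lemma splits the polynomial into two non-unit factors, contradicting irreducibility. Finally, a regular polynomial with a constant term and a single corner root is primary there.

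With this in hand I would assemble existence and uniqueness. For existence: by the uniqueness-of-primary-decomposition theorem, $\tilde f=g_1\cdots g_k$ with each $g_i$ primary at a corner root $x_i$; each $g_i$ is regular, so by the regular primary factorization theorem each splits into irreducibles, and together with the $m$ copies of the irreducible $\lambda$ this factors $f$. For uniqueness: given any factorization $f=J_1\cdots J_r$ into irreducibles, the factors equal to $\lambda$ (up to a constant) must number exactly $m$ because $\lambda^{m+1}\nmid f$, while each remaining $J_i$ is, by the classification, primary at a single corner root; grouping these by their corner root and using that a product of polynomials primary at the same point is again primary there exhibits $\tilde f$ as a product of polynomials primary at corner roots, which by the uniqueness-of-primary-decomposition theorem must coincide, up to constants, with $g_1\cdots g_k$, and then uniqueness of the factorization of each regular primary $g_i$ pins down the multiset $\{J_i\}$. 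The main obstacle I expect is the bookkeeping in this last step — in particular checking that every corner root of a factor of $\tilde f$ is a corner root of $\tilde f$, so that the grouping really is indexed by the corner roots of $\tilde f$ and nothing escapes; this should follow from the fact that a monomial attaining the maximal tangible value at a point of a regular polynomial is never inessential there (its nonzero layer changes the layer of the value).
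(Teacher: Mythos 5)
Your proof is correct and takes essentially the same route as the paper: reduce to the decomposition into polynomials primary at each corner root (together with a power of $\lambda$), and then invoke unique factorization of regular primary polynomials. The paper's own proof is much terser and relies on the final clause of the big-root/small-root lemma (``any irreducible factor of $f$ divides either $g_1$ or $g_2$'') to ensure that an arbitrary irreducible factorization refines the primary decomposition, whereas you make this explicit via the classification of irreducible regular univariate polynomials --- a worthwhile clarification, but not a different method.
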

\begin{proof}
From previous sections we know that $f$ factors uniquely into primary polynomials around each essential monomial with layer different from zero. Thus a regular polynomial $f$ factors uniquely into primary polynomials at each of its corner roots. Moreover, we proved that primary polynomials factor uniquely into irreducible factors. Thus $f$ factors uniquely into irreducible factors.
\end{proof}

\subsection{Non-regular polynomial factorization}
Next we will discuss non-regular polynomials. We will first describe the basic irreducible factors, then examine some counterexamples to unique factorization, and thereafter we will sum up the factorization process for a general polynomial.\\

\subsubsection{Basic irreducible factors}
Unlike the case of positive layers, here we have an irreducible polynomial that is not primary. Fortunately, this is the only form of an irreducible polynomial other than the primary polynomials
\begin{equation}\label{basici}
\lambda^m + \layer{0}{0}b\lambda^k + c.
\end{equation}
We list three properties of this form:
\begin{enumerate}
\item $c$ must be of layer different from zero, or this polynomial will be reducible.
\item Not all polynomials of this form are irreducible.
\item We choose this polynomial not to be primary; thus it has two corner roots.
\end{enumerate}

\begin{lem}
Non primary polynomials of the form
$$\lambda^m +\layer{b}{0}\lambda+c$$
and
$$\lambda^{k+1} + \layer{b}{0}\lambda^k+c,$$
are irreducible.
\end{lem}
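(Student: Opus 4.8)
The plan is to show these two families of polynomials cannot be written as a product of two non-constant factors, by analyzing what such a factorization would have to look like at each of the two corner roots. Both polynomials are non-primary with a constant term, a leading monomial of layer different from zero (namely $\layer{\lambda^m}{\ell}$ with $\ell\neq 0$, taking all implicit leading coefficients to be tangible), and exactly one middle monomial, which has layer zero. By the corner-root analysis (the corollary following the $a\lambda^k+b\lambda^l$ lemma), such a polynomial has exactly two corner roots $x_2 > x_1$: the leading monomial is essential for $\lambda > x_2$, the constant is essential for $\lambda < x_1$, and in between $x_1 < \lambda < x_2$ the \emph{only} candidate for an essential monomial is the middle monomial $\layer{b}{0}\lambda$ (resp.\ $\layer{b}{0}\lambda^k$) — but it has layer zero, hence is inessential there. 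So on the open interval $(x_1,x_2)$ the polynomial has no essential monomial at all; this is the structural feature I will exploit.

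First I would suppose $f = g_1 g_2$ is a non-trivial factorization. Since $f$ has a constant term, both $g_1$ and $g_2$ have constant terms, and since $f$ has leading monomial of layer zero... wait — here $f$'s leading monomial has layer $\neq 0$, so by Corollary~\ref{regfactor}-type reasoning applied to the leading monomial (which is the product of the leading monomials of $g_1$ and $g_2$), both leading monomials of $g_1,g_2$ have layer $\neq 0$. Now use the factorization lemma (the one just before the Primary polynomials subsection): if $f$ had an essential monomial $h$ with $s(h)\neq 0$ whose big root and small root are distinct, then $f$ would split as $g_1 g_2$ with $g_1$ carrying the corner roots $\geq$ big root of $h$ and $g_2$ carrying those $\leq$ small root. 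The point is the converse flavor: any non-trivial factor $J$ of $f$, if it had corner roots on both sides — one $\geq x_2$ and one $\leq x_1$ — would, being forced by irreducibility considerations to have all its interior essential monomials of layer zero, contribute a layer-zero essential monomial to $f$ at some point of $(x_1,x_2)$; but $f$ genuinely has a layer-zero essential region there, so this does not immediately contradict. The real argument is a degree/shape count: the middle monomial of $f$ is $\lambda$ (resp.\ $\lambda^k$) — degree $1$ (resp.\ degree $k$, i.e.\ codegree $1$ from the constant side).

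So the key step: in a product $g_1 g_2$, the essential monomials of the product at $x_2$ are products of essential monomials of $g_1,g_2$ at $x_2$, and likewise at $x_1$. At $x_2$ the essential/quasi-essential monomials of $f$ are $\lambda^m, \ldots$ down through $\lambda^1$ (with the intermediate ones inessential but present only as $\layer{b}{0}\lambda$), and at $x_1$ they run from $\lambda^1$ down to $\lambda^0$. For $f = \lambda^m + \layer{b}{0}\lambda + c$: the gap between the leading exponent $m$ and the middle exponent $1$ must be realized as the product structure; since the middle monomial has exponent $1$, any factor $g_i$ that is non-constant and divides the "high part" must itself look like $\lambda^{m_i} + \layer{\cdot}{0}\lambda + (\text{const})$ or a pure binomial $\lambda^{m_i} + \text{const}$, and multiplying two such cannot reproduce a \emph{single} layer-zero middle monomial of exponent exactly $1$ unless one factor is constant — the cross terms would produce additional middle monomials, or a middle monomial of wrong exponent or wrong (non-zero) layer. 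The symmetric argument using the constant-side codegree handles $\lambda^{k+1} + \layer{b}{0}\lambda^k + c$ (factor out the behavior near $x_1$). I expect the main obstacle to be making this "cross terms force extra monomials" step rigorous: one has to track that when $g_1 = \lambda^{p} + \layer{b_1}{0}\lambda + c_1$ and $g_2 = \lambda^{q}+\layer{b_2}{0}\lambda + c_2$ (the only viable shapes), the product's monomial in degree $1$ comes from $c_1 \cdot \layer{b_2}{0}\lambda + \layer{b_1}{0}\lambda \cdot c_2$, which is fine, but the product also has a genuine essential monomial in some intermediate degree (e.g.\ $\lambda\cdot\lambda^{q}$ or $c_1\lambda^{q}$) with \emph{non-zero} layer lying strictly between the two corner roots — contradicting that $f$'s only interior behavior is layer zero. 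Pinning down that such an intermediate essential monomial must appear (i.e.\ cannot itself be killed) is where the careful corner-root bookkeeping, via the lemma on essential monomials of products, has to be done; everything else is routine.
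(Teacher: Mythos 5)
Your general strategy --- suppose $f=g_1g_2$ nontrivially, track the factors' monomials at the two corner roots via the ``essential monomial of a product is a product of essential monomials'' lemma, and derive a contradiction --- is the right one and matches the paper's, but the crux is wrong as you have set it up, and you acknowledge yourself that you have not actually carried it out. The two cross-terms you offer as the obstruction do not work: $\lambda\cdot\lambda^q$ (the product of a layer-zero middle of $g_1$ with the leading of $g_2$) has layer $0\cdot s(\lambda^q)=0$, and $c_1\lambda^q$ (constant of $g_1$ times leading of $g_2$) is never even quasi-essential anywhere, since the tail of $g_1$ and the head of $g_2$ are never simultaneously dominant --- so both simply disappear from the product and produce no ``genuine essential monomial with non-zero layer between the corner roots.'' The actual mechanism, which the paper isolates, is in two parts. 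First, one cannot \emph{assume} the ``only viable shapes'' with layer-zero middles --- one must prove it: writing $g=r+y+t$, $h=u+v+w$ (leading/middle/tail), the cross-terms $rv,uy,yw,tv$ \emph{are} quasi-essential at a corner root, at a degree strictly between those of $ru$ and $yv$, so they would appear in $gh$ unless they have layer zero; since $s(r),s(u)\neq 0$, this forces $s(y)=s(v)=0$. Second, the contradiction is then a pure degree count, not a layer count: $\deg(yv)=\deg(y)+\deg(v)\geq 2>1$ rules out $\lambda^m+\layer{b}{0}\lambda+c$, and $\deg(ru)-\deg(yv)=(\deg r-\deg y)+(\deg u-\deg v)\geq 2>1$ rules out $\lambda^{k+1}+\layer{b}{0}\lambda^k+c$. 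You also wave away the cases where one factor is primary at a single corner root, or both are primary at different roots; the paper treats these explicitly and dispatches them first by showing the product is then regular or contains a never-essential cross-term with non-zero layer. As written, the proposal identifies the difficulty but does not resolve it, and the specific resolution it gestures at rests on a false claim about where the offending cross-terms are essential.
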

\begin{proof}
Since a polynomial of this form has two corner roots, there are three types of polynomials that can factor it: Primaries in the first corner root, primaries in the second corner root, and polynomials with both of these roots. We will try all of these options to determine when these polynomials are irreducible.\\

First, we will try to multiply two primary polynomials, one for each corner root. Since the leading monomial and the constant do not have layer zero, the leading monomial and constant of their factors also must not be of layer zero. Therefore any such primaries are regular (since quasi-essential monomials of ghost layer zero are deleted). Clearly, this product cannot yield a polynomial of the form \ref{basici} (since the product is regular as well).\\

Next, we try to multiply a primary polynomial in the small root, with a polynomial having two corner roots. It is easier to look first at the essential monomials only, so we will assume $g$ and $h$ have no monomials which are never essential. Define $g=r+w$, $h=t+u+v$ where $r,w,t,u,v$ are monomials. Then $gh=tr+ur+uw+rv+vw$ where $uw$ and $rv$ are never essential. In order for the result to be of the desired form, $s(uw)$ must be zero, and $s(r),s(w),s(t),s(v)$ must not be zero; therefore $s(u)=0$. We remain with the monomial $rv$ which is never essential, and therefore we have failed to achieve the desired form.\\

We get a similar result by multiplying a primary in the big root together with a polynomial with both corner roots. $g=r+w$, $h=t+u+v$, $gh=tr+tw+ru+uw+vw$ with $tw$ and $ru$ never essential. We must have $s(uw)=0,s(w)\neq 0 \Rightarrow s(u)=0$, and $s(r),s(t),s(v)\neq 0$. We again get a polynomial with a monomial which is never essential.\\

Now we multiply two polynomials both having two corner roots. $g=r+y+t$, $h=u+v+w$, $gh=ru+rv+uy+yv+bw+tv+tw$ with the essentials $ru,yv,tw$. We do not want any monomials which are never essential, so we have $s(rv),s(uy),s(yw),s(tv)=0$. Clearly $s(r),s(u)\neq 0$, and so $s(y),s(v)=0$. Finally we obtain $gh=ru+yv+tw$. This is the only way a polynomial of the above form will factor, and when this factorization is impossible then the polynomial is irreducible.\\

Since our $g$ and $h$ each have three monomials, then $y$ and $v$ must have degree at least $1$ in $\lambda$. Thus $f=gh=ru+yv+tw$ must have $\deg(yv)\geq 2$, and any polynomial of the form
$$\lambda^m +\layer{b}{0}\lambda+c$$
which is not primary is irreducible. Moreover, since $\deg(r) > \deg(y)$ and $\deg(u) > \deg(v)$, then $\deg(ru) \geq \deg(yv)+2$. Thus the second type of irreducible non-regular polynomials is
$$\lambda^{k+1} + \layer{b}{0}\lambda^k+c.$$
\end{proof}
We will later see that these are the only examples of irreducible polynomials which are not primary.\\

\subsubsection{Counterexamples}
We provide a few counterexamples to unique factorization for non-regular polynomials. We will start with the trivial case, and finish with more complex cases.\\

We have already seen that a product of irreducible non-regular polynomials is a product of pairs of matching monomials. Changing the layers of matching monomials, we can produce different factorizations:
$$(\lambda^2 + \layer{1}{0}\lambda + \layer{0}{1/3})(\lambda^2 + \layer{1}{0}\lambda + \layer{0}{3})=(\lambda^4 + \layer{2}{0}\lambda^2 + 0)= (\lambda^2 + \layer{1}{0}\lambda + 0)^2.$$

Next, we notice that layer zero coefficients eliminate quasi-essential monomials. We will build an example with different quasi-essential monomials which disappear. We will multiply an irreducible polynomial, with a primary polynomial at a corner root which is between the big and small root of the irreducible one. We will first study the general case, and then bring a concrete example.\\

Take $g=a+\layer{b}{0}+c$ with big root $a_1$ and small root $a_2$, and take $h=d+e+f$ primary at $a_3$ so that $a_1>a_3>a_2$. Think of $e$ as any number of quasi-essential monomials (including none). Then $f=gh=ad + \layer{0}{0}(bd+be+bf) + cf$ where $be$ is quasi-essential of layer zero and so is inessential.\\

A concrete example: $g=(\lambda^2 + \layer{1}{0}\lambda + 0)$ with corner roots $-1,1$, and a primary at $0$: $h=\lambda^2+\lambda+0$. Then $f=gh=\lambda^4+\layer{1}{0}\lambda^3 + \layer{0}{0}1\lambda+0$. The monomial $1\lambda^2$ would be quasi-essential at $0$, but the layer zero makes it inessential. If instead $h=\lambda^2+0$, the product $f=gh$ remains the same.\\

We have seen how to factor a non-regular polynomial with two corner roots. We will now give an example of different ways to factor the same polynomial.\\
\begin{exmp}
Take $f=(-2)\lambda^6 + \layer{\lambda^4}{0}+0$ with corner roots $1$ and $0$. Then
$$f=((-1)\lambda^3 + \layer{\lambda^2}{0}+0)^2$$ but also
$$f=((-1)\lambda^2 + \layer{\lambda}{0}+0)((-1)\lambda^4 + \layer{\lambda^3}{0}+0).$$
\end{exmp}

So far we have seen various examples of non-unique factorizations of polynomials with the same corner roots. Now we will show an example of factorization into polynomials with different corner roots.
\begin{exmp}
$$f=1\lambda^8 + \layer{4}{0}\lambda^5+\layer{4}{0}\lambda^4 + 0.$$
This polynomial has corner roots $\{-1,0,1\}$. We factor it into two factors, the first with corner roots $\{-1,1\}$ and the second with corner roots either $\{-1,0\}$ or $\{0,1\}$.
$$f=(\lambda^6+\layer{3}{0}\lambda^3+0)(1\lambda^2+\layer{1}{0}\lambda+0)$$
$$f=(2\lambda^6+\layer{4}{0}\lambda^4+0)((-1)\lambda^2+\layer{\lambda}{0}+0).$$
These polynomial are not all irreducible, but they factor into polynomials having the same corner roots, which differ in the two cases.\\
\end{exmp}

\subsection{Irreducible polynomials}
We have introduced the basic irreducible polynomial, but we have yet to prove that it is the only non-regular irreducible polynomial. We will answer this question now.

\begin{thm}
The only irreducible non-primary polynomials are the basic irreducible polynomials.
\end{thm}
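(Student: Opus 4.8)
\section*{Proof proposal}

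The plan is to show that an irreducible polynomial $f$ which is not primary must have exactly three monomials: a leading and a constant monomial, both of nonzero layer, and a single layer--zero monomial between them. This is precisely the shape \ref{basici}, and the constraint $k=1$ or $k=m-1$ is then supplied by the previous lemma, so $f$ is a basic irreducible polynomial. I would establish this through a chain of reductions.

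First reduce to the \emph{non-regular} case. A polynomial with at most one corner root has all of its monomials essential or quasi-essential at that single point, hence is primary there; so a non-primary $f$ has at least two corner roots, and by the main theorem (a regular polynomial factors into primary polynomials at its corner roots, possibly times a power of $\lambda$) such an $f$, if regular, would factor nontrivially. Hence $f$ is non-regular. Next, after dividing out the highest power of $\lambda$ dividing $f$ --- which is forced to be $\lambda^{0}$, since $\lambda$ is a non-unit while a scalar multiple of $\lambda$ is primary --- I may assume $f$ has a nonzero constant term. Now the earlier factorization lemma for an essential monomial $h$ with $s(h)\neq 0$ and distinct big and small roots applies: since $f$ is irreducible, $f$ has no such monomial, so every essential monomial of $f$ with nonzero layer is quasi-essential at only one corner root, i.e.\ is the leading monomial or the constant monomial. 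Equivalently, every middle essential monomial of $f$ has layer zero.

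The core of the argument is to rule out everything beyond the three-monomial form. I would first dispose of the cases where the leading monomial or the constant of $f$ has layer zero: writing $f=\layer{\lambda^{n}}{0}+g$ (and dually for the constant), one peels off a factor of the type $(\layer{\lambda}{0}+a)$ (resp.\ $(\lambda+\layer{a}{0})$), by an argument along the lines of the proof of unique factorization for primary polynomials, producing a nontrivial factorization and contradicting irreducibility. So both the leading monomial and the constant of $f$ have nonzero layer. It then remains to show $f$ has at most one middle monomial. If $f$ had a third corner root, it would carry two consecutive middle essential monomials --- both of layer zero --- meeting at an intermediate corner root $x_{i}$; and if $f$ had a quasi-essential-only monomial, that monomial would have nonzero layer (a layer-zero quasi-essential monomial is inessential and absent) and be quasi-essential at some corner root $x_{i}$. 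In either case I would factor $f$ by splitting off a primary polynomial supported at $x_{i}$: the essential part of $f$ at $x_{i}$ is primary at $x_{i}$ and, up to a monomial factor, is a power of $(\lambda+\layer{x_{i}}{0})$ times a regular primary polynomial $R$; since $x_{i}$ is a genuine corner root, $R$ has at least two monomials, hence is a non-unit, and one checks that $R$ divides $f$, contradicting irreducibility. Thus $f$ has exactly two corner roots and exactly one middle monomial, of layer zero, so $f=\lambda^{m}+\layer{b}{0}\lambda^{k}+c$ is of the form \ref{basici}; the previous lemma then forces $k=1$ or $k=m-1$.

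The step I expect to be the main obstacle is the last one: verifying that the regular primary factor $R$ at an intermediate corner root (or at the corner root of a stray quasi-essential monomial) genuinely divides $f$. The subtlety is that, because the leading and constant monomials of $f$ carry nonzero layers while the middle monomials carry layer zero, one cannot invoke the primary--factorization machinery as freely as for regular polynomials; one has to track carefully which monomials of the candidate product $R\cdot(f/R)$ are essential, quasi-essential, or inessential at each point --- essentially performing, in general form, the same bookkeeping illustrated by the explicit factorizations in the counterexamples subsection.
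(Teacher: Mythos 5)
Your high-level reductions are aligned with the paper: reduce to the non-regular case, and use the earlier big-root/small-root lemma to conclude that every middle essential monomial of an irreducible non-primary $f$ has layer zero. From there, though, your argument and the paper's diverge, and the step you flag as the main obstacle is exactly where your approach runs into real trouble.

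The paper's proof proceeds by \emph{explicit construction} of a nontrivial factor in each of the two remaining cases. Case one: $f$ has a quasi-essential-only monomial of nonzero layer at some corner root $a$, and the paper writes out a factorization splitting $f$ at $a$. Case two: $f$ has three or more corner roots and all of its monomials are essential; here the paper splits off the binomial $(b_2 b_3^{-1}\lambda^{m_2-m_3}+0)$, where $b_2\lambda^{m_2}$ and $b_3\lambda^{m_3}$ are the two bottom middle essential monomials. The crucial point is that this binomial has the same corner root as the essential part of $f$ at $a_2$, but its coefficients are \emph{tangible}, not layer zero. This choice sidesteps the difficulty that you identify.

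Your proposal instead tries to split off ``the essential part of $f$ at an intermediate corner root $x_i$,'' rewritten as a power of $(\lambda+\layer{x_i}{0})$ times a regular primary polynomial $R$ with $R$ a non-unit. This does not work in the generic case. When the only two monomials not inessential at an intermediate corner root $x_i$ are the two flanking middle essential monomials, both have layer zero, so the essential part is $\layer{b}{0}\lambda^{j}+\layer{c}{0}\lambda^{i}=\lambda^{i}\bigl(\layer{b}{0}\lambda^{j-i}+\layer{c}{0}\bigr)$. This binomial is not a power of $(\lambda+\layer{x_i}{0})$ times anything regular, nor can it divide $f$ when $f$'s leading coefficient has nonzero layer (the product would have a layer-zero leading term). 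So there is no ``regular primary $R$ with at least two monomials'' to extract from the essential part itself. The missing idea is to \emph{replace} the layer-zero binomial by a root-equivalent regular binomial --- the paper's $(b_2 b_3^{-1}\lambda^{m_2-m_3}+0)$ --- and then verify by direct multiplication that the complementary factor works out, with the inessential cross-terms disappearing; this is the bookkeeping the paper actually performs and which your sketch does not supply.

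A smaller point: your step that rules out layer-zero leading or constant monomials by peeling off $(\layer{\lambda}{0}+a)$ is modeled on the primary-polynomial argument, but in the non-primary case there is an intermediate corner root between the leading monomial's corner root and $a$, and the primary-peeling argument does not transfer automatically. The paper avoids needing this step altogether: it remarks at the end of the proof that its Case two factorization already ``holds for polynomials with leading monomial and/or constant of layer zero,'' so no separate treatment of those subcases is required.
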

\begin{proof}
As we have seen before, any polynomial with a middle essential monomial with layer different from zero is reducible. Therefore we are only interested in polynomials which are not primary, having middle essential monomials of layer zero.\\

We start with the case of a polynomial with more than one corner root (since it is not primary) with quasi-essential monomials of layer different from zero at some corner root $a$. The polynomial is of the form $f=a_n\lambda^n+...+\layer{a_k}{0}\lambda^k + a_m\lambda^m +...+a_0$, where $\layer{0}{0}a_k\lambda^k + a_m\lambda^m$ have the same tangible value at $a$. We claim that
$$f=(a_m\lambda^m +...+a_0)a_m^{-1}(a_n\lambda^{n-m}+...+\layer{a_k}{0}\lambda^{k-m}+a_m).$$
It is easy to see that the corner roots of the left polynomial are the corner roots of $f$ which are less than or equal to $a$, and the right polynomial has all of the corner roots of $f$ which are larger or equal to $a$. When $\lambda > a$, the essential part is
$$(a_m\lambda^m)a_m^{-1}(a_n\lambda^{n-m}+...+\layer{a_k}{0}\lambda^{k-m})=a_n\lambda^n+...+\layer{a_k}{0}\lambda^k.$$
When $\lambda \leq a$ the essential part is
$$(a_m\lambda^m +...+a_0)a_m^{-1}(a_m)=a_m\lambda^m +...+a_0.$$
(Note that the quasi-essential monomials at $a$ multiplied by layer zero are inessential.) Thus we have proved that $f$ factors into the above polynomials.\\

We are left with the case of polynomials with three or more corner roots and only essential monomials:
$$f=g+b_1\lambda^{m_1}+b_2\lambda^{m_2}+b_3\lambda^{m_3}+b_4,$$
where $g=\lambda^{m_1+1}g'$ for some polynomial $g'$. Let $a_n>a_{n-1}>...>a_1$ be the corner roots of $f$, $n \geq 3$. We claim that
$$f=(b_2^{-1}b_3\lambda^{m_3-m_2}(g+b_1\lambda^{m_1})+b_3\lambda^{m_3}+b_4)(b_2b_3^{-1}\lambda^{m_2-m_3}+0).$$
The corner root of the second polynomial is $a_2$. Indeed, if $t(b_2x^{m_2})=t(b_3x^{m_3})$ then $t(b_2b_3^{-1}x^{m_2-m_3})=0$. It is easy to verify that the corner roots of the right polynomial are $a_n,a_{n-1},...,a_3,a_1$. The only non-trivial case is of $a_3$: $$b_2^{-1}b_3\lambda^{m_3-m_2}b_1\lambda^{m_1}+b_3\lambda^{m_3}=b_3\lambda^{m_3}(b_2^{-1}b_1\lambda^{m_1-m_2}+0).$$
Therefore the product indeed equals $f$.

Note that this holds for polynomials with leading monomial and/or constant of layer zero.
\end{proof}

\subsection{Summary of the factorization process}
In this section we describe the factorization process for a general polynomial, and then give an example.\\

The first step is partitioning the polynomial around its essential middle monomials of layer different from zero. Afterwards, we factor the polynomial until we get to an irreducible factor, or to a primary polynomial. We factor the primary polynomials so they do not have layer zero leading monomials or constants. Finally, we factor the regular primary part in the same way polynomials factor over $\mathbb{Q}$.\\

\subsubsection{Example}
Consider the polynomial:
$$f=\layer{(-10)}{0}\lambda^{10}+(-4)\lambda^8 + (-1)\lambda^7+\layer{3}{0}\lambda^5+ 5\lambda^3+\layer{5}{-1}.$$

First we see that $\lambda=0$ is the smallest corner root since it is the solution of the equation $5\lambda^3+\layer{5}{-1}$. $5\lambda^3$ is the essential monomial with two distinct big and small roots, and thus $f$ factors into
$$f=(\layer{(-15)}{0}\lambda^{7}+(-9)\lambda^5 + (-6)\lambda^4+\layer{(-2)}{0}\lambda^2+0)(5\lambda^3+\layer{5}{-1})=$$
$$=(-10)(\layer{\lambda^{7}}{0}+6\lambda^5 + 9\lambda^4+\layer{13}{0}\lambda^2+15)(\lambda^3+\layer{0}{-1}).$$

Now the smallest corner root of $\layer{\lambda^{7}}{0}+6\lambda^5 + 9\lambda^4+\layer{13}{0}\lambda^2+15$ is $\lambda=1$. The next essential monomial is $\layer{13}{0}\lambda^2$, which has layer zero and therefore we will factor it later to basic irreducible factors. The next corner root is $\lambda=2$, and the essential monomial is $9\lambda^4$. Therefore $f$ factors further into
$$f=(-10)(\layer{(-9)}{-1}\lambda^{3}+(-3)\lambda + 0)(9\lambda^4+\layer{13}{0}\lambda^2+15)(\lambda^3+\layer{0}{-1})=$$
$$=(-10)(\layer{\lambda^{3}}{0}+6\lambda + 9)(\lambda^4+\layer{4}{0}\lambda^2+6)(\lambda^3+\layer{0}{-1}).$$

We obtained three factors -- primary at $3$, the basic irreducible factor at $2$ and $1$, and a primary at $0$. We further factor each of these polynomials: $$\layer{\lambda^{3}}{0}+6\lambda + 9=(\layer{\lambda}{0}+3)^2(\lambda + 3)$$
$$\lambda^4+\layer{4}{0}\lambda^2+6=(\lambda^2+\layer{2}{0}\lambda+3)^2$$
$$\lambda^3+\layer{0}{-1} = (\lambda + \layer{0}{-1})(\lambda^2+\lambda+0).$$
Thus:
$$f=(-10)(\layer{\lambda}{0}+3)^2(\lambda + 3)(\lambda^2+\layer{2}{0}\lambda+3)^2(\lambda + \layer{0}{-1})(\lambda^2+\lambda+0).$$

\section{Several Variables}

\subsection{Unique factorization of primary polynomials}

Since changing the layer of the primary point $a$ does not change its being a primary point, we will assume from now on that the primary point is tangible.\\

\begin{thm}
Let $f$ be a regular primary polynomial at $a$. Then $f$ factors uniquely into irreducible factors.\\
\end{thm}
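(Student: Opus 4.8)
The plan is to reduce the multivariate primary case to the univariate one already proved, by exploiting the map $\psi$ together with a suitable specialization of variables. First I would observe that, exactly as in the univariate case, a regular primary polynomial at the tangible point $a$ has the form $f = c\sum_{I} \layer{0}{b_I} a_I \lambda^I$ with all $b_I \neq 0$, and that the analogue of the isomorphism lemma holds: the map $\psi$ sending $f$ to $\sum_I s(b_I) x^I \in \mathbb{Q}^+[\lambda_1,\dots,\lambda_n]$ (after normalizing so the tangible constant is trivial) is additive and multiplicative on $\mathbb{P}_a$, and is injective on \emph{regular} primary polynomials up to a tangible constant, since knowing all the layers $s(b_I)$ together with primality at $a$ pins down each monomial. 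The key new point over several variables is that $\mathbb{Q}[\lambda_1,\dots,\lambda_n]$ is itself a UFD, so once the correspondence is set up the factorization statement should transfer.

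Next I would make the transfer precise. Given a factorization $f = g h$ of ELT primary polynomials, Corollary \ref{regfactor} guarantees $g$ and $h$ are regular, hence primary at $a$ with nonzero layers, so $\psi$ applies to each and $\psi(f) = \psi(g)\psi(h)$ in $\mathbb{Q}[\lambda_1,\dots,\lambda_n]$. Conversely, given any factorization $\psi(f) = PQ$ in $\mathbb{Q}[\lambda_1,\dots,\lambda_n]$ into polynomials with positive (nonzero) coefficients, I would lift $P$ and $Q$ back to primary polynomials at $a$ by reading off exponents and assigning the coefficient of $\lambda^I$ to be $\layer{0}{P_I}a_I$ (resp. for $Q$), checking that the ELT product of these lifts is $f$ up to a tangible constant — this uses that in a primary polynomial multiplication is "classical on the layers" because all monomials sit at the same tangible value, so no inessential-monomial deletion occurs. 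Irreducibility corresponds on both sides: an ELT regular primary polynomial is irreducible iff its $\psi$-image is irreducible in $\mathbb{Q}[\lambda_1,\dots,\lambda_n]$, because a nontrivial ELT factorization gives a nontrivial polynomial factorization and vice versa (a constant ELT factor corresponds to a unit, i.e. a positive rational, on the other side). Unique factorization of $f$ into ELT irreducibles then follows from unique factorization of $\psi(f)$ in the UFD $\mathbb{Q}[\lambda_1,\dots,\lambda_n]$, the ambiguity by tangible constants matching the ambiguity by units.

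One subtlety to handle carefully is that $\psi$ lands in polynomials over $\mathbb{Q}$, not $\mathbb{Q}^+$, yet the layers $s(b_I)$ of a genuine ELT polynomial are positive; the point is that when we factor $\psi(f)$ in $\mathbb{Q}[\lambda_1,\dots,\lambda_n]$ the factors need not have positive coefficients, but the Vishne-type phenomenon from the univariate $\mathbb{Q}^+$ discussion does not obstruct uniqueness here precisely because we are now allowed the full layer group $\mathbb{Q}$, so every factor of $\psi(f)$ — positive-coefficient or not — does lift to a legitimate regular primary ELT polynomial. I would state this as a lemma: if $\psi(f)$ has positive coefficients and $\psi(f) = PQ$ in $\mathbb{Q}[\lambda_1,\dots,\lambda_n]$, then $P$ and $Q$ also have positive coefficients (this is false in general for polynomials, so in fact the correct statement is the reverse — we simply allow $P,Q$ with arbitrary rational coefficients and lift them all, using $L = \mathbb{Q}$). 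Pinning down this lifting correspondence cleanly, and verifying that ELT multiplication of the lifts reproduces classical polynomial multiplication coefficient-by-coefficient with no deletions, is the main obstacle; everything else is bookkeeping inherited from the univariate proof and from $\mathbb{Q}[\lambda_1,\dots,\lambda_n]$ being a UFD.
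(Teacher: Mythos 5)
Your proposal follows essentially the paper's route: establish that $\psi : \mathbb{P}_a \to \mathbb{Q}[\lambda_1,\dots,\lambda_n]$ is a multiplicative isomorphism (the paper bakes the ``up to a tangible constant'' normalization into the definition of $\mathbb{P}_a$ by requiring $t(f(a))=0$), observe that factors of a regular primary polynomial are again regular and primary at $a$ (via Corollary~\ref{regfactor} together with an inessentiality argument), and then transfer unique factorization from the UFD $\mathbb{Q}[\lambda_1,\dots,\lambda_n]$.

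Two of your side remarks are slightly off, though neither derails the plan. First, you claim that when multiplying two primary polynomials ``no inessential-monomial deletion occurs''; this is false. For example $(\lambda + 0)(\lambda + \layer{0}{-1})$ expands to $\lambda^2 + \layer{0}{0}\lambda + \layer{0}{-1}$ before simplification, and the middle monomial, being quasi-essential at $0$ with layer zero, is inessential and must be struck. What makes $\psi$ a homomorphism anyway (as the paper observes) is that a deleted layer-zero quasi-essential monomial corresponds precisely to a zero coefficient on the $\mathbb{Q}$-side, so the identity $\psi(fg)=\psi(f)\psi(g)$ holds termwise regardless. Second, your positivity worry is moot once $L=\mathbb{Q}$: a regular primary polynomial can have layers $s(b_I)$ equal to any nonzero rational, so $\psi$ maps $\mathbb{P}_a$ onto all of $\mathbb{Q}[\lambda_1,\dots,\lambda_n]$ and there is no $\mathbb{Q}^+$ constraint to route around; the Vishne-type counterexample belongs to the earlier $L=\mathbb{Q}^+$ section and simply does not arise in the present setting.
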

\begin{proof}
The key to this proof is to build a homomorphism between primary polynomials at a given point and polynomials over $\mathbb{Q}$. First we need to verify that a primary polynomial factors into primary polynomials. Let $g$ and $h$ be polynomials so that $f=gh$. Assume $g$ has a monomial $u$ which is inessential at $a$, and let $v$ be any monomial of $h$. Then $uv$ is inessential at $a$. However, $u$ must not be inessential so $us$ must be at least quasi-essential for some $s$ in $h$. Thus $us$ is a monomial of $f$ which is not quasi-essential at $a$, which contradicts the definition of a primary polynomial.\\

We define $\mathbb{P}_a$ to be the set of regular polynomials $f$ which are primary in $a$ and for which the tangible value of $f(a)$ is 0. The latter means that the constant $c$ in the form $f=c\sum \layer{0}{b_I}a_I\lambda^I$ is $0$. We will show that $\mathbb{P}_a$ is closed under multiplication, and also under factorization.\\

Assume $g,h \in \mathbb{P}_a$. Clearly, any monomial $u$ of $f=gh$ is the sum of products $u=g_1h_1+...+g_kh_k$ where $g_i$ are monomials of $g$, and $h_i$ are monomials of $h$. Since $g$ and $h$ are primary in $a$ and their tangible value is $0$, the tangible value of $u(a)$ is also $0$. Due to Corollary \ref{regfactor}, $f$ is regular. Thus $f$ is primary in $a$ and the tangible value of $f(a)$ is $0$, and in other words $f \in \mathbb{P}_a$.\\

Assume $f \in \mathbb{P}_a$, and assume $f$ factor into $f=gh$. Due to Corollary \ref{regfactor}, $g$ and $h$ are regular.  Assume $g$ has a monomial $u$ which is inessential at $a$. $u$ must not be completely inessential, so $u$ must be at least quasi-essential for some point $b$. Let $s$ be a monomial which is not inessential in $h$ at $b$. Thus $us$ is a monomial of $f$ which is not inessential at $b$. However, since $u$ in inessential at $a$ then $us$ is inessential at $a$; thus $us$ is a monomial of $f$ which is not quasi-essential at $a$, and that is absurd. Therefore both $g$ and $h$ are primary at $a$. The tangible value of $f(a)$ is 0, so the product of the tangible values of $g(a)h(a)$ must be 0 as well. Assume that the tangible value of $g(a)$ is $c$; then $f=gh=c^{-1}cgh=c^{-1}gch$. The tangible value of $c^{-1}g(a)$ is clearly 0, and since the tangible value of $f$ is $0$ then so is the tangible value of  $ch(a)$. Thus $c^{-1}g,ch \in \mathbb{P}_a$.\\

Define $\psi : \mathbb{P}_a \rightarrow \mathbb{Q}[x_1,...,x_n]$, by sending $f=\sum \layer{0}{b_I}a_I\lambda^I$ to $\psi(f)=\sum b_Ix^I$.\\

We wish to prove that $\psi(fg)=\psi(f)\psi(g)$. For all $f,g \in \mathbb{P}_a$ we know we can write the polynomials in the form $f=\sum \layer{0}{b_I}a_I\lambda^I$ and
$g=\sum \layer{0}{d_I}a_I\lambda^I$. We also know that the product $fg$ is also in $\mathbb{P}_a$, and thus $fg=\sum \layer{0}{e_I}a_I\lambda^I$. Define $e_I$ to be the sum of products of the form $b_Jd_K$ such that $J+K=I$. If $e_I=0$ then we already proved that the monomial $\layer{0}{0}a_I\lambda^I$ is inessential and should be deleted. $\psi(fg)=\sum e_Ix^I$. Now, $\psi(f)\psi(g) = (\sum b_Ix^I)(\sum d_Ix^I)=\sum e_Ix^I$, as desired.\\

Having seen that $\psi$ is an homomorphism, we now prove it is an isomorphism. Clearly $\psi$ is onto, and we will prove it is also injective. Assume $\psi(f)=\psi(g)=\sum b_Ix^I$. Then $f$ and $g$ both must equal $\sum \layer{0}{b_I}a_I\lambda^I$. Note that this is mainly due to the fact that $f$ and $g$ are regular.\\

Since $\psi$ is a multiplicative isomorphism, and since $\mathbb{Q}[x_1,...,x_n]$ has unique factorization, $\mathbb{P}_a$ has unique factorization as well. Now, let $f$ be any primary polynomial at $a$. If $c$ is the tangible value of $f(a)$, then $c^{-1}f \in \mathbb{P}_a$; $c^{-1}f$ factors uniquely up to multiplication by a constant and clearly so does $f$ as desired.
\end{proof}

\begin{exmp}
$\lambda_1^2 + \lambda_1\lambda_2 + \lambda_2^2$ is irreducible, like $x^2 + xy + y^2$. However,
$$\lambda_1^2 + \layer{\lambda_1\lambda_2}{2} + \lambda_2^2 = (\lambda_1+\lambda_2)^2,$$
as $x^2+2xy+y^2=(x+y)^2$.
\end{exmp}

\subsection{Non-primary polynomials}
Primary polynomials play an important role in our theory. Recall that the essential part at a certain point is a primary polynomial. Thus, at any corner root, the essential part factors uniquely as a polynomial. One might think this will lead to a proof of unique factorization of general polynomials. However this idea fails, as we will now show.\\

Let us focus on polynomials in two variables. The corner root set consists of line segments, and the points where these segments intersect. As we have seen, the monomials which are quasi-essential on the line segment are quasi-essential at the intersection.\\

Assume that the quasi-essential monomials at point $a$ factor into $h_1h_2$, at point $b$ the quasi-essential monomials factor into $g_1g_2$ and at the line segment between $a,b$ the quasi-essential monomials factor into $r_1r_2$. Clearly, $r_1r_2$ are obtained by deleting inessential monomials from $h_1h_2$ and $g_1g_2$. Considering $r_1$ as a part of $h_1$ and $g_1$, one can reconstruct part of the factors of the polynomial by identifying $g_1$ with $h_1$. However, consider the case that $r_1=r_2$. One may identify $g_1$ with $h_2$ and $g_2$ with $h_1$ or $g_i$ with $h_i$, for $i=1,2$. As we will see in the next example, this provides a counterexample to unique factorization.

\begin{exmp}
$$f=f_1f_2=(\lambda_2+\lambda_1+\lambda_1^2+(-1)\lambda_1^3)(\lambda_2+0+\lambda_1^2+(-2)\lambda_1^4)$$
$$g=g_1g_2=(\lambda_2+\lambda_1+\lambda_1^2+(-2)\lambda_1^4)(\lambda_2+0+\lambda_1^2+(-1)\lambda_1^3).$$

We will prove that $f_1,f_2,g_1,g_2$ are irreducible and that $f=g$, and thus unique factorization fails.
\end{exmp}

In the above notation, the points $a,b$ are $a=(0,0),b=(1,2)$. For both $g$ and $f$ at $a$, the quasi-essential monomials are $h_1h_2=(\lambda_2+\lambda_1+\lambda_1^2)(\lambda_2+0+\lambda_1^2)$. At $b$, $g_1g_2=(\lambda_1+\lambda_1^2+(-1)\lambda_1^3)(0+\lambda_1^2+(-2)\lambda_1^4)$. At the line between $a$ and $b$, the quasi-essential monomials are $r_1r_2=(\lambda_2+\lambda_1^2)^2$. The reconstruction of the irreducible factors of the polynomial can yield either $f$ or $g$. Next we will prove that this is a good example (i.e., $f=g$).\\

Recall that two regular polynomials are identical if they are root-equivalent. Therefore, it is enough to show that $f=g$ on the corner roots of $f$ and $g$ in order to prove that $f=g$ everywhere. The corner root set of $f_1f_2$ is the union of the corner root set of $f_1$ and $f_2$. In the following figures, we can see the corner roots of $f_1,g_1$ (solid) and of $f_2,g_2$ (dashed).\\

\includegraphics{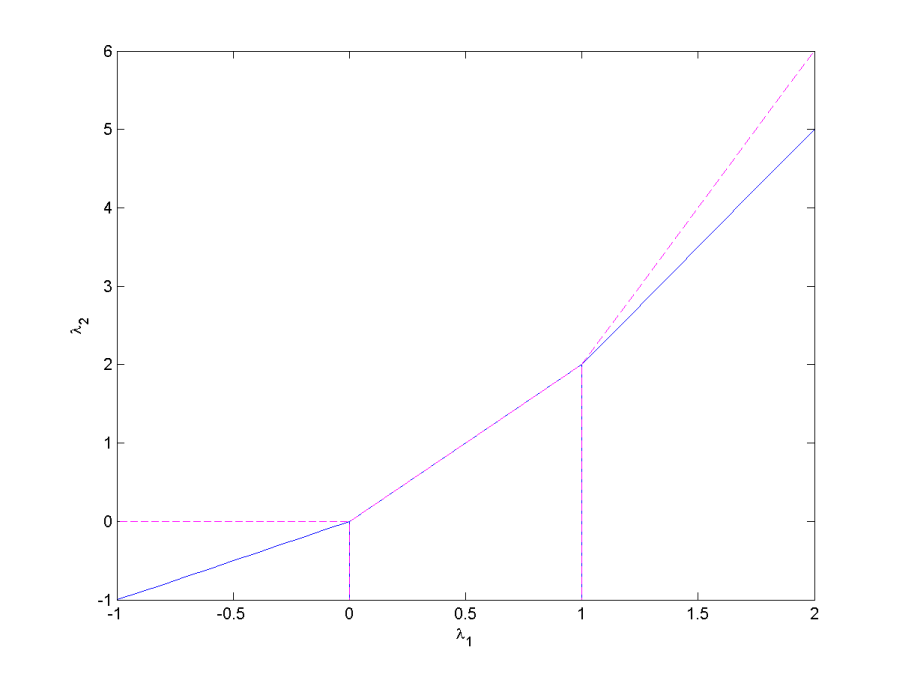}

\includegraphics{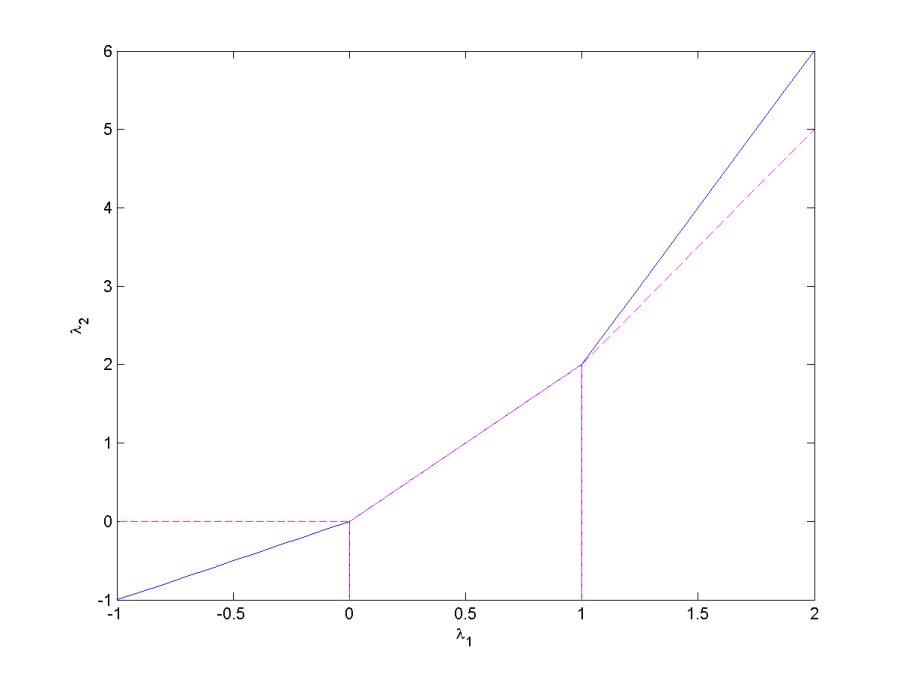}

As we can see in the following table in detail, both $f$ and $g$ have the same quasi-essential monomials at the corner roots set, as desired.\\

\begin{tabular}{|l|l|}
  \hline
  corner root set & quasi-essential monomials \\ \hline
  $\{\lambda_1=\lambda_2<0\}$ & $\lambda_2+\lambda_1$ \\ \hline
  $\{\lambda_2=0,\lambda_1<0\}$ & $\lambda_2(\lambda_2+0)$ \\ \hline
  $\{\lambda_1=\lambda_2=0\}$ & $(\lambda_2+\lambda_1+\lambda_1^2)(\lambda_2+0+\lambda_1^2)$ \\ \hline
  $\{\lambda_1=0,\lambda_2<0\}$ & $(\lambda_1+\lambda_1^2)(0+\lambda_1)$ \\ \hline
  $\{\lambda_1^2=\lambda_2, 0<\lambda_1<1\}$ & $(\lambda_2+\lambda_1^2)(\lambda_2+\lambda_1^2)$ \\ \hline
  $\{\lambda_1=1,\lambda_2=2\}$ & $(\lambda_2+\lambda_1^2+(-1)\lambda_1^3)(\lambda_2+\lambda_1^2+(-2)\lambda_1^4)$ \\ \hline
  $\{\lambda_1=1,\lambda_2<2\}$ & $(\lambda_1^2+(-1)\lambda_1^3)(\lambda_1^2+(-2)\lambda_1^4)$ \\ \hline
  $\{(-1)\lambda_1^3=\lambda_2, 1<\lambda_1\}$ & $(\lambda_2+(-1)\lambda_1^3)((-2)\lambda_1^4)$ \\ \hline
  $\{(-2)\lambda_1^4=\lambda_2, 1<\lambda_1\}$ & $(\lambda_2+(-2)\lambda_1^4)(\lambda_2)$ \\ \hline
\end{tabular}\\

Next we will prove that $f_1,f_2,g_1,g_2$ are irreducible.

\begin{lem}
Let $f$ be a polynomial of the form $f=\lambda_2+g(\lambda_1)$ where $g$ is a polynomial in one variable, then $f$ is irreducible.
\end{lem}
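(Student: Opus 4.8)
The plan is to show that no nontrivial factorization $f = gh$ can exist by analyzing the degree in $\lambda_2$. Since $f = \lambda_2 + g(\lambda_1)$ is linear in $\lambda_2$, any factor of $f$ can have $\lambda_2$-degree at most $1$, so in a factorization $f=gh$ exactly one factor, say $h$, contains $\lambda_2$ (with degree $1$) and the other, $g$, is a polynomial in $\lambda_1$ alone. The first thing I would do is make this precise in the ELT setting: because multiplication of ELT polynomials is ordinary polynomial multiplication followed by deletion of inessential monomials, the $\lambda_2$-degree of $f$ is at most the sum of the $\lambda_2$-degrees of $g$ and $h$, and the monomial of highest $\lambda_2$-degree cannot be cancelled (it is essential for $\lambda_2$ large), so the degrees add exactly. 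Hence $g \in R[\lambda_1]$ and $h = \layer{0}{c}\lambda_2 + h_0(\lambda_1)$ for some tangible-or-otherwise coefficient and some $h_0 \in R[\lambda_1]$.

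Next I would extract the constraint coming from matching monomials. Writing $f = gh$ and comparing the $\lambda_2^1$ part and the $\lambda_2^0$ part, the $\lambda_2^1$ part of $gh$ is (the non-inessential monomials of) $g(\lambda_1)\cdot \layer{0}{c}\lambda_2$, which must equal $\lambda_2$; and the $\lambda_2^0$ part is (the non-inessential monomials of) $g(\lambda_1) h_0(\lambda_1)$, which must equal $g(\lambda_1)$ in the statement's notation — here one must be careful, since the two $g$'s clash; let me instead call the given polynomial $f = \lambda_2 + p(\lambda_1)$. So comparing, $\layer{0}{c}g(\lambda_1)\cdot\lambda_2 = \lambda_2$ as functions, forcing $g$ to behave like a tangible constant: more precisely, the $\lambda_2$-coefficient monomial of $h$ times $g$ must reduce to a single monomial $\lambda_2$, which (since no monomials of $g\cdot\layer{0}{c}\lambda_2$ beyond the constant-in-$\lambda_1$ one can survive) forces $g$ to be a monomial $\layer{\alpha}{b}$ in $R$, i.e. $g$ is a constant. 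That makes the factorization trivial, proving irreducibility.

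The step I expect to be the main obstacle is ruling out a degenerate scenario where $g(\lambda_1)$ is a genuine polynomial in $\lambda_1$ but the product $\layer{0}{c} g(\lambda_1)\lambda_2$ collapses to the single monomial $\lambda_2$ after deleting inessential monomials — a priori one might worry this happens if all the higher-degree monomials of $g$ were inessential in the product. I would close this by noting that the leading (highest $\lambda_1$-degree) monomial of $g$, when multiplied by $\layer{0}{c}\lambda_2$, produces a monomial of $gh$ that is essential wherever $\lambda_1$ is large and $\lambda_2$ is suitably large (it dominates every other monomial in that regime by the piecewise-linear/corner-root analysis of Lemma on monomial dominance), so it cannot be deleted; hence $g$ has $\lambda_1$-degree $0$. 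One subtlety to handle along the way is the possible presence of layer-zero coefficients in $f$ or its factors: since $f = \lambda_2 + p(\lambda_1)$ need not be regular, I would phrase the domination argument using tangible values only (corner roots), which is exactly what the earlier lemmas provide, so layer zero does not interfere with the degree bookkeeping. Once $g$ is shown to be a constant, the conclusion that $f$ is irreducible is immediate.
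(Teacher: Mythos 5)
Your argument is essentially the same as the paper's: both proofs first use $\lambda_2$-degree bookkeeping (the top $\lambda_2$-degree monomial of a product cannot be deleted since it is essential for $\lambda_2$ large), concluding that exactly one factor contains $\lambda_2$ while the other is a polynomial in $\lambda_1$ alone, and then both rule out a nonconstant $\lambda_1$-factor by producing an essential monomial of the form $\lambda_1^n\lambda_2$ with $n>0$ in the product, which $f=\lambda_2+g(\lambda_1)$ does not possess. One small imprecision in your write-up: from degree additivity alone you only get $h=q(\lambda_1)\lambda_2+h_0(\lambda_1)$, not $h=\layer{0}{c}\lambda_2+h_0(\lambda_1)$; that $q$ is constant requires the same leading-monomial argument you later apply to $g$ (applied now to the product $gq$, which forces $\deg_{\lambda_1}g=\deg_{\lambda_1}q=0$). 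The paper avoids this by keeping general $\lambda_2$-coefficients $h_1(\lambda_1),h_2(\lambda_1)$ in both hypothetical factors throughout; but your reasoning covers the needed case and the proof is sound.
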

\begin{proof}
Assume that $f=f_1f_2$. Since $\lambda_2$ is a monomial of $f$, without loss of generality, $f_1$ must have $c\lambda_2$ as a monomial, and $f_2$ must have $c^{-1}$ as a monomial for some constant $c$. Clearly $f_1$ and $f_2$ cannot have any monomials of the type $x\lambda_1^n \lambda_2^m$ where $m>1$, since $f$ does not have such monomials. Thus $f_1=h_1(\lambda_1)\lambda_2 + r_1(\lambda_1)$, $f_2=h_2(\lambda_1)\lambda_2 + r_2(\lambda_1)$.\\

Fix $\lambda_1=a$. For any $\lambda_2>r_i(a)h_i^{-1}(a)$, there is a monomial of the form $c\lambda_1^n\lambda_2$ of $f_i$ which is not inessential. Take $\lambda_2>r_1(a)h_1^{-1}(a)+r_2(a)h_2^{-1}(a)$ to obtain a monomial of the form $c\lambda_1^n\lambda_2^2$ of $f$ which is not inessential, in contradiction to the form of $f$.\\

Assume $h$ has a monomial of the form $c\lambda_1^n$ with $n>0$, and assume that this monomial is essential or quasi-essential at a point $b$. Take $\lambda_2>g(b)$, to obtain an essential or quasi-essential monomial of $f$ of the form $c\lambda_1^n\lambda_2$, which is absurd. Thus $f_2=c$ where $c$ is a constant.\\

Therefore $f=cf_1$ is the only factorization of $f$ and thus $f$ is irreducible, as desired.
\end{proof}

To conclude, $f_1,f_2,g_1,g_2$ are irreducible due to the lemma above and so $f_1f_2$ and $g_1g_2$ are two different factorizations of the same polynomial $f= g$.\\

Department of mathematics, Bar-Ilan university, Ramat-Gan 52900, Israel

E-mail address: erez@math.biu.ac.il

\end{document}